\documentclass[11pt]{article}

\usepackage{graphicx} 
\usepackage[pdftex]{pict2e}
\usepackage[dvipsnames]{xcolor}
\usepackage{stix}

\input{epsf.sty}
\usepackage{epsfig}
\headheight\baselineskip\headsep2\baselineskip\textwidth210mm
\advance\textwidth -2in\textheight277mm\advance\textheight-2in
\label{seq.sub.gdn}
\usepackage[utf8]{inputenc}
\usepackage{tikz}
\usepackage{makecell}
\usetikzlibrary{shapes.geometric, positioning, shapes, arrows}
\usepackage{amsfonts}
\usepackage{amsthm}
\usepackage{float}
\usepackage{amssymb}
\usepackage{graphicx}
\usepackage{amsmath}

\usepackage{lscape}
\usepackage{color}
\usepackage{multirow}
\usepackage[pdftex]{pict2e}
\usepackage[dvipsnames]{xcolor}
\usepackage{stix}

\definecolor{c30}{rgb}{0,0,1}

\newcommand{\sign}{\text{sign}}

\newtheorem{theorem}{Theorem}[section]
\newtheorem{lemma}[theorem]{Lemma}
\newtheorem{proposition}[theorem]{Proposition}
\newtheorem{corollary}[theorem]{Corollary}
\theoremstyle{definition}

\newtheorem{remark}[theorem]{Remark}
\numberwithin{equation}{section}

\usepackage{color}

\title{
On Analytically Tractable Multidimensional Diffusions via Doob $h$-Transforms, with Resetting and Applications to Wiener and Ornstein--Uhlenbeck Processes   
}

\author{
 Antonio \ {\bf Di Crescenzo}\thanks{
 Email: adicrescenzo@unisa.it \ -- \ ORCID: 0000-0003-4751-7341} 
 \qquad 
 Verdiana \ {\bf  Mustaro}\thanks{
 Email: vmustaro@unisa.it \ -- \ ORCID: 0000-0003-4583-2612} 
 \qquad
 Serena \ {\bf  Spina}\thanks{
 Email: sspina@unisa.it \ -- \ ORCID: 0000-0001-6408-7596}
 \\[4mm]
\normalsize  Dipartimento di Matematica, Universit\`a degli Studi di Salerno,\\
\normalsize Via Giovanni Paolo II, 132, 84084 Fisciano (SA), Italy
}

\begin{document}

\maketitle

\begin{abstract}
We investigate a class of drift transformations of multidimensional diffusion processes generated through Doob $h$-transforms. These transformations provide a systematic approach for constructing analytically tractable stochastic models with prescribed probabilistic and statistical properties. We derive sufficient conditions under which the transformed diffusion admits an explicit transition density expressed through a product form involving a strictly positive harmonic function. Particular choices of this function lead to mixture representations of the transition density and to bimodality. We further analyze the effects of the transformation on stochastic ordering, diffusions in potential landscapes, and Poissonian resetting dynamics. In particular, we show that the product-form relation is preserved under resetting, enabling explicit characterization of the corresponding stationary distributions. Two multidimensional examples based on Wiener and Ornstein--Uhlenbeck processes illustrate the theory, providing closed-form expressions for transition densities, weight functions, and effective potentials. The two-dimensional setting is explored in detail, including symmetry effects and absorbing boundaries.
 
\bigskip\noindent
{\em MSC}: primary 60J60; 
secondary 60J70

\bigskip\noindent
{\em Keywords}: Drift transformations; Wiener process; Ornstein--Uhlenbeck process; Transition densities; Stochastic ordering; Poissonian reset; Diffusions in potential fields; Absorbing boundaries.

\end{abstract}



\section{Introduction and background}
Transition probability density functions (p.d.f.'s) of multidimensional diffusion processes play a 
fundamental role in the stochastic modeling of dynamical systems. However, explicit closed-form expressions for transition p.d.f.'s are generally unavailable in multidimensional settings, except for a limited number of special cases. In this paper, we 
extend to the multidimensional setting the 
framework for generating new multidimensional transition p.d.f.'s induced by the Doob h-transforms. Then, we 
investigate the properties and interrelationships of the resulting diffusion processes. 

\subsection{Recent advances on multidimensional diffusion processes and transforms}
Prior to this work, several recent studies have significantly advanced the theory and applications 
of multidimensional diffusion processes, addressing a wide range of problems concerning 
theoretical, methodological, and applied aspects. 
\par
Diffusion processes provide a suitable framework for modeling complex systems 
in a variety of applications, including neural activity (see Ditlevsen and L\"ocherbach \cite{Ditlevsen2017}),  geophysics systems (cf.\ R\"ockner et al.\ \cite{RoZhuZhu}), and, more recently, 
denoising diffusion probabilistic models for generative learning (see Bre\v{s}ar and Mijatovi\'c \cite{BreMij}, for instance). 
On the theoretical side, for example, Trevisan \cite{Trevisan} investigates the well-posedness of martingale solutions to stochastic differential equations, establishing a general equivalence between different 
formulations of multidimensional diffusions. 
Eberle et al.\ \cite{Eberle} develop a probabilistic coupling approach to quantify convergence to equilibrium for (kinetic) Langevin processes.  
\par
Recent research has addressed metastability and stochastic resonance in weakly time-inhomogeneous diffusions (Herrmann et al.\ \cite{Herrmann}), recurrence and transience in random environments driven by $\alpha$-stable L'evy processes (Kusuoka et al.\ \cite{Kusuoka}), and conditional independence structures for stationary diffusions with sparse drift functions (Boege et al.\ \cite{Boege}). Related developments include diffusions in bounded domains with boundary resetting (Grigorescu and Kang \cite{Grigorescu}) or killing via branching particle systems (Del Moral and Villemonais \cite{DelMoral}). Advances on 
multidimensional diffusions have been oriented to  methodologies for Fokker-Planck-Kolmogorov equations (Athanassoulis et al.\ \cite{Athanassoulis}), also in the presence of time delay (Loos and Klapp \cite{Loos}).
%
\par
Among the various existing methods for transforming stochastic processes, attention has been given in the literature to the Doob $h$-transform. The latter is a classical and powerful tool for constructing new Markov and diffusion processes by reweighting the transition probabilities of an original process through a positive harmonic function. It provides a unified framework for studying conditioned stochastic processes and has found  applications in probability theory and stochastic analysis. To mention some recent developments, we recall that for one-dimensional diffusions, Doob $h$-transforms have been used to characterize conditioned processes and reveal geometric connections with inversion mappings and time changes (cf.\ Alili et al.\ \cite{Alili}). They also provide a natural description of transient diffusions conditioned to exhibit a different asymptotic behavior, where the scale function acts as the transforming function (cf.\ Hening \cite{Hening}). 
More recently, Kuznetsov and Yuan \cite{Kuznetsov} combined Doob $h$-transforms with Siegmund duality to develop a Darboux transformation for diffusion processes, deriving explicit relationships between the transition densities of the original and transformed diffusions. Beyond the diffusion setting, generalized Doob $h$-transforms have been employed to represent Markov processes conditioned on rare large-deviation events through equivalent driven dynamics, with applications to nonequilibrium statistical mechanics and stochastic control (cf.\ Chetrite and Touchette \cite{Chetrite}). 
These developments illustrate the versatility of Doob $h$-transforms as a general framework for constructing conditioned stochastic processes, generating analytically tractable diffusion models, and modifying the long-term behavior of stochastic dynamics. Some applications can also be found in Borodin and Salminen \cite{Borodin}. 
\subsection{Aims of the paper}
In this paper, we develop a solvable-model construction framework for diffusion processes in $\mathbb{R}^n$ based on Doob $h$-transforms. 
The proposed approach extends the use of product-form transformations of transition p.d.f.'s to the multidimensional setting and provides a systematic procedure for generating new analytically tractable diffusion models. 
The transformation modifies only the drift component while preserving the diffusion matrix of the underlying process. 
As in the one-dimensional case, it yields a product-form relation between the transition p.d.f.'s of non-singular diffusion processes through an auxiliary weight function $h$. 
Compared with other transformation techniques, including monotone and space-time transformations (see 
\c{C}etin \cite{Cetin}, Forman and Sørensen \cite{Forman}, 
Giorno et al.\ \cite{Giorno2006}, 
Nagasawa \cite{Nagasawa}, and Ricciardi \cite{Ricciardi}), 
the present framework generates a distinct class of multidimensional transformed diffusions.
\par
We prove the well-posedness of the proposed transformation in the multidimensional setting. 
Unlike the one-dimensional case, where explicit representations of the weight function may be available, there is no universal form of $h$ in higher dimensions. 
Nevertheless, suitable choices of this function generate a broad family of transformed diffusion processes, provided
that the initial process has at least one attractive endpoint. 
We derive the infinitesimal moments of the transformed process and show that its transition density admits a mixture representation involving the original diffusion.
\par
The general results are illustrated through transformations of the Wiener and Ornstein--Uhlenbeck (OU) processes in $\mathbb{R}^n$. 
In particular, the transformed Wiener process is shown to admit a representation as a mixture of two Wiener processes, leading to transition densities exhibiting at most bimodal behavior. 
This feature provides a mechanism for constructing analytically tractable multimodal diffusion models.
\par
Following previous studies on diffusion processes with reset (see, for instance, 
Magdziarz and Ta\'zbierski \cite{Magdziarz} and Dharmaraja et al.\ \cite{Dharmaraja}), 
we extend the proposed framework to diffusion dynamics with Poissonian resetting, thereby generating new classes of reset diffusion processes with explicit characterizations. 
Finally, motivated by applications, we investigate first-passage problems and show that the combination of the product-form transformation with symmetry properties yields explicit expressions for the associated taboo transition densities.
\par
Although Doob $h$-transforms are classical objects, their systematic exploitation as a tool for constructing multidimensional diffusion models with explicit transition densities, with multimodal behavior, and analytically tractable resetting dynamics appears to have been relatively less explored. 
%
\subsection{Notations}
In the following, we introduce some useful notation that will be recalled throughout the paper. 
\par
Henceforth, multidimensional matrices and vectors will be denoted, respectively, by capital and bold lowercase letters. 
Let also $A=(a_{ij})\in \mathbb{R}^{m\times n}$ be a real matrix, with $m,n \geq 1$. 
We will express the transpose matrix as $A^\top:=(a_{ji})\in \mathbb{R}^{n\times m}$. 
The row and column vectors of $A$ will be represented 
respectively as
\begin{equation}
A_{i,*}:=(a_{i1}\ a_{i2} \ \ldots \ a _{in}),
\qquad  
A_{*,j}:=(a_{1j}\ a_{2j} \ \ldots \ a _{mj})^\top,
\qquad 
i=1,\ldots,m,\;\; j=1,\ldots,n.
\label{eq:Ai*A*j}
\end{equation}
Let $C=(c_{ij})\in \mathbb{R}^{n\times n}$ be an $n$-dimensional square matrix. Then, its determinant and trace will be represented as
$|C|=\det(C)$ and ${\rm Tr}(C)=\sum_{i=1}^n c_{ii}$, respectively.
\par
Let us now introduce the notation for the usual matrix operations. Let $A$ and $B=(b_{ij}) \in \mathbb{R}^{n \times r}$ be two real matrices, with $r \geq 1$ and $c\in \mathbb{R}$. We set $c\ A=(c\ a_{i j})$, $A \cdot B=\left(\sum_{k=1}^n a_{i k}b_{k j}\right)$, $A^d=\left(a_{i j}^d\right)$ for any $d\in\mathbb{N}$. Moreover, assuming that $C\in \mathbb{R}^{n\times n}$ is invertible (i.e. $|C|\neq0$), we will set its inverse as $C^{-1}\in \mathbb{R}^{n\times n}$, such that $C\cdot C^{-1}=C^{-1}\cdot C=\mathbb{1},$
where $\mathbb{1}$ is the $n$-dimensional unit matrix. 
\par
The Euclidean norm of ${\bf v}:=(v_1\ v_2\ \ldots\ v_n)\in \mathbb{R}^n$ will be indicated as
$$
 \lvert\lvert {\bf v} \lvert \lvert:=({\bf v}\cdot {\bf v})^{1/2}=(v_1^2+v_2^2+\ldots+v_n^2)^{1/2}.
$$
Moreover, the Frobenius matrix norm of an $n$-dimensional square matrix $C$ 
will expressed as
$$
\lvert\lvert C \lvert \lvert:= [{\rm Tr}(C^\top \cdot C)]^{1/2}=\left(\sum_{i=1}^n\sum_{j=1}^n (c_{ij})^2\right)^{1/2}.
$$
Let us now assume that $f:[0,T]\times\mathbb{R}^n\to \mathbb{R}$, with $T\in(0,\infty)$, is a suitably differentiable function. For differential calculus we will use:
\begin{align*}
\partial_t f& = \frac{\partial f}{\partial t}, 
  \quad \partial_{x_i} f = \frac{\partial f}{\partial x_i}, 
  \quad \partial_{x_i,x_j} f = \frac{\partial^2 f}{\partial x_i\partial x_j}, \quad \nabla f = (\partial_{x_i} f)_{i=1,\ldots,n},\qquad t\in[0,T],\; i,j=1,\ldots,n.
\end{align*}
\subsection{Mathematical background}\label{sect:backgr}
Let $(\Omega,{\cal F},\mu)$ be a probability space endowed with the (completed) natural filtration ${\cal F}_{t \geq 0}$ of the standard $n$-dimensional Brownian motion ${\bf W}_t$.
Let $\widehat {\bf X}_t=\big(\widehat {\bf X}_t\big)_{t\geq 0}=(\widehat{X}^1_t,\widehat{X}^2_t,\ldots,\widehat{X}^n_t)\in \mathbb{R}^n$, be a time-homogeneous diffusion process defined on the domain ${\cal D}=I_1\times 
I_2\times\ldots\times I_n\subseteq\mathbb{R}^n$, where $I_i=(\alpha_i,\beta_i)$ 
$(i=1,2,\ldots,n)$. 
The process $\widehat {\bf X}_t$ satisfies the following stochastic differential equation (SDE):
$$
{\rm d} \widehat {\bf X}_t
=\widehat{  {\bf b}}\big(\widehat {\bf X}_t\big){\rm d}t
+\widehat{\Sigma}\big(\widehat {\bf X}_t\big){\rm d}{\bf W}_t,
\qquad 
\widehat {\bf X}_0=\widehat{\bf x}_0\in {\cal D},
$$
where $\widehat{{\bf b}}({\bf x})=\left(\widehat{b}_i({\bf x})\right)\in \mathbb{R}^{n}$ is an $n$-dimensional vector, $\widehat{\Sigma}({\bf x})=\left(\widehat{\sigma}_{ij}({\bf x})\right)\in \mathbb{R}^{n \times n}$ an $n \times n$ matrix, $\bf{x} \in \mathbb{R}^{n}$. 
\par
The vector $\widehat{{\bf b}}({\bf x})$ is the  drift vector, whereas $\widehat{\Sigma}\big(\widehat {\bf X}_t\big)$ 
is the dispersion matrix. The  matrix $\widehat A({\bf x})=\big(\widehat a_{ij}({\bf x})\big)\in \mathbb{R}^{n \times n}$ 
is named diffusion matrix, and its elements are given by 
$$
 \widehat a_{ij}({\bf x}) := \sum_{k=1}^n \widehat{\sigma}_{ik} ({\bf x}) \,\widehat{\sigma}_{jk} ({\bf x}),
$$
i.e.\ $\widehat A({\bf x})= \widehat{\Sigma}({\bf x})\, \widehat{\Sigma}({\bf x})^\top$. 
The elements of $\widehat{{\bf b}}({\bf x})$ and  ${\widehat{A}}({\bf x})$ are the infinitesimal moments of 
$\widehat X_t$ given by 
\begin{equation}\label{inf_mom_X_t_hat}
\begin{array}{ll}
 \widehat{b}_i({\bf x})
 = \lim_{\Delta t \to 0^+} \frac{1}{\Delta t}\mathsf E\left[ \widehat{\bf{X}}^i_{t+\Delta t}
 - \widehat{\bf{X}}^i_t\, \Big\lvert\, {\bf \widehat{X}}_t={\bf x}\right], 
 & i=1,2,\ldots,n, \\[3mm]
 \widehat{a}_{ij}({\bf x})
= \lim_{\Delta t \to 0^+} \frac{1}{\Delta t}\mathsf E\left\{\left[\widehat{\bf{X}}^i_{t+\Delta t}-\widehat{\bf{X}}^i_t\right]\left[\widehat{\bf{X}}^j_{t+\Delta t}-\widehat{\bf{X}}^j_t\right] \, \Big\lvert \, \widehat{\bf{X}}_t={\bf x}\right\},
& i,j=1,2,\ldots,n,
\end{array}
\end{equation}
where the matrix  $\widehat{A}({\bf x})$  
is symmetric and  positive definite, and $\widehat{b}_i({\bf x})$ and $\widehat{\sigma}_{ij}({\bf x})$ are $C^1(\mathbb{R}^n)$ bounded measurable functions. Moreover, we suppose that $\widehat {\bf X}_t$ is globally Lipschitz (see, for instance,  Eq. (3.38) of Pavliotis \cite{Pavliotis}), i.e. there exists a positive constant $L>0 $ such that
%
%
\begin{equation} \label{cond_lipschitz_widehatxt}
    \big\lvert\big\lvert \widehat{\bf b}({\bf x})-\widehat{\bf b}({\bf y})\big\lvert\big\lvert+\big\lvert \big\lvert\widehat{\Sigma}({\bf x})-\widehat{\Sigma}({\bf y})\big\lvert\big\lvert\leq L \lvert\lvert{\bf x}-{\bf y}\lvert\lvert, \qquad {\bf x},{\bf y}\in \mathbb{R}^n.
\end{equation}
%
%
%
The infinitesimal generator  associated to $\widehat{\bf{X}}_t$ is
%
%
\begin{equation*}
 \widehat{\cal L} \psi({\bf x}) 
 = \sum_{1\leq i\leq n} \widehat{b}_i({\bf x})\partial_{x_i}\psi({\bf x})
    + \frac{1}{2} \sum_{1\leq i,j\leq n}  \widehat{a}_{ij}({\bf x}) 
\partial_{x_i,x_j}\psi({\bf x}),
\end{equation*}
for smooth test functions $\psi({\bf x})$, $\widehat{b}({\bf x})$ and 
$\widehat{a}_{ij}({\bf x})$. The adjoint operator is
\begin{equation*}
 \widehat{\cal L}^* \psi({\bf y}) 
 = -\sum_{1\leq i\leq n} \partial_{y_i}\big[\widehat{b}_i({\bf y})\psi({\bf y})\big]
    + \frac{1}{2} \sum_{1\leq i,j\leq n}  \partial_{y_i,y_j}\big[ \widehat{a}_{ij}({\bf y}) 
\psi({\bf y})\big].
\label{inf_gen_adj}
\end{equation*}
%
 Furthermore, for all 
$\bf{x},\,\bf{y}\in  {\cal D}$, $\Gamma \in {\cal B(\mathbb{R}^n)}$
and for all $t>\tau\geq 0$, with ${\cal F}_{\tau}^{\widehat{X}}$ the  filtration generated by the stochastic process  $\widehat{\bf X}_t$, one can associate to $\widehat{\bf X}_t$ the transition function
$$
\widehat{F}(\Gamma,t\,|\,\widehat{\bf X}_{\tau},\tau)
:=\mathsf P\big(\widehat{\bf X}_t \in \Gamma\,|\,{\cal F}_{\tau}^{\widehat{\bf X}}\big),
$$
and its density with respect to the Lebesgue measure is:
\begin{equation}
\widehat{F}(\Gamma,t\,|\,{\bf y},\tau)=\int_\Gamma \widehat{f}({\bf x},t\,|\,{\bf y},\tau)\ {\rm d}{\bf x},
\label{1.1}
\end{equation}
where 
 $\widehat{f}({\bf x},t\,|\,{\bf y},\tau)$ is the transition p.d.f.\ of $\widehat {\bf X}_t$,
%
%
given $\widehat{\bf X}_{\tau}={\bf y}$. 
This function satisfies the Kolmogorov and Fokker--Planck equations, i.e. 
\begin{equation}
{\partial_\tau}\widehat{f}({\bf x},t\,|\,{\bf y},\tau)+\widehat{\cal L} \widehat{f}({\bf x},t\,|\,{\bf y},\tau)=0,
\qquad  
{\partial_t}\widehat{f}({\bf x},t\,|\,{\bf y},\tau)=\widehat{\cal L}^* \widehat{f}({\bf x},t\,|\,{\bf y},\tau),
\label{1.3}
\end{equation}	
%
%
%
respectively, and the following initial condition: 
\begin{equation}
\lim_{t\downarrow\tau}\widehat{f}({\bf x},t\,|\,{\bf y},\tau)=
\lim_{\tau\uparrow t}\widehat{f}({\bf x},t\,|\,{\bf y},\tau)=\prod_{i=1}^n
\delta(x_i-y_i),
\label{1.5}
\end{equation}
where $\delta$ is the Dirac-delta function.
\subsection{Plan of the paper}
This paper is organized as follows. 
Section \ref{sect:2} presents the main theoretical results about the product-form relationship between the transition p.d.f.'s of non-singular diffusion processes in $\mathbb{R}^n$ $(n>1)$
induced by the multidimensional Doob $h$-transform. 
Particular attention is devoted to the conditions on the auxiliary weight function $h$ that ensure the well-posedness of the transformed process. 
We also investigate general properties of the transformation, including stochastic ordering, applications to diffusion processes with Poissonian resetting, and a potential-theoretic representation.
\par
Section \ref{casestudies} illustrates the methodology through two case studies: 
Section \ref{sect:Wiener} and \ref{sect:OUtransf} are devoted to transformed 
Wiener and Ornstein--Uhlenbeck processes, respectively. 
For the Wiener case, the transformed transition p.d.f.\ is shown to admit a representation as 
a two-component Gaussian mixture. 
\par
Section \ref{boundaries} deals with a special case about the transformation of a symmetric two-dimensional process in the presence of an absorbing boundary. An expression of the taboo transition p.d.f.\ of the transformed process is obtained. 
Section \ref{sect:5} specializes the general theory to two-dimensional linear diffusion processes, providing an explicit expression for the weight function and discussing in detail the transformed Wiener and 
Ornstein--Uhlenbeck models in Sections \ref{wiener_bidim} and \ref{subs:tOU}, respectively. 
Finally, Section \ref{sect:6} concludes the paper with some remarks and directions for future research.
 
\section{Main results}\label{sect:2}
\subsection{The multidimensional Doob $h$-transform}
Consider a time-homogeneous diffusion process ${\bf X}_t=\big({\bf X}_t\big)_{t\geq 0}=({X}^1_t,{X}^2_t,\ldots,{X}^n_t)\in \mathbb{R}^n$, defined on the domain ${\cal D}$, 
with infinitesimal moments ${\bf b}({\bf x})$ and  $A({\bf x})$, 
and satisfying the SDE
\begin{equation} \label{sde_x_trasf}
    {\rm d} {\bf X}_t={\bf b}\big({\bf X}_t\big){\rm d}t+{\Sigma}\big( {\bf X}_t\big){\rm d}{\bf W}_t,
    \qquad {\bf X}_0={\bf x}_0,
\end{equation}
with ${\bf x}_0\in {\cal D}$. 
The dispersion matrix ${\Sigma}({\bf x})=\big( \sigma_{ij}({\bf x})\big)\in \mathbb{R}^{n \times n}$ 
is related to the diffusion matrix $A({\bf x})=\big( a_{ij}({\bf x})\big)\in \mathbb{R}^{n \times n}$ by 
\begin{equation}
  a_{ij}({\bf x}) := \sum_{k=1}^n {\sigma}_{ik} ({\bf x}) \,{\sigma}_{jk} ({\bf x}).
 	\label{eq:defaij}
\end{equation}
In the following theorem we provide conditions such that the transition p.d.f.\ of ${\bf X}_t$, say 
$f({\bf x},t\,|\,{\bf y},\tau)$, is related to the transition p.d.f.\ of $\widehat{\bf X}_t$, given in (\ref{1.1}), through the following product-form identity:
\begin{equation}
f({\bf x},t\,|\,{\bf y},\tau)={h({\bf x})\over h({\bf y})}\,
\widehat{f}({\bf x},t\,|\,{\bf y},\tau),
\label{2.1}
\end{equation}
for any ${\bf x},{\bf y}\in {\cal D}$ and  $t> \tau\geq 0$, 
where $h({\bf x})$, ${\bf x} \in {\cal D}$, is a suitable weight function.
\begin{theorem} \label{teo_1}
Let $h({\bf x})\in C^{2}(\mathcal{D})$ be a strictly positive function, and such that 
\begin{equation}
\widehat{\cal L}^* h({\bf x})\equiv
\sum_{i=1}^n \widehat{b}_i({\bf x})\partial_{x_i}h({\bf x})
+{1\over 2}\sum_{i,j=1}^n  \widehat{a}_{ij}({\bf x})
\partial_{x_i,x_j}h({\bf x})=0, \qquad \boldsymbol{x} \in \mathcal{D.}
\label{2.5}
\end{equation}
Moreover, let us assume that the function
\begin{equation} \label{cond_lipschitz_w}
    \Omega({\bf x}):=\frac{\nabla h({\bf x})\cdot  A({\bf x})}{h({\bf x})}, 
    \qquad {\bf x}\in \mathcal{D},
\end{equation}
possesses the property of global Lipschitz continuity. Hence, the time-homogeneous diffusion process ${\bf X}_t$, with infinitesimal moments 
\begin{equation}
\begin{array}{ll}
	b_i({\bf x})=\widehat{b}_i({\bf x})
    +\displaystyle{1\over h({\bf x})}\sum_{j=1}^n \widehat{a}_{ij}({\bf x})
	\ \partial_{x_j}h({\bf x}) & (i=1,\ldots,n), 
    \\
	a_{ij}({\bf x})=  \widehat{a}_{ij}({\bf x})  & (i,j=1,\ldots,n),
\end{array}
\label{2.5bc} 
\end{equation}
has the transition p.d.f.\ $f({\bf x},t\,|\,{\bf y},\tau)$ given in (\ref{2.1}), 
for $\bf{x},\,\bf{y}\in  {\cal D}$ and $t>\tau\geq  0$, 
which satisfies the initial delta condition 
\begin{equation}
\lim_{t\downarrow\tau}f({\bf x},t\,|\,{\bf y},\tau)=
\lim_{\tau\uparrow t}f({\bf x},t\,|\,{\bf y},\tau)=\prod_{i=1}^n
\delta(x_i-y_i),
\label{2.4}
\end{equation}
and the Kolmogorov and Fokker--Planck equations  
\begin{equation}
{\partial_\tau}{f}({\bf x},t\,|\,{\bf y},\tau)+{\cal L} {f}({\bf x},t\,|\,{\bf y},\tau)=0,
\qquad  
{\partial_t}{f}({\bf x},t\,|\,{\bf y},\tau)={\cal L}^* {f}({\bf x},t\,|\,{\bf y},\tau),
	\label{2.3}
\end{equation}
where ${\cal L}$ is the infinitesimal generator  associated to $\bf{X}_t$ and ${\cal L}^*$ its adjoint operator.
\end{theorem}
\begin{proof}
We first verify that the infinitesimal moments of the   diffusion process ${\bf X}_t$ satisfy the Lipschitz condition. 
Making use of Eq.\ \eqref{2.5bc}  we obtain, for some positive $L$, 
\begin{align}
    \big\lvert\big\lvert {\bf b}({\bf x})-{\bf b}({\bf y})\big\lvert\big\lvert
    +\big\lvert \big\lvert{\Sigma}({\bf x})-{\Sigma}({\bf y})\big\lvert\big\lvert
    &=
    \big\lvert\big\lvert \widehat{\bf b}({\bf x})+\frac{\nabla h({\bf x})\cdot A({\bf x})}{h({\bf x})} 
    -\widehat{\bf b}({\bf y})-\frac{\nabla h({\bf y})\cdot A({\bf y})}{h({\bf y})}\big\lvert\big\lvert
    +\big\lvert \big\lvert\widehat{\Sigma}({\bf x})-\widehat{\Sigma}({\bf y})\big\lvert\big\lvert 
    \nonumber\\
     &\leq \big\lvert\big\lvert \widehat{\bf b}({\bf x})-\widehat{\bf b}({\bf y})\big\lvert\big\lvert
     +\big\lvert \big\lvert\widehat{\Sigma}({\bf x})-\widehat{\Sigma}({\bf y})\big\lvert\big\lvert 
     + \left\lvert \left\lvert \frac{\nabla h({\bf x})\cdot A({\bf x})}{h({\bf x})}-\frac{\nabla h({\bf y})\cdot A({\bf y})}{h({\bf y})}\right\lvert \right\lvert 
     \nonumber \\ 
    &\leq L \lvert\lvert {\bf x}-{\bf y} \lvert \lvert+\left\lvert \left\lvert \frac{\nabla h({\bf x})\cdot A({\bf x})}{h({\bf x})}
    -\frac{\nabla h({\bf y})\cdot A({\bf y})}{h({\bf y})}\right\lvert \right\lvert,
    \nonumber
\end{align}
where the first inequality is given by the triangle inequality and the second follows immediately from Eq.\ \eqref{cond_lipschitz_widehatxt}. Hence, the claimed 
Lipschitz condition on the infinitesimal moments of ${\bf X}_t$ follows from the hypothesis of global Lipschitz continuity of  the function (\ref{cond_lipschitz_w}).
\par
Let us now prove that  $f({\bf x},t\,|\,{\bf y},\tau)$ verifies the equations in \eqref{2.3}.
By hypothesis, the transition p.d.f. $\widehat{f}({\bf x},t\,|\,{\bf y},\tau)$ is solution of Kolmogorov and 
Fokker--Planck equations (\ref{1.3}). 
By taking into account that (\ref{2.1}) holds, the Kolmogorov equation in \eqref{2.3} becomes
%
\begin{eqnarray*}
	&&\left\{{1\over 2}\sum_{i,j=1}^n a_{ij}({\bf y})
	\Bigl[{2\over h({\bf y})}\partial_{y_i}h({\bf y})
	\partial_{y_j} h({\bf y})-\partial_{y_i,y_j}h({\bf y})\Bigr]-\sum_{i=1}^n b_i({\bf y})\partial_{y_i}h({\bf y})\right\}\frac{\widehat{f}({\bf x},t\,|\,{\bf y},\tau)}{h({\bf y})}\\
	&&\qquad +\sum_{i=1}^n\Bigl[b_i({\bf y})-\widehat{b}_i({\bf y})-{1\over h({\bf y})}
	\sum_{j=1}^n a_{ij}({\bf y})\partial_{y_j}h({\bf y})\Bigr] \partial_{y_i}
    \widehat{f}({\bf x},t\,|\,{\bf y},\tau)\\ 
    && \qquad +{1\over 2}
	\sum_{i,j=1}^n \bigl[ a_{ij}({\bf y})- \widehat{a}_{ij}({\bf y})\bigr]\partial_{y_i,y_j}\widehat{f}({\bf x},t\,|\,{\bf y},\tau)=0,
\end{eqnarray*}
which is an identity due to Eqs.\ (\ref{2.5}) and \eqref{2.5bc}. 
Similarly, using (\ref{2.1}) in the Fokker-Planck equation of (\ref{2.3}) one has: 
\begin{eqnarray*}
	&&\bigg\{\sum_{i=1}^n\Bigl[h({\bf x})\partial_{x_i}\bigl(
	\widehat{b}_i({\bf x})-b_i({\bf x})\bigr)-b_i({\bf x})\partial_{x_i}h({\bf x})\Bigr]\\
	&& \qquad +{1\over 2}\sum_{i,j=1}^n\Bigr[a_{ij}({\bf x})\partial_{x_i,x_j}h({\bf x})+h({\bf x})\partial_{x_i,x_j}
	[a_{ij}({\bf x})- \widehat a_{ij}({\bf x})]
	+2\partial_{x_i} a_{ij}({\bf x})\partial_{x_j}h({\bf x})\Bigr]\bigg\}\widehat{f}({\bf x},t\,|\,{\bf y},\tau)\\
	&& \qquad +\Bigl\{h({\bf x})\sum_{i=1}^n\bigl(\widehat{b}_i({\bf x})-b_i({\bf x})\bigr)+
	\sum_{i,j=1}^n\Bigl[h({\bf x}){\partial\over\partial x_j}
	\bigl(a_{ij}({\bf x})-\widehat{a}_{ij}({\bf x})\bigr)
	+ a_{ij}({\bf x})\partial_{x_j}h({\bf x})\Bigr]\Bigr\}\\
	&& \qquad \times 
	\partial_{x_i}\widehat{f}({\bf x},t\,|\,{\bf y},\tau)+
	{1\over 2}h({\bf x})\sum_{i,j=1}^n\bigl[ a_{ij}({\bf x})-\widehat{a}_{ij}({\bf x})
	\bigr]\partial_{x_i,x_j}\widehat{f}({\bf x},t\,|\,{\bf y},\tau)=0,
\end{eqnarray*}
which is an identity due to relations (\ref{2.5}) and \eqref{2.5bc}. Finally, recalling (\ref{1.5}) 
and (\ref{2.1}), we see that the initial condition (\ref{2.4}) holds. 
\end{proof}
\begin{remark}
Due to Eq.\ (\ref{2.5}), the weight $h$ is an harmonic function for $\widehat{\bf X}_t$. 
It represents a conditional mean and expresses a martingale property, since Eq.\ \eqref{2.1} yields 
\begin{equation}
 h({\bf y})=\int_{\mathbb{R}^n}h({\bf x})\,
 \widehat{f}({\bf x},t\,|\,{\bf y},\tau)\,{\rm d}{\bf x}
 =\mathsf{E}\left[h\big(\widehat{{\bf X}}_t\big)\,\big|\,\widehat{{\bf X}}_{\tau}
    ={\bf y}\right],
    \qquad {\bf y}\in {\cal D},
    \label{eq:wasmean}
\end{equation}
for all $h$'s satisfying the conditions of Theorem \ref{teo_1} 
and $t>\tau\geq 0$. 
\end{remark}
%
\begin{corollary}\label{corol:HHgenerale}
For any  initial state ${\bf y}\in {\cal D}$, let us define the set 
\begin{equation}
    H_{\bf y}=\{{\bf x} \in {\cal D}:\;h({\bf x})=h({\bf y})\}.
\label{hyperplaneD}
\end{equation}
From Eq.\ (\ref{2.1}), for $\bf{y}\in  {\cal D}$ and $t>\tau\geq 0$, one has
\begin{equation*}
f({\bf x},t\,|\,{\bf y},\tau)=\widehat{f}({\bf x},t\,|\,{\bf y},\tau),\qquad \forall \,{\bf x} \in H_{\bf y}. 
\end{equation*}
In addition, by defining the regions  
\begin{equation}
{\cal H}^-_{\bf y}=\left\{ {\bf x} \in {\cal D}:\;h({\bf x})<h({\bf y}) \right\} \quad \text{and} 
\quad 
{\cal H}^+_{\bf y}=\left\{{\bf x} \in {\cal D}:\;h({\bf x})>h({\bf y})\right\},
\label{spacesD}
\end{equation}
we have
\begin{equation} %
f({\bf x},t\,|\,{\bf y},\tau)<    (>)\, \widehat{f}({\bf x},t\,|\,{\bf y},\tau),
\qquad \forall \, {\bf x} \in {\cal H}^-_{\bf y}\; ({\bf x} \in {\cal H}^+_{\bf y}), 
\label{f_rel_fhat}
\end{equation}
and thus  
\begin{equation}
\begin{array}{l}
  {F}({\cal H}^-_{\bf y},t\,|\,{\bf y},\tau)
 = \widehat{F}({\cal H}^-_{\bf y},t\,|\,{\bf y},\tau)- \Pi (t\,|\,{\bf y},\tau),
\\
 {F}({\cal H}^+_{\bf y},t\,|\,{\bf y},\tau)
 =\widehat{F}({\cal H}^+_{\bf y},t\,|\,{\bf y},\tau)+ \Pi (t\,|\,{\bf y},\tau),
\end{array}
 \label{eq:relazugH}
\end{equation}
where ${F}$ is defined similarly as 
$\widehat{F}$ in (\ref{1.1}), and  
$$
 \Pi(t\,|\,{\bf y},\tau):=\int_{{\cal H}^-_{\bf y}} 
 \left|\widehat{f}({\bf x},t\,|\,{\bf y},\tau)- {f}({\bf x},t\,|\,{\bf y},\tau) \right|
    \,{\rm d}{\bf x}
    =\int_{{\cal H}^+_{\bf y}} 
 \left|\widehat{f}({\bf x},t\,|\,{\bf y},\tau)- {f}({\bf x},t\,|\,{\bf y},\tau) \right|
    \,{\rm d}{\bf x}.
$$
\end{corollary}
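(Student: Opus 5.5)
The plan is to read everything off the product form (\ref{2.1}) of Theorem \ref{teo_1}, using strict positivity of $h$ and the fact that both $f$ and $\widehat f$ are probability densities on ${\cal D}$.

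\emph{Step 1: level sets and sign comparison.} For ${\bf x}\in H_{\bf y}$ we have $h({\bf x})/h({\bf y})=1$, so (\ref{2.1}) gives $f=\widehat f$ at once. For ${\bf x}\in{\cal H}^-_{\bf y}$, dividing by $h({\bf y})>0$ gives $h({\bf x})/h({\bf y})<1$. Multiplying by $\widehat f({\bf x},t\,|\,{\bf y},\tau)$ then yields $f\le\widehat f$, and the inequality is strict wherever $\widehat f>0$. The region ${\cal H}^+_{\bf y}$ is handled symmetrically, which gives (\ref{f_rel_fhat}).

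\emph{Step 2: the strictness caveat.} Strictness needs $\widehat f({\bf x},t\,|\,{\bf y},\tau)>0$. For a non-singular diffusion with positive definite $\widehat A$, I would invoke positivity of the transition density in the interior of ${\cal D}$, for instance via the strong maximum principle or Gaussian-type lower bounds. Otherwise the inequality should be read as holding wherever $\widehat f\neq0$. This is the only delicate point.

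\emph{Step 3: mass balance.} Integrate (\ref{2.1}) over ${\cal D}$. Since $f$ is a transition density of ${\bf X}_t$ (this is also equivalent to the mean property (\ref{eq:wasmean})), we get
\[
\int_{\cal D} f\,{\rm d}{\bf x}=\int_{\cal D}\widehat f\,{\rm d}{\bf x}=1 .
\]
Now split ${\cal D}={\cal H}^-_{\bf y}\cup H_{\bf y}\cup{\cal H}^+_{\bf y}$. On $H_{\bf y}$ the integrands agree, whether or not this set has positive measure. Hence
\[
\int_{{\cal H}^-_{\bf y}}(\widehat f-f)\,{\rm d}{\bf x}=\int_{{\cal H}^+_{\bf y}}(f-\widehat f)\,{\rm d}{\bf x}.
\]
By Step 1 the integrand is nonnegative on ${\cal H}^-_{\bf y}$ and nonpositive on ${\cal H}^+_{\bf y}$. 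So both sides equal the corresponding integrals of $|\widehat f-f|$, which proves that the two expressions defining $\Pi$ coincide.

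\emph{Step 4: conclusion.} Writing $F({\cal H}^\mp_{\bf y},t\,|\,{\bf y},\tau)=\int_{{\cal H}^\mp_{\bf y}}f\,{\rm d}{\bf x}$ and adding and subtracting $\widehat f$ inside the integral gives (\ref{eq:relazugH}) directly.

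The main obstacle is Step 2, namely justifying strict positivity of $\widehat f$. Everything else is elementary bookkeeping.
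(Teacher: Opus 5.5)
Your proposal is correct and follows the same route as the paper. The paper gives no separate proof and presents the corollary as an immediate consequence of the product form (\ref{2.1}), combined with the normalization of $f$ and $\widehat f$ (equivalently, the mean property (\ref{eq:wasmean})), which is exactly your Steps 1, 3 and 4; your Step 2 caveat is a fair refinement, since the paper states the strict inequalities in (\ref{f_rel_fhat}) while tacitly assuming $\widehat f>0$ on ${\cal D}$.
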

\par
The result in (\ref{eq:relazugH}) shows that the mapping from $\widehat{\bf X}_t$ to ${\bf X}_t$, induced by the infinitesimal moments (\ref{2.5bc}), moves the probability mass $\Pi(t\,|\,{\bf y},\tau)$ from ${\cal H}^-_{\bf y}$ (associated with $\widehat{\bf X}_t$) to  ${\cal H}^+_{\bf y}$ (associated with ${\bf X}_t$). 
This is useful to construct an upper bound for the Wasserstein distance between the probability measures of the two processes. 
Specifically, for each fixed $t>\tau\geq 0$, we can consider two probability measures, namely $\widehat\mu$ and $\mu$, associated with $\widehat{\bf X}_{t}$ and ${\bf X}_{t}$. 
We also assume that $\widehat\mu,\mu\in\mathcal{P}_p(\mathbb{R}^n)$,  which is the set of probability measures over $\mathbb{R}^n$, equipped with the Euclidean norm, whose $p$-th moments are finite, for $p\geq 1$.  Then, with reference to Definition 1.2 and Theorem 1.6 of Chewi et al.\ \cite{chewi}, if the supports of $\widehat\mu$ and $\mu$ are both included in the same ball of diameter $d$, then the $p$-Wasserstein distance $W_p(\widehat\mu,\mu)$, $p\geq 1$, satisfies the inequality 
$$
 \left(W_p(\widehat\mu,\mu)\right)^p
\leq d^p \,\frac{1}{2}\, \Pi(t\,|\,{\bf y},\tau), 
\qquad t>\tau\geq 0, \;\; {\bf y}\in {\cal D}. 
$$
We recall that $\frac{1}{2}\,\Pi(t\,|\,{\bf y},\tau)$ also represents the total variation distance between $\widehat\mu$ and $\mu$ at time $t$. 
\par
Another application of Corollary \ref{corol:HHgenerale} can be given in the context of couplings. Indeed,
denoting by $(\widehat{\bf X}_t^{\rm c},{\bf X}_t^{\rm c})$ the maximal coupling of the pair of processes 
$(\widehat{\bf X}_t,{\bf X}_t)$, for all $t>\tau \geq 0$ and ${\bf y}\in \cal D$ one has (see, for instance, B\"ottcher \cite{Bottcher}) 
\begin{equation} \label{maximal_coupling_def}
\mathsf{P}\big[\widehat{\bf X}_t^{\rm c} ={\bf X}_t^{\rm c}\big]
=\int_{{\cal H}^-_{\bf y}} {f}({\bf x},t\,|\, {\bf y}, \tau) \ {\rm d}{\bf x}+\int_{H_{\bf y}\cup{\cal H}^+_{\bf y}} \widehat{f}({\bf x},t\,|\, {\bf y}, \tau) \ {\rm d}{\bf x},
\end{equation}
with $H_{\bf y}$, ${\cal H}^-_{\bf y}$ and ${\cal H}^+_{\bf y}$ defined in \eqref{hyperplaneD} and \eqref{spacesD}. This property can be further explicited when the structure of the weight function $h$ is known, as we will investigate in the following sections. 
\par
\begin{remark}\label{rem_indip}
If the process $\widehat{\bf X}_t$ 
has independent components, i.e.\ the infinitesimal moments satisfy 
$\widehat{b}_i({\bf x})\equiv \widehat{b}_i(x_i)$, 
$\widehat{a}_{ii}({\bf x})\equiv \widehat{a}_{ii}(x_i)$ 
and $\widehat{a}_{ij}({\bf x})=0$, for  $i,j=1,\ldots,n$; $i\neq j$, then 
a special solution of (\ref{2.5}) is given by: 
\begin{equation*}
h({\bf x})=1+c\,\prod_{i=1}^n\int_{u_i}^{x_i}\exp\Bigl\{-2\int_{u_i}^z
{\widehat{b}_i(v)\over  \widehat{a}_{ii}(v)}\,{\rm d}v\Bigr\}\,{\rm d}z,
\qquad {\bf x}\in \cal D,
\label{2.7}
\end{equation*}
for any  ${\bf u}\in{\cal D}$ and   $c\in \mathbb{R}$ such that $h({\bf x})>0$. In this case  the Lipschitz condition in \eqref{cond_lipschitz_w} still holds because of the conditions on the infinitesimal moments of $\widehat{\bf X}_t.$
\end{remark}
%
\par
Remark \ref{rem_indip} suggests looking for general expressions of the weight function of the form $h({\bf x})=1+c \,k({\bf x})$. Indeed, in the following theorem we prove that for similar forms of $h({\bf x})$ 
the transition p.d.f.\ of the process ${\bf X}_t$, even with non-independent components, can be expressed as a suitable proper mixture.
\begin{theorem} \label{prop_w}
Let $h({\bf x})=h_c({\bf x})\in C^{2}(\mathcal{D})$ be a function satisfying the conditions of Theorem \ref{teo_1}, with 
\begin{equation}
   h_c({\bf x}):=1+c \,k({\bf x}), 
   \qquad {\bf x}\in \mathcal{D},
   \label{w_c}
\end{equation}
where $k({\bf x})\in C^{2}(\mathcal{D})$ is a positive function, and $c \in (c_m,+\infty)$,  with $c_m=\displaystyle-\inf_{{\bf x} \in {\cal D}} \frac{1}{k({\bf x})}$.
Then, the transition p.d.f.\ given in (\ref{2.1}), for all   $t>\tau\geq  0$ and 
$\bf{x},\,\bf{y}\in  {\cal D}$ can be expressed as  
\begin{equation*} 
    f({\bf x}, t \,|\,{\bf y}, \tau)
    ={1+c \,k({\bf x})\over 1+c \,k({\bf y})}\,
    \widehat{f}({\bf x},t\,|\,{\bf y},\tau),
\end{equation*}
and it can be decomposed as the following mixture
\begin{equation}
    f({\bf x},t\,|\,{\bf y},\tau)
    =\theta_c({\bf y})\,
    \widehat{f}({\bf x},t\,|\,{\bf y},\tau)
    +(1-\theta_c({\bf y}))\,
    \widetilde{f}({\bf x},t\,|\,{\bf y},\tau),
    \label{rel_mixture}
\end{equation}
with  
 \begin{equation}
   \theta_c({\bf y})=\frac{1}{h_c({\bf y})},  
   \label{theta_gen}
 \end{equation}
and where 
  \begin{equation} \label{densita_fc}
      \widetilde{f}({\bf x}, t\,|\,{\bf y}, \tau)
      =\frac{k({\bf x})}{k({\bf y})}\,
      \widehat{f}({\bf x},t\,|\,{\bf y}, \tau)
       \end{equation}
 is the transition p.d.f.\ of the diffusion process, say 
 $\widetilde{\bf X}_t=\big(\widetilde{\bf X}_t\big)_{t\geq 0}$,  having the following infinitesimal moments:
   \begin{equation}
   \begin{array}{ll}
	\widetilde{b}_i({\bf x})
	=\widehat{b}_i({\bf x})+\displaystyle{1\over k({\bf x})}\sum_{j=1}^n \widehat{a}_{ij}({\bf x})
	\partial_{x_j}k({\bf x})\qquad &(i=1,\ldots,n),
    \\
	\widetilde{a}_{ij}({\bf x})=\widehat{a}_{ij}({\bf x}) 
	\qquad &(i,j=1,\ldots,n), 
\end{array}
\label{mominf_h} 
\end{equation}
with $\widehat{b}_i({\bf x})$ and $\widehat{a}_{ij}({\bf x})$   defined in  (\ref{inf_mom_X_t_hat}).
\end{theorem}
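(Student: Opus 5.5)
The plan is to get the product form directly from Theorem \ref{teo_1} and then obtain the mixture by elementary algebra. After that, I would show that $\widetilde f$ is itself a genuine transition p.d.f. by applying Theorem \ref{teo_1} a second time, now with weight $k$ instead of $h_c$.

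\emph{Step 1 (product form).} Since $h_c$ satisfies the hypotheses of Theorem \ref{teo_1}, relation (\ref{2.1}) holds with $h=h_c$. This gives the displayed ratio $(1+c\,k({\bf x}))/(1+c\,k({\bf y}))$ times $\widehat f$. The condition $c>c_m$ is exactly what guarantees $h_c>0$ on ${\cal D}$, so the ratio is well defined.

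\emph{Step 2 (mixture).} I would split the numerator:
\begin{equation*}
\frac{1+c\,k({\bf x})}{1+c\,k({\bf y})}\,\widehat f
=\frac{1}{h_c({\bf y})}\,\widehat f+\frac{c\,k({\bf y})}{h_c({\bf y})}\,\frac{k({\bf x})}{k({\bf y})}\,\widehat f .
\end{equation*}
Since $c\,k({\bf y})/h_c({\bf y})=1-1/h_c({\bf y})=1-\theta_c({\bf y})$, this is (\ref{rel_mixture}) with $\widetilde f$ as in (\ref{densita_fc}). It is a proper mixture whenever $\theta_c({\bf y})\in(0,1]$, which holds for $c\ge 0$. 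For $c_m<c<0$ the weight exceeds $1$, and the representation becomes a signed (generalized) mixture. I would state this explicitly.

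\emph{Step 3 ($\widetilde f$ is the transition density of $\widetilde{\bf X}_t$).} This is the only real step. Because $\widehat{\cal L}$ is linear and annihilates constants, (\ref{2.5}) for $h_c=1+c\,k$ gives $c\,\widehat{\cal L}k=0$. Hence for $c\neq0$ the function $k$ is itself a strictly positive $C^2$ solution of (\ref{2.5}); the case $c=0$ is trivial or excluded.

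I would then invoke Theorem \ref{teo_1} with weight $k$. The infinitesimal moments it produces are exactly (\ref{mominf_h}), and its conclusion is that $(k({\bf x})/k({\bf y}))\,\widehat f$ is the transition p.d.f. of $\widetilde{\bf X}_t$. That conclusion includes the delta initial condition and both Kolmogorov and Fokker--Planck equations.

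The main obstacle is checking the remaining hypothesis of Theorem \ref{teo_1}, namely global Lipschitz continuity of $\nabla k\cdot A/k$. This does not follow automatically from the corresponding property of $\nabla h_c\cdot A/h_c = c\,\nabla k\cdot A/(1+c\,k)$. My first attempt would use the identity
\begin{equation*}
\frac{\nabla k\cdot A}{k}=\frac{1+c\,k}{c\,k}\cdot\frac{\nabla h_c\cdot A}{h_c}.
\end{equation*}
I would then control the Lipschitz behaviour of the factor $(1+c\,k)/(c\,k)$ using boundedness of $A$ and positivity of $k$ bounded away from zero. If that fails, I would simply impose the Lipschitz property of $\nabla k\cdot A/k$ as part of the standing assumptions, as is implicitly done in Remark \ref{rem_indip}.

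Finally, integrating (\ref{densita_fc}) over ${\cal D}$ and using the martingale identity (\ref{eq:wasmean}) with $h=k$ confirms that $\widetilde f$ has unit mass.
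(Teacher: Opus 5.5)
Your argument is essentially the paper's: the product form comes from Theorem \ref{teo_1}, the same algebraic split of $h_c({\bf x})/h_c({\bf y})$ gives (\ref{rel_mixture}), and the computation of Theorem \ref{teo_1} is repeated with $k$ (harmonic by linearity, as you note) in place of $h$. Two points of comparison: the paper gets unit mass of $\widetilde f$ directly from (\ref{eq:wasmean}) for $h_c$, namely $\int_{\cal D}k({\bf x})\,\widehat f({\bf x},t\,|\,{\bf y},\tau)\,{\rm d}{\bf x}=(h_c({\bf y})-1)/c=k({\bf y})$, which does not depend on the Lipschitz property of $\nabla k\cdot A/k$ that you leave open (the paper only asserts that this property follows from (\ref{w_c}), and your suggested route via $k$ bounded away from zero fails for the paper's own example $k({\bf x})=e^{-2{\bf r}\cdot{\bf x}^\top}$, although there $\nabla k\cdot A/k=-2\,{\bf r}\cdot A$ is constant); and your caveat that $\theta_c({\bf y})>1$ for $c\in(c_m,0)$, so that (\ref{rel_mixture}) is then only a signed mixture, is correct and shows that item (i) of Remark \ref{rem:mixt} requires $c\geq 0$.
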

\begin{proof}
We refer to the non-trivial case when $c\neq 0$, i.e.\ $h_c({\bf x})\not \equiv 1$. Firstly, we observe that the density in \eqref{densita_fc} is always 
well posed. Indeed, $\widetilde{f}({\bf x}, t\,|\,{\bf y}, \tau)$ is positive due to   definition \eqref{densita_fc} and the positiveness of the function $k$.  Moreover, 
from (\ref{eq:wasmean}) and \eqref{w_c} we have 
\begin{eqnarray} 
\int_{{\cal D}}\widetilde{f}({\bf x},t\,|\,{\bf y},\tau)\ {\rm d}{\bf x}
\!\!\!\!  &=& \!\!\!\!   \int_{\cal D}\frac{k({\bf x})}{k({\bf y})}\, \widehat{f}({\bf x},t\,|\,{\bf y},\tau)\  {\rm d}{\bf x}
\nonumber\\
&=& \!\!\!\!   \frac{1}{h_c({\bf y})-1}\int_{{\cal D}}h_c({\bf x})\, \widehat{f}({\bf x},t\,|\,{\bf y},\tau)\ {\rm d}{\bf x}-\frac{1}{h_c({\bf y})-1}\int_{{\cal D}} \widehat{f}({\bf x},t\,|\,{\bf y},\tau)\ {\rm d}{\bf x}
\nonumber\\
&=&\!\!\!\!   \frac{h_c({\bf y})}{h_c({\bf y})-1}-\frac{1}{h_c({\bf y})-1}=1.
\nonumber
\end{eqnarray}
Eq.\ \eqref{w_c} ensures that 
the Lipschitz condition in \eqref{cond_lipschitz_w} is met for $k({\bf x})$.
Furthermore, $\widetilde{f}({\bf x},t\,|\,{\bf y},\tau)$ verifies Eqs.\ \eqref{2.3} for the infinitesimal moments given in \eqref{mominf_h} using the same strategy of Theorem \ref{teo_1} for $k({\bf x})$ instead of $h({\bf x})$. 
Finally, due to  relations (\ref{w_c}) and (\ref{theta_gen}), 
the right-hand-side  of (\ref{rel_mixture}) is expressed as:
\begin{eqnarray}
\theta_c({\bf y})\widehat{f}({\bf x},t\,|\,{\bf y},\tau)+(1-\theta_c({\bf y}))\frac{k({\bf x})}{k({\bf y})}\widehat{f}({\bf x},t\,|\,{\bf y},\tau)  
\!\!\!\! &=& \!\!\!\!\frac{1}{h_c({\bf y})}\widehat{f}({\bf x},t\,|\,{\bf y},\tau)+\frac{h_c({\bf y})-1}{h_c({\bf y})}\frac{h_c({\bf x})-1}{h_c({\bf y})-1}\widehat{f}({\bf x},t\,|\,{\bf y},\tau)
\nonumber\\
 &=& \!\!\!\!\frac{h_c({\bf x})}{h_c({\bf y})}\widehat{f}({\bf x},t\,|\,{\bf y},\tau)=f({\bf x},t\,|\,{\bf y},\tau),\nonumber
\end{eqnarray}
this completing the proof.
\end{proof}
%

\begin{remark}\label{rem:mixt}
Taking into account Theorem \ref{prop_w}, for the mixture on the right-hand-side of (\ref{rel_mixture}) one has:
\begin{itemize}
    \item[(i)] 
    the mixture is proper, since $\theta_c({\bf y})\in [0,1]$;
    \item[(ii)] $\displaystyle\lim_{c\to 0} f({\bf x},t\,|\,{\bf y},\tau)=\widehat{f}({\bf x},t\,|\,{\bf y},\tau),$ in this case $\theta_c({\bf y})=1$;
    \item[(iii)] if $c=\displaystyle\frac{1}{k({\bf y})}\in (c_m,+\infty)$, then $\theta_c({\bf y})=1/2$;
    \item[(iv)] $\displaystyle\lim_{c\to +\infty} f({\bf x},t\,|\,{\bf y},\tau)=\widetilde{f}({\bf x},t\,|\,{\bf y},\tau),$ in this case $\theta_c({\bf y})=0$. 
\end{itemize}
\end{remark}
\par
We also remark that a mixture representation analogous to \eqref{rel_mixture} was established by Giorno and Nobile \cite{Giorno2019} in the context of one-dimensional Gauss--Markov processes and time-inhomogeneous diffusion processes with transition p.d.f.'s connected by product-form representations.
\par
The remainder of this section is devoted to some features and applications of the previous results.  
\subsection{Comparisons based on the usual stochastic order}\label{sect:usord}
In the framework introduced in this section, it is interesting to analyze the stochastic comparison of the diffusion processes under investigation. In the following, the term `increasing' is used in non-strict sense. 
Aiming to compare the marginals, we say that $\widehat {\bf X}_t$ and ${\bf X}_t$ are ordered in the usual stochastic order, and write $\widehat{\bf X}_t\leq_{\rm st}{\bf X}_t$, if 
\begin{equation*}
 \mathsf{E}\big[\phi\big( \widehat{\bf X}_t\big) \big]
 \leq \mathsf{E}\big[\phi\big( {\bf X}_t  \big)\big], 
 \qquad t\geq 0,
\end{equation*}
for any increasing function $\phi$ for which the expectations are well defined (cf.\ Shaked and Shanthikumar \cite{Shaked}). 
Then, due to Theorem \ref{teo_1}, the validity of the product-form \eqref{2.1} yields a useful result in this field. To this aim, we recall that an $n$-dimensional random vector $\widehat {\bf X}_t$, $t\geq 0$, is said to be positively associated if ${\rm Cov}\big[\psi_1(\widehat {\bf X}_t),\psi_2(\widehat {\bf X}_t)\big]\geq 0$ for all increasing functions $\psi_1$, $\psi_2$ for which the covariance exists. Moreover, we say that $h({\bf x})$ is increasing in ${\cal D}$ if it is increasing componentwise, for ${\bf x}\in {\cal D}$. 
\par
Now we can provide sufficient conditions for the marginals of $\widehat{\bf X}_t$ and ${\bf X}_t$ to be ordered in the usual stochastic order. 
\begin{proposition} \label{prop_ord}
Under the assumptions of Theorem \ref{teo_1}, if $\widehat {\bf X}_t$ is positively associated for all $t\geq 0$, and if $h({\bf x})$ is increasing in ${\cal D}$, then $\widehat{\bf X}_t\leq_{\rm st} {\bf X}_t$ for all $t\geq 0$.
\end{proposition}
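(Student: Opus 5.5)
The plan is to compare the marginals by conditioning on a fixed initial state ${\bf y}\in{\cal D}$ (at time $\tau=0$, the state ${\bf X}_0=\widehat{\bf X}_0={\bf y}$). The product form \eqref{2.1} from Theorem \ref{teo_1} turns expectations under ${\bf X}_t$ into weighted expectations under $\widehat{\bf X}_t$. For any increasing $\phi$ for which the expectations exist, \eqref{2.1} gives
\begin{equation*}
\mathsf{E}\big[\phi({\bf X}_t)\big]=\int_{\cal D}\phi({\bf x})\,\frac{h({\bf x})}{h({\bf y})}\,\widehat f({\bf x},t\,|\,{\bf y},0)\,{\rm d}{\bf x}
=\frac{\mathsf{E}\big[\phi(\widehat{\bf X}_t)\,h(\widehat{\bf X}_t)\big]}{h({\bf y})}.
\end{equation*}
By the martingale identity \eqref{eq:wasmean} we have $h({\bf y})=\mathsf{E}[h(\widehat{\bf X}_t)]$. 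Hence
\begin{equation*}
\mathsf{E}\big[\phi({\bf X}_t)\big]-\mathsf{E}\big[\phi(\widehat{\bf X}_t)\big]
=\frac{\mathsf{E}[\phi(\widehat{\bf X}_t)h(\widehat{\bf X}_t)]-\mathsf{E}[\phi(\widehat{\bf X}_t)]\,\mathsf{E}[h(\widehat{\bf X}_t)]}{\mathsf{E}[h(\widehat{\bf X}_t)]}
=\frac{{\rm Cov}\big[\phi(\widehat{\bf X}_t),h(\widehat{\bf X}_t)\big]}{h({\bf y})}.
\end{equation*}

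Next I take $\psi_1=\phi$ and $\psi_2=h$, both increasing componentwise. Positive association of $\widehat{\bf X}_t$ then gives a nonnegative covariance. Since $h({\bf y})>0$, this yields $\mathsf{E}[\phi(\widehat{\bf X}_t)]\le\mathsf{E}[\phi({\bf X}_t)]$ for all $t>0$. At $t=0$ both processes start at ${\bf y}$, so the inequality is trivially an equality. If the initial state is random rather than fixed, with a common law, I would average over it. The averaging is legitimate because the conditional inequality holds for every ${\bf y}$; here I read the association hypothesis as holding for the conditional law of $\widehat{\bf X}_t$ given the start.

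The main obstacle is integrability. The covariance must exist, which requires $\phi h$, $\phi$ and $h$ to be integrable with respect to $\widehat f$. Integrability of $h$ comes from \eqref{eq:wasmean}. Integrability of $\phi h$ is exactly the condition that $\mathsf{E}[\phi({\bf X}_t)]$ be well defined, which is assumed in the definition of $\le_{\rm st}$. The remaining step is then a direct citation of the definition of positive association.
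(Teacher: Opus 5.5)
Your proposal is correct and is essentially the paper's argument. The paper invokes Theorem 6.B.8 of \cite{Shaked}, which combines association of $\widehat{\bf X}_t$ with the increasing density ratio $f/\widehat f=h({\bf x})/h({\bf y})$ from \eqref{2.1}; your covariance identity, normalized by $\mathsf{E}[h(\widehat{\bf X}_t)]=h({\bf y})$ from \eqref{eq:wasmean}, is that theorem's proof written out here, and your explicit fixed common initial state and integrability check make precise points the paper leaves implicit.
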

    \begin{proof} 
        Recalling Theorem 6.B.8 of \cite{Shaked}, $\widehat{\bf X}_t\leq_{\rm st}{\bf X}_t$ holds as long as $\widehat{\bf X}_t$ is associated and the p.d.f.'s ratio $f({\bf x},t\,|\, {\bf y}, \tau)/\widehat{f}({\bf x},t\,|\, {\bf y}, \tau)$ is increasing in ${\bf x}$, for any ${\bf y}\in {\cal D}$ and $t>\tau\geq 0$. Hence, the result follows immediately by recalling Eq.\ \eqref{2.1}.
    \end{proof}
\par
\subsection{An application to processes with Poissonian resetting}\label{subs:generalreset}
Let us analyze the diffusion processes under investigation in the presence of a resetting mechanism. Thanks to Theorem \ref{teo_1}, hereafter we show that a product-form relation similar to Eq.\ (\ref{2.1}) can also be obtained for the transition p.d.f.'s of the two diffusion processes considered so far when both processes are subject to resetting to a fixed state $\boldsymbol{y}^{\rm R}$ occurring at the events of two identically distributed Poisson processes. 
\par
We consider a diffusion process subject to an underlying resetting scheme, led by a time-nonhomogeneous Poisson process $N_t=\big({N}_t\big)_{t\geq 0}$, so that the process returns to a resetting state $\boldsymbol{y}^{\rm R}\in {\cal D}$ at any occurrence of $N_t$. 
Therefore, denoting by $\widehat{\bf X}_t^{\rm R}=\big(\widehat{\bf X}_t^{\rm R}\big)_{t\geq 0}$
the resulting diffusion process, it satisfies the SDE
\begin{equation} \label{sde_x_r}
    {\rm d} \widehat{\bf X}_t^{\rm R}
    =\widehat{{\bf b}}\big(\widehat{\bf X}_t^{\rm R}\big){\rm d}t
    +\widehat{{\Sigma}}\big( \widehat{{\bf X}}_t^{\rm R}\big){\rm d}{\bf W}_t
    +\left(\boldsymbol{y}^{\rm R}-\widehat{\bf X}_t^{\rm R}\right)\,{\rm d}N_t,
    \qquad \widehat{\bf X}_0^{\rm R}=\widehat{\bf x}_0,
\end{equation}
with $\widehat{\bf x}_0\in {\cal D}$, and $N_t$ independent of ${\bf W}_t$. 
Hence, in absence of resets $\widehat{\bf X}_t^{\rm R}$ behaves as the diffusion process $\widehat{\bf X}_t$ considered so far. 
In the following proposition, we show that a product-form relation similar to Eq.\ \eqref{2.1} also holds  for the transition p.d.f.'s of $\widehat{\bf X}_t^{\rm R}$ and ${\bf X}_t^{\rm R}=\left({\bf X}_t^{\rm R}\right)_{t \geq 0}$, the latter being a diffusion process obtained by superposing the same resetting mechanism considered above to the process ${\bf X}_t$. 
Specifically, the process ${\bf X}_t^{\rm R}$ satisfies the SDE 
\begin{equation} \label{sde_x_r_trasf}
     {\rm d} {\bf X}_t^{\rm R}
    ={\bf b}\big({\bf X}_t^{\rm R}\big){\rm d}t
    +{\Sigma}\big({\bf X}_t^{\rm R}\big){\rm d}{\bf W}_t
    +\left(\boldsymbol{y}^{\rm R}-{\bf X}_t^{\rm R}\right)\,{\rm d}N_t',
    \qquad {\bf X}_0^{\rm R}={\bf x}_0,
\end{equation}
where ${\bf x}_0\in {\cal D}$ and $\boldsymbol{y}^{\rm R}\in {\cal D}$ are identical to the same terms as given in (\ref{sde_x_r}), and $N_t'$ is an independent copy of $N_t$. 
\par
The considered Doob $h$-transform leads to a 
product-form identity, similar to (\ref{2.1}), between the transition p.d.f.'s of the processes ${\bf X}_t^{\rm R}$ and $\widehat{\bf X}_t^{\rm R}$, valid only for   transitions starting from the reset state $\boldsymbol{y}^{\rm R}$. 
\begin{proposition}\label{prop_reset}
Let $\widehat{\bf X}_t^{\rm R}$ and ${\bf X}_t^{\rm R}$ be $n$-dimensional diffusion processes satisfying the SDE's \eqref{sde_x_r} and \eqref{sde_x_r_trasf}, respectively. Both processes reset to an arbitrary state $\boldsymbol{y}^{\rm R}\in {\cal D}$ at any occurrence of the independent nonhomogeneous Poisson processes $N_t$ and $N_t'$, both having intensity $\lambda(t)$, and cumulative intensity 
$\Lambda(t)=\int_{0}^t\lambda(s)\, {\rm d}s$. If the conditions of Theorem \ref{teo_1} are satisfied, then the transition p.d.f.\ of ${\bf X}_t^{\rm R}$ is expressed in terms of the transition p.d.f.\ of $\widehat{\bf X}_t^{\rm R}$ via the following product-form relation: 
\begin{equation} \label{rel_f}
    f^{\rm R}({\bf x},t\,|\,{\bf y}^{\rm R},\tau)={h({\bf x})\over h({\bf y}^{\rm R})}\,
\widehat{f}^{\rm R}({\bf x},t\,|\,{\bf y}^{\rm R},\tau),
\end{equation}
for any $\bf{x},\bf{y}\in  {\cal D}$ and $t>\tau\geq  0$. 
\end{proposition}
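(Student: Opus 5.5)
The plan is to use the standard renewal (last-reset) decomposition of the transition density of a diffusion with Poissonian resetting. Conditioning on the time of the last reset before $t$ gives, for the process $\widehat{\bf X}_t^{\rm R}$ started at ${\bf y}$ at time $\tau$,
\begin{equation*}
\widehat f^{\rm R}({\bf x},t\,|\,{\bf y},\tau)=e^{-[\Lambda(t)-\Lambda(\tau)]}\,\widehat f({\bf x},t\,|\,{\bf y},\tau)
+\int_\tau^t \lambda(s)\,e^{-[\Lambda(t)-\Lambda(s)]}\,\widehat f({\bf x},t\,|\,{\bf y}^{\rm R},s)\,{\rm d}s .
\end{equation*}
The first term is the event that no reset occurs in $(\tau,t]$. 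The second term integrates over the time $s$ of the last reset, after which the process evolves freely from ${\bf y}^{\rm R}$. I will justify this from the SDE \eqref{sde_x_r}, using the independence of $N_t$ and ${\bf W}_t$ and the Poisson property. Between resets the process solves the reset-free SDE, so its law there is governed by $\widehat f$. The last-reset time has density $\lambda(s)e^{-[\Lambda(t)-\Lambda(s)]}$ on $(\tau,t)$, and there is an atom $e^{-[\Lambda(t)-\Lambda(\tau)]}$ for the event of no reset. The identical decomposition holds for ${\bf X}_t^{\rm R}$ with $f$ in place of $\widehat f$, since $N_t'$ has the same intensity $\lambda(t)$.

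Next I specialize both decompositions to the initial state ${\bf y}={\bf y}^{\rm R}$. By Theorem \ref{teo_1}, each occurrence of $f$ can then be replaced by $\frac{h({\bf x})}{h({\bf y}^{\rm R})}\widehat f$. This applies to the no-reset term, which now also starts from ${\bf y}^{\rm R}$, and to every integrand $f({\bf x},t\,|\,{\bf y}^{\rm R},s)$. The factor $h({\bf x})/h({\bf y}^{\rm R})$ does not depend on $s$, so it comes out of the integral, and comparing with the decomposition of $\widehat f^{\rm R}$ gives \eqref{rel_f}.

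The restriction to the starting point ${\bf y}^{\rm R}$ is essential. For a general ${\bf y}$ the no-reset term carries the factor $h({\bf x})/h({\bf y})$ while the reset terms carry $h({\bf x})/h({\bf y}^{\rm R})$, so no single product form exists. The statement's phrase "for any ${\bf x},{\bf y}$" should therefore be read with ${\bf y}={\bf y}^{\rm R}$.

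The main obstacle is a rigorous justification of the renewal decomposition in the multidimensional, time-nonhomogeneous setting. I would handle it by conditioning on the whole path of $N$ in $(\tau,t]$. Given the jump times, the strong Markov property of the reset-free diffusion, which is well posed by the Lipschitz conditions, shows that the law at time $t$ depends only on the last jump time. Averaging over the distribution of that last jump time then yields the formula.
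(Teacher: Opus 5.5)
Your proposal is correct and takes essentially the same route as the paper. The paper writes the last-reset decomposition of $\widehat f^{\rm R}$ from ${\bf y}^{\rm R}$, replaces every transition density by the product form \eqref{2.1} with the $s$-independent factor $h({\bf x})/h({\bf y}^{\rm R})$, and compares with the analogous decomposition of $f^{\rm R}$. Your conditioning on the path of $N$ fills in the justification the paper delegates to the one-dimensional literature, and your caveat that the identity holds only for starting point ${\bf y}^{\rm R}$ matches the paper's own remark that it is valid only for transitions starting from the reset state.
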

\begin{proof}
Recalling the reasoning brought on for one-dimensional processes under resetting (see, for instance, Giorno et al.\ \cite{GiornoNobileSpina} and references therein), by the Markov property of the involved processes, the transition p.d.f.\ of the process introduced in \eqref{sde_x_r} can be expressed as 
\begin{equation} \label{densita_f_r}
   \widehat{f}^{\rm R}(\boldsymbol{x},t\,|\,\boldsymbol{y}^{\rm R},\tau)
=e^{-\left[\Lambda(t)-\Lambda(\tau)\right]}\;\widehat{f}(\boldsymbol{x},t\,|\,\boldsymbol{y}^{\rm R},\tau)
+\int_{\tau}^t \lambda(s)\,e^{-\left[\Lambda(t)-\Lambda(s)\right]}\,
\widehat{f}(\boldsymbol{x},t\,|\,\boldsymbol{y}^{\rm R},s)\, {\rm d} s, 
\end{equation}
where $s$ is the last instant before $t$ in which a reset occurs. 
Making use of \eqref{2.1} and choosing the weight function $h$ as specified in Theorem \ref{teo_1}, a simple algebraic manipulation leads to
$$ 
 \frac{h(\boldsymbol{x})}{h(\boldsymbol{y}^{\rm R})}\,
 \widehat{f}^{\rm R}(\boldsymbol{x},t\,|\,\boldsymbol{y}^{\rm R},\tau)
 =e^{-\left[\Lambda(t)-\Lambda(\tau)\right]}\,
 f(\boldsymbol{x},t\,|\,\boldsymbol{y}^{\rm R},\tau)
 +\int_{\tau}^t \lambda(s)\,
 e^{\left[\Lambda(t)-\Lambda(s)\right]}\,
 f(\boldsymbol{x},t\,|\,\boldsymbol{y}^{\rm R},s)\, {\rm d} s,
$$
justifying the result in \eqref{rel_f}, since $f^{\rm R}$ satisfies an equation analogous to \eqref{densita_f_r}.
\end{proof}
Let us now consider a special case, in which the processes introduced in \eqref{sde_x_r} and 
 (\ref{sde_x_r_trasf}) are subject to a resetting scheme led by homogeneous Poisson processes. In this case we define the stationary distributions of $\widehat{{\bf X}}_t^{\rm R}$ and ${\bf X}_t^{\rm R}$ as
\begin{equation} \label{dens_stazionaria}
    \widehat{f}^{\rm R}({\bf x}\,|\,{\bf y}^{\rm R}):=\lim_{t\to \infty}  \widehat{f}^{\rm R}({\bf x},t\,|\,{\bf y}^{\rm R},\tau), \qquad f^{\rm R}({\bf x}\,|\,{\bf y}^{\rm R}):=\lim_{t\to \infty}  f^{\rm R}({\bf x},t\,|\,{\bf y}^{\rm R},\tau),
\end{equation}
respectively. By proceeding similarly as in Proposition \ref{prop_reset}, the following result yields the stationary distribution of the transformed process ${\bf X}_t^{\rm R}$ governed by Eq.\ \eqref{sde_x_r_trasf}. 
\begin{proposition} \label{prop_reset_2}
    Let $\widehat{\bf X}_t^{\rm R}$ and ${\bf X}_t^{\rm R}$ be the $n$-dimensional diffusion processes satisfying the assumptions of Proposition \ref{prop_reset}, so their transition p.d.f.'s satisfy the relation in \eqref{rel_f}. If the intensity of the resetting processes $N_t$ and $N_t'$ is a positive constant $\lambda$, then the stationary distribution of ${\bf X}_t^{\rm R}$ is expressed in terms
    of the stationary distribution of $\widehat{{\bf X}}_t^{\rm R}$, given in \eqref{dens_stazionaria}, via the following product-form relation:
    \begin{equation} \label{dens_stazionaria_trasf}
        f^{\rm R}({\bf x}\,|\,{\bf y}^{\rm R})=\frac{h({\bf x})}{h({\bf y}^{\rm R})} \widehat{f}^{\rm R}({\bf x}\,|\,{\bf y}^{\rm R}).
    \end{equation}
\end{proposition}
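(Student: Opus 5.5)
The plan is to pass to the limit $t\to\infty$ in the product-form identity \eqref{rel_f} of Proposition \ref{prop_reset}, specialized to constant intensity $\lambda(t)\equiv\lambda$, so that $\Lambda(t)-\Lambda(s)=\lambda(t-s)$. Since the weight factor $h({\bf x})/h({\bf y}^{\rm R})$ does not depend on $t$ or $\tau$, the limit of the left-hand side exists whenever the limit of the right-hand side does. The two limits then differ exactly by that factor, which gives \eqref{dens_stazionaria_trasf} once both stationary densities in \eqref{dens_stazionaria} are shown to be well defined.

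The steps are as follows. First, write the renewal representation \eqref{densita_f_r} with constant rate. By time-homogeneity, $\widehat f({\bf x},t\,|\,{\bf y}^{\rm R},s)=\widehat f({\bf x},t-s\,|\,{\bf y}^{\rm R},0)$, so after the substitution $u=t-s$ we get
\begin{equation*}
\widehat f^{\rm R}({\bf x},t\,|\,{\bf y}^{\rm R},\tau)=e^{-\lambda(t-\tau)}\widehat f({\bf x},t-\tau\,|\,{\bf y}^{\rm R},0)+\int_0^{t-\tau}\lambda e^{-\lambda u}\,\widehat f({\bf x},u\,|\,{\bf y}^{\rm R},0)\,{\rm d}u .
\end{equation*}
Second, let $t\to\infty$. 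The first term vanishes provided $\widehat f$ does not grow exponentially in time; this holds, for instance, if $\widehat f({\bf x},u\,|\,{\bf y}^{\rm R},0)$ is bounded for large $u$. The integral then converges by monotone convergence to the Laplace transform
\begin{equation*}
\widehat f^{\rm R}({\bf x}\,|\,{\bf y}^{\rm R})=\lambda\int_0^\infty e^{-\lambda u}\widehat f({\bf x},u\,|\,{\bf y}^{\rm R},0)\,{\rm d}u,
\end{equation*}
which is independent of $\tau$. Third, apply the same computation to $f^{\rm R}$, or simply use \eqref{rel_f}, to obtain $f^{\rm R}({\bf x}\,|\,{\bf y}^{\rm R})=\lambda\int_0^\infty e^{-\lambda u}f({\bf x},u\,|\,{\bf y}^{\rm R},0)\,{\rm d}u$. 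Inserting \eqref{2.1} inside the integral and pulling the factor $h({\bf x})/h({\bf y}^{\rm R})$ out yields \eqref{dens_stazionaria_trasf}.

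As a consistency check, \eqref{eq:wasmean} combined with Fubini gives
\begin{equation*}
\int_{\cal D} h({\bf x})\,\widehat f^{\rm R}({\bf x}\,|\,{\bf y}^{\rm R})\,{\rm d}{\bf x}=\lambda\int_0^\infty e^{-\lambda u}h({\bf y}^{\rm R})\,{\rm d}u=h({\bf y}^{\rm R}).
\end{equation*}
So the right-hand side of \eqref{dens_stazionaria_trasf} integrates to one, confirming that it is a genuine probability density.

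The main obstacle is justifying the limit of the first term, $e^{-\lambda(t-\tau)}f({\bf x},t-\tau\,|\,{\bf y}^{\rm R},0)$, for the transformed density. Because $h({\bf x})$ is fixed for given ${\bf x}$, this reduces to the same pointwise control of $\widehat f$ needed for the untransformed process. I would therefore state the argument pointwise in ${\bf x}$ under the assumption that the stationary density of $\widehat{\bf X}^{\rm R}_t$ exists, which the statement implicitly assumes through \eqref{dens_stazionaria}. The existence of $f^{\rm R}({\bf x}\,|\,{\bf y}^{\rm R})$ then follows directly from \eqref{rel_f} without any extra estimate.
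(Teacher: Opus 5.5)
Your proposal is correct and follows essentially the paper's route: the paper writes the constant-rate stationary density as $\int_0^\infty\lambda e^{-\lambda\theta}\,\widehat f({\bf x},\theta\,|\,{\bf y}^{\rm R})\,{\rm d}\theta$ and then inserts the product form \eqref{2.1} under the integral, exactly as in your third step. Your additional points only make explicit what the paper leaves implicit: the vanishing of the no-reset term, the shortcut of passing to the limit directly in \eqref{rel_f}, and the normalization check via \eqref{eq:wasmean}.
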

\begin{proof}
Recalling Eqs.\ \eqref{densita_f_r} and \eqref{dens_stazionaria}, one has that the stationary distribution of $\widehat{{\bf X}}_t^{\rm R}$ when $\lambda(t)=\lambda>0$, and thus $\Lambda(t)=\lambda t$, is given by
   $$
        \widehat{f}^{\rm R}({\bf x}\,|\,{\bf y}^{\rm R})=\int_0^\infty\lambda \, e^{-\lambda\theta}\, \widehat{f}({\bf x},\theta\,|\,{\bf y}^{\rm R})\, {\rm d}\theta.
   $$
   The proof of (\ref{dens_stazionaria_trasf}) then follows immediately by making use of \eqref{2.1} and choosing the weight function $h$ as specified in Theorem \ref{teo_1}.
\end{proof}
\par
A special case related to Proposition \ref{prop_reset_2} will be studied in Section \ref{sect:Wiener}. 
\subsection{Diffusion in a potential}
In the following sections, we investigate some applications of the previous results to particular choices of the diffusion process $\widehat{\bf X}_t$ concerning the Wiener process, the OU process and a linear process. For such cases, it is also useful to adopt a description in which the transformed diffusion process is viewed as moving in a potential $U({\bf x})$ in $C_1({\cal D})$. Hence, when the 2nd-order infinitesimal moments of ${\bf X}_t$ are constant, that is, when 
${\Sigma}( {\bf x})\equiv{\Sigma}$, the SDE (\ref{sde_x_trasf}) for ${\bf X}_t$ can be expressed as 
\begin{equation} \label{NEWsde_x_trasf}
    {\rm d} {\bf X}_t=-  \nabla U\big({\bf X}_t\big){\rm d}t+{\Sigma} \,{\rm d}{\bf W}_t,\qquad {\bf X}_0={\bf y}.
\end{equation}
This representation based on the potential $U$ is useful for analyzing dual diffusions (i.e., diffusions driven by opposite force fields) and related passage problems, see Comtet et al.\ \cite{Comtet_etal}. 
Moreover, the potential U has also been employed in one-dimensional settings, when featuring two (or more) minima, to model bimodal (or multimodal) diffusions (cf.\ Forman and S\o rensen \cite{Forman}, Hongler et al.\ \cite{Hongler}, and references therein), as well as in monotone cases, to represent dynamics influenced by attractive natural endpoints (see Di Crescenzo et al.\ \cite{Di_Crescenzo_Giorno_Nobile}). In the multidimensional framework, in Li et al.\ \cite{Lietal} the authors study the relationship between the local minima of a multidimensional potential and the distributions of coupling times for two stochastic processes, i.e.\ overdamped Langevin dynamics. Moreover, Lavenant et al.\ \cite{Lavenant} analyze  
a multidimensional model in which the single cell RNA-sequencing dynamics, described through a diffusion process, are subjected to a potential landscape. 
%
\section{Case studies for transformed $n$-dimensional processes} \label{casestudies}
\subsection{Transformed Wiener process}\label{sect:Wiener}
Let $\widehat{\bf X}_t$ be the $n$-dimensional Wiener process 
defined in ${\cal D}\equiv{\mathbb R}^n$, with infinitesimal moments 
%
\begin{equation} \label{mom_infinitesimali_y}
\begin{array}{ll}
 \widehat{b}_i({\bf x})=m_i & (i=1,\ldots,n), \\
 \widehat{a}_{ij}({\bf x})= a_{ij} & (i,j=1,\ldots,n),
\end{array}    
\end{equation}
where $m_i\in\mathbb{R}\;(i=1,\ldots,n)$, $a_{ij}\in\mathbb{R}$ $(i,j=1,\ldots,n\,;i\not= j)$, 
$a_{ii}>0$ $(i=1,\ldots,n)$, with $\Delta:=|A|>0$, since, recalling Eq.\ (\ref{eq:defaij}), 
the matrix $A=(a_{ij})$ is symmetric and positive definite. 
As well known, the transition p.d.f.\ of $\widehat{\bf X}_t$ for all $t>\tau\geq 0$ and 
${\bf x},\,{\bf y}\in{\mathbb R}^n$ is given by 
\begin{equation}
\widehat{f}({\bf x},t\,|\,{\bf y},\tau)
 = {\cal N} \Big({\bf y}+(t-\tau)\,{\bf m}, (t-\tau)\,A \Big),
	\label{3.2}
\end{equation}
with ${\bf m}=(m_1,m_2,\ldots,m_n)$, and where ${\cal N} ( {\bf z},  \Upsilon )$ denotes an $n$-dimensional Gaussian density having mean ${\bf z}$ and covariance matrix $\Upsilon $. 
%
\par
If not all the components of ${\bf m}$ are vanishing, then we consider a family of positive functions $h_c({\bf x})\in C^{2}(\mathbb{R}^n)$ defined as 
\begin{equation}
h_c({\bf x})=1+c\exp\bigl\{-2\,{\bf r} \cdot {\bf x}^\top\bigr\}, \qquad c\in (0,+\infty).
\label{3.3generica}
\end{equation}
Here, in order that condition (\ref{2.5}) in Theorem \ref{teo_1} holds, it is required that the vector ${\bf r}\equiv(r_1,r_2,\ldots,r_n)\in \mathbb{R}^n$ satisfies the following relation: 
\begin{equation}
	{\bf r}\cdot A \cdot {\bf r}^\top-{\bf r} \cdot{\bf m}^\top=0.
	\label{3.4generica}
\end{equation}
Since $A$ is symmetric and positive definite,  the quadratic form 
$q({\bf r})={\bf r} \cdot A\cdot {\bf r}^\top$ is positive for all choices of ${\bf r}$. Thus, 
assumption ${\bf m}\neq {\bf 0}$  ensures that Eq.\ (\ref{3.4generica}) has non-trivial solutions. 
Hence, being ${\bf r}\neq {\bf 0}$ we have that $h_c({\bf x})$ is not a constant. 
\par
Moreover, recalling Feller's theory of boundary classification, the endpoints $-\infty$ and $+\infty$ are both natural and unattainable, for any component $\widehat{X}^k_t$, $k=1,2,\ldots, n$, where $-\infty$ is attractive (nonattractive) if $m_k<0$ ($m_k\geq 0$), whereas 
$+\infty$ is nonattractive (attractive) if $m_k\leq 0$ ($m_k> 0$).
\par
We note that the case considered in Eq.\ (\ref{3.3generica}) is quite different from the special solution proposed in Remark \ref{rem_indip}, even when the process $\widehat{\bf X}_t$ has independent components.
\begin{remark}
Without loss of generality, the function in (\ref{3.3generica}) 
is taken such that $h_0({\bf x})= 1$, ${\bf x}\in \mathbb{R}^n$. 
Moreover, the Lipschitz condition given in Eq.\ \eqref{cond_lipschitz_w} holds.
Specifically, from (\ref{3.3generica}) we can see that 
$$
 \Omega({\bf x})=-\frac{2 c\ {\bf r}\cdot A}{\exp\{2{\bf r}\cdot {\bf x}^\top\}+c},
 \qquad \bf{x} \in\mathbb{R}^n,
$$
satisfies the mentioned condition for $c\in (0,+\infty)$. 
We need to prove that there exists a positive quantity $L_1$ such that 
$\lvert\lvert\Omega({\bf x})-\Omega({\bf y})\lvert\lvert
<L_1 \lvert\lvert{\bf x}-{\bf y}\lvert\lvert$ for all ${\bf x},{\bf y}\in\mathbb{R}^n$, and hence that
$$
|2c|\;\lvert \lvert {\bf r}\cdot A \lvert\lvert \;\left\lvert \frac{1}{\exp\{2{\bf r}\cdot {\bf x}^\top\}+c}-\frac{1}{\exp\{2{\bf r}\cdot {\bf y}^\top\}+c} \right\lvert<L_1 \lvert\lvert{\bf x}-{\bf y}\lvert\lvert.
$$
This is equivalent to show that the gradient
$$
 \nabla\left(\displaystyle \frac{1}{\exp\{2{\bf r}\cdot {\bf x}^\top\}+c}\right)
 = \frac{\exp\{{\bf r}\cdot{\bf x}^\top\}\ {\bf r}}{(\exp\{2{\bf r}\cdot {\bf x}^\top\}+c)^2}
 $$
is bounded. Taking into account that
$$\left\lvert\frac{\exp\{{\bf r}\cdot{\bf x}^\top\}}{(\exp\{2{\bf r}\cdot {\bf x}^\top\}+c)^2}  \right\lvert=\left\lvert \frac{1}{\exp\{2{\bf r}\cdot {\bf x}^\top\}+c}-\frac{c}{(\exp\{2{\bf r}\cdot {\bf x}^\top\}+c)^2}\right\lvert<\frac{2}c,$$
the result thus follows.
\end{remark}
\par
We recall that the case ${\bf m}={\bf 0}$ has been excluded. 
Hence, for $h_c({\bf x})$ given in (\ref{3.3generica}) and ${\bf r}$ satisfying the condition (\ref{3.4generica}), with ${\bf m}\not ={\bf 0}$ and $c\in (0,+\infty)$, due to Eq.\ (\ref{2.5bc}) the infinitesimal moments of the transformed diffusion process ${\bf X}_t$, obtained from the Wiener process with infinitesimal moments (\ref{mom_infinitesimali_y}), are given by
\begin{equation}
\begin{array} {ll}
    b_i({\bf x})=m_i-\displaystyle\frac{2c\, {\bf r}\cdot  a_{*,i}}
		{\exp\bigl\{2\,{\bf r} \cdot{\bf x}^\top\bigr\}+c} \quad
	&(i=1,\ldots,n),
 \\
	 a_{ij}({\bf x})=\widehat{a}_{ij} &(i,j=1,\ldots,n),
\end{array}
\label{mom_infinitesimali_fx}
\end{equation}
where  $ a_{*,i}$ is defined as in (\ref{eq:Ai*A*j}). 
Note that $a_{*,i}= a_{i,*}$ for all $i=1,\ldots,n$ since $A$ is symmetric. 
Further, recalling (\ref{3.2}), using Theorem \ref{teo_1} we obtain the transition p.d.f.\ of ${\bf X}_t$ for the weight function \eqref{3.3generica}.
\begin{remark}
Due to Eqs.\  \eqref{mom_infinitesimali_y} and \eqref{mom_infinitesimali_fx}, 
a bound for the Euclidean distance between the drift vectors of the 
processes $\widehat{{\bf X}}_t$ and ${\bf X}_t$ is
    $$
    \lvert\lvert \widehat{{\bf b}}
    ({\bf x})-{\bf b}
    ({\bf x})\rvert \rvert 
    =2\ \frac{c}{\exp\bigl\{2\,{\bf r} \cdot{\bf x}^\top\bigr\}+c} \ \lvert\lvert {\bf r}\cdot A\rvert \rvert
    <2\ \lvert\lvert {\bf r}\cdot A\rvert \rvert,
    $$
and is independent on $c$.  
\end{remark}
%
\begin{remark} \label{remark_coupling}
When choosing the weight function $h_c({\bf x})$ as in (\ref{3.3generica}), 
the condition given in (\ref{hyperplaneD}) leads to the hyperplane $H_{{\bf y}}=\{{\bf x}\in \mathbb{R}^n :\;{\bf r} \cdot {\bf x}^\top={\bf r} \cdot {\bf y}^\top\}$. 
Clearly, for any fixed initial point ${\bf y}\in\mathbb{R}^n$, for ${\bf r}\neq {\bf 0}$ 
verifying condition (\ref{3.4generica}), the hyperplane $H_{{\bf y}}$ passes through the point ${\bf y}$, so that 
${\bf b}({\bf x})={\bf b}({\bf y})$ $ \forall\, {\bf x}\in H_{{\bf y}}$, due to (\ref{mom_infinitesimali_fx}). 
Moreover, the half-spaces ${\cal H}^-_{{\bf y}}$ and ${\cal H}^+_{{\bf y}}$, introduced in (\ref{spacesD}), are 
defined through the 
conditions ${\bf r} \cdot {\bf x}^\top<{\bf r} \cdot {\bf y}^\top$ and 
${\bf r} \cdot {\bf x}^\top>{\bf r} \cdot {\bf y}^\top$, respectively. 
Hence, in this case the probability of the maximal coupling given in Eq.\ \eqref{maximal_coupling_def}, 
for fixed $t>\tau\geq 0$ and ${\bf y}\in \mathbb{R}^n$ can be expressed as 
\begin{equation} \label{maximal_coupling_w}
\mathsf{P}\big[\widehat{\bf X}_t^{\rm c} ={\bf X}_t^{\rm c}\big]
=\int_{\{{\bf r} \cdot {\bf x}^\top<{\bf r} \cdot {\bf y}^\top\}} {f}({\bf x},t\,|\, {\bf y}, \tau) \ {\rm d}{\bf x}+\int_{\{{\bf r} \cdot {\bf x}^\top\geq{\bf r} \cdot {\bf y}^\top\}} \widehat{f}({\bf x},t\,|\, {\bf y}, \tau) \ {\rm d}{\bf x}.
\end{equation}
\end{remark}
\begin{remark}\label{remark_picchi}
(i) 
The transition p.d.f.\ \eqref{2.1} of the process ${\bf X}_t$, obtained from the $n$-dimensional Wiener process $\widehat{\bf X}_t$ by making use of function $h_c$ as in \eqref{3.3generica}, with ${\bf r}$ satisfying 
assumption \eqref{3.4generica}, can be written as the mixture in (\ref{rel_mixture}) due to Theorem \ref{prop_w}, so that 
    \begin{equation}\label{h_teta_y}
      k({\bf x})= \exp\{-2\ {\bf r}\cdot{\bf x}^\top \}, 
      \qquad 
      \theta_c({\bf y})=\frac{1}{h_c({\bf y})}
      =\frac{1}{1+c\exp\{-2\ {\bf r}\cdot {\bf y}^\top\}},
    \end{equation}
for $c\in (0,+\infty)$, since 
$c_m=-\inf_{{\bf x}\in \mathbb{R}^n} \frac{1}{k({\bf x})}=0$.
The transition p.d.f.\ $\widehat{f}({\bf x},t\,|\,{\bf y},\tau)$ is given in \eqref{3.2}, 
whereas $\widetilde{f}({\bf x}, t\,|\,{\bf y}, \tau)$ is the transition p.d.f.\ of a Wiener process $\widetilde{\bf X}_t$ in $\mathbb{R}^n$, having infinitesimal moments (cf.\ Eq.\ \eqref{mominf_h})
\begin{equation*}
   \widetilde{b}_i({\bf x})
   =m_i-2{\bf r}\cdot  a_{*i},
   \qquad 
   \widetilde{a}_{ij}=a_{ij},
   \qquad i,j=1,\ldots,n.
\end{equation*}
(ii) 
Since $\widehat{f}({\bf x},t\,|\,{\bf y},\tau)$ and $\widetilde{f}({\bf x}, t\,|\,{\bf y}, \tau)$ are both Gaussian densities, they have an unique maximum point in 
\begin{equation*}
 {\bf \widehat{x}}_{M}={\bf y}+(t-\tau)\,{\bf m},
\qquad
\widetilde{{\bf x}}_{M}={\bf y}+(t-\tau)({\bf m}-2\,{\bf r} \cdot A)
=\widehat{{\bf x}}_{M}-(t-\tau)\,2\,{\bf r} \cdot A,
\end{equation*}
respectively, and thus 
$\widehat{f}({\bf \widehat{x}}_{M},t\,|\,{\bf y},\tau)=\widetilde{f}(\widetilde{{\bf x}}_{M}, t\,|\,{\bf y}, \tau)
=  \bigl[2\pi(t-\tau)\bigr]^{-n/2} \Delta^{-1/2}$.    
It follows that the transition p.d.f.\ $f({\bf x},t\,|\,{\bf y},\tau)$, due to (\ref{2.1}), can have two maxima, at most. The signs of ${\bf x}-{\bf y}$ and ${\bf m}$ play an essential role in the determination of the mode(s), which can be accomplished via numerical methods due to the transcendency of the relevant equations. 
Also, numerical evaluations may show that for small (large) $t-\tau$, the transition p.d.f.\ of ${\bf X}_t$ exhibits one (two) mode(s), in general. A similar behavior is exhibited by transformed Skellam processes (See Di Crescenzo and Martinucci \cite{Di_Crescenzo2009}). 
\\
(iii) Recalling (\ref{rel_mixture}) and case (iii) of Remark \ref{rem:mixt}, the transition p.d.f.\ $f({\bf x},t\,|\,{\bf y},\tau)$ is 
expressed as a mixture with equal weights if and only if $\theta_c({\bf y})=1/2$, i.e.\ for 
\begin{equation}
 c=\exp\{2\ {\bf r}\cdot{\bf y}^\top \},
\label{eq:condsuc}
\end{equation}
due to \eqref{3.3generica}.   
The values of $f({\bf x},t\,|\,{\bf y},\tau)$ in the maximum points of $\widehat{f}$ and $\widetilde{f}$ are given respectively by 
$$
    f({\bf \widehat{x}}_M,t\,|\,{\bf y},\tau)
    =\frac{\theta_c({\bf y})\,e^{-2 (t-\tau){\bf r}\cdot{\bf m}^\top}+(1-\theta_c({\bf y}))}{{\bigl[2\pi(t-\tau)\bigr]^{n/2}
		\Delta^{1/2}}},
        \qquad
    f(\widetilde{{\bf x}}_M,t\,|\,{\bf y},\tau)
    = \frac{\theta_c({\bf y})+(1-\theta_c({\bf y}))\,e^{-2 (t-\tau){\bf r}\cdot{\bf m}^\top}}{{\bigl[2\pi(t-\tau)\bigr]^{n/2}
		\Delta^{1/2}}}, 
$$
and clearly they are identical when condition (\ref{eq:condsuc}) holds. 
Moreover, if condition (\ref{eq:condsuc}) is fulfilled, from Eq.\ (\ref{mom_infinitesimali_fx}) 
one obtains that the drift vector of ${\bf X}_t$ evaluated at the initial state ${\bf y}$ is constant, i.e.  
\begin{equation}
    {\bf b}({\bf y})={\bf m}- {\bf r}\cdot A.
    \label{eq:driftWc1/2}
\end{equation}
\end{remark}
\begin{remark}\label{rem:r2choices}
Further suitable choices of ${\bf r}$ that satisfy the condition  in Eq\. (\ref{3.4generica}) are given by:
\\
(i) ${\bf r}={\bf m} \cdot A^{-1}$,
\\
(ii) ${\bf r}= \frac{m_i}{a_{ii}}\cdot {\bf e}_i$ 
for any $i=1,\ldots,n$, where ${\bf e}_i$ is the unit vector with respect to the $i$-th coordinate.
\par
These cases will be treated in more detail in Section \ref{wiener_bidim} in the two-dimensional case. 
Moreover, if $c$ is taken as in (\ref{eq:condsuc}) then from Eq.\ (\ref{eq:driftWc1/2}) we have:
\\
(i) if ${\bf r}={\bf m} \cdot A^{-1}$, then ${\bf b}({\bf y})={\bf 0}$; 
\\
(ii) if ${\bf r}= \frac{m_i}{ a_{ii}}\cdot {\bf e}_i$ 
with $i=1,\ldots,n$, then $b_i({\bf y})=0$ and 
$b_j({\bf y})= m_j- {m_i}\,\frac{ a_{ij}}{a_{ii}}$ for $j\neq i$. 
\end{remark}
\par
As done in Section \ref{sect:usord}, let us now present a result on the usual stochastic ordering between $\widehat{\bf X}_t$ and ${\bf X}_t$ for the marginals of the diffusion processes considered in this section. 
Hereafter, the inequality between vectors is taken component-wise. 
\begin{proposition}\label{prop_W_ord}
Consider the $n$-dimensional diffusion processes whose infinitesimal moments are described in Eqs.\ \eqref{mom_infinitesimali_y} and \eqref{mom_infinitesimali_fx}. 
If $a_{ij}\geq 0$ for all $1\leq i,j\leq n$ and ${\bf r} \cdot {\bf x}^\top$ is decreasing for ${\bf x}\in \mathbb{R}^n$, then $\widehat{\bf X}_t\leq_{\rm st}{\bf X}_t$ for all $t\geq 0$.
\end{proposition}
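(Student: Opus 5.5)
The plan is to reduce the statement to Proposition \ref{prop_ord}, whose two hypotheses are that $\widehat{\bf X}_t$ is positively associated for all $t\geq 0$ and that the weight $h_c$ is componentwise increasing on $\mathbb{R}^n$. Once both are checked for the Wiener setting of Eqs.\ \eqref{mom_infinitesimali_y} and \eqref{mom_infinitesimali_fx}, the ordering $\widehat{\bf X}_t\leq_{\rm st}{\bf X}_t$ follows immediately. The assumptions of Theorem \ref{teo_1} are already in force: $h_c$ in \eqref{3.3generica} is strictly positive and $C^2$, \eqref{2.5} holds by \eqref{3.4generica}, and the Lipschitz property of $\Omega$ was verified in the preceding remark.

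\emph{Monotonicity of $h_c$.} Since ${\bf r}\cdot{\bf x}^\top=\sum_i r_i x_i$ is decreasing componentwise, every $r_i\leq 0$. Then $-2\,{\bf r}\cdot{\bf x}^\top$ is increasing in each $x_i$, and so is $\exp\{-2\,{\bf r}\cdot{\bf x}^\top\}$. With $c>0$, $h_c({\bf x})=1+c\exp\{-2\,{\bf r}\cdot{\bf x}^\top\}$ is increasing in ${\cal D}=\mathbb{R}^n$. Equivalently, $\partial_{x_i}h_c=-2c\,r_i\exp\{-2\,{\bf r}\cdot{\bf x}^\top\}\geq 0$. By \eqref{2.1}, the ratio $f/\widehat f=h_c({\bf x})/h_c({\bf y})$ is then increasing in ${\bf x}$, which is what Theorem 6.B.8 of \cite{Shaked} needs.

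\emph{Positive association of $\widehat{\bf X}_t$.} By \eqref{3.2}, $\widehat{\bf X}_t$ given $\widehat{\bf X}_\tau={\bf y}$ is Gaussian with covariance $(t-\tau)A$. For $a_{ij}\geq 0$ all covariances are nonnegative. By Pitt's theorem, a multivariate normal vector is associated if and only if its covariances are all nonnegative, so $\widehat{\bf X}_t$ is positively associated for every $t>\tau$. At $t=\tau$ it is degenerate at ${\bf y}$, which is trivially associated, and the ordering is then an equality.

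The main obstacle is this association step. It is the only genuinely external input, and it must be stated for the conditional marginal (fixed initial state ${\bf y}$) to match how the ratio condition is used in the proof of Proposition \ref{prop_ord}. I would cite Pitt (1982) for the characterization of Gaussian association. A self-contained fallback for $n=2$ uses the FKG/TP$_2$ property of the bivariate normal density when the correlation is nonnegative. Finally, I would note that condition \eqref{3.4generica} is compatible with $r_i\leq 0$ for all $i$ only when ${\bf r}\cdot{\bf m}^\top={\bf r}\cdot A\cdot{\bf r}^\top>0$. This is a consistency check on the hypotheses rather than part of the argument.
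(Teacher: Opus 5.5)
Your proposal is correct and follows essentially the same route as the paper's proof: positive association of the Gaussian marginal $\widehat{\bf X}_t$ via Pitt's theorem, monotonicity of $h_c$ from $r_i\leq 0$ and $c>0$, and then Proposition \ref{prop_ord}. Your statement of Pitt's condition through the nonnegative covariance entries $(t-\tau)a_{ij}\geq 0$ is the accurate one. The paper's proof writes $\sigma_{ij}\geq 0$ instead, but nonnegativity of the $a_{ij}$ is what is assumed and what Pitt's theorem actually uses.
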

\begin{proof}
From the theorem given in Pitt \cite{Pitt}, $\widehat{\bf X}_t$ is positively associated for all $t\geq 0$ thanks to the assumption that $\sigma_{ij}\geq 0$ 
for all $1\leq i,j\leq n$. Moreover, it follows from Eq.\ (\ref{3.3generica}) that $h_c({\bf x})$ is increasing for ${\bf x}\in \mathbb{R}^n$ if and only if ${\bf r} \cdot {\bf x}^\top$ is decreasing for ${\bf x}\in \mathbb{R}^n$. 
The thesis thus follows from Proposition \ref{prop_ord}.  
\end{proof}
\par
Let us now analyze an application of the results given in Section \ref{subs:generalreset}. We assume that an $n$-dimensional Wiener process $\widehat{\bf X}^{\rm R}_t$ is subject to resets to a known state ${\bf y}^{\rm R}\in \mathcal{D}$ modeled after the occurrences of a time-homogeneous Poisson process $N_t$. Indeed, the corresponding transition p.d.f. is easily obtained by taking Eq.\ \eqref{densita_f_r}, choosing $\Lambda(t)=\lambda t$ with $\lambda>0$ and making use of \eqref{3.2}. Moreover, as shown in Proposition \ref{prop_reset}, one can obtain a transformed resetting process ${\bf X}^{\rm R}_t$ by means of any weight function $h_c({\bf x})$ in \eqref{w_c}, whose p.d.f.\ can be recovered by Eq.\ \eqref{rel_f}.
The following result yields a closed expression for the stationary p.d.f.\  $f({\bf x}\,|\,{\bf y}^R)$ of ${\bf X}^{\rm R}_t$, whose general form is given in \eqref{dens_stazionaria_trasf}. 
\begin{proposition}\label{propWienerreset}
     Let $\widehat{\bf X}_t^{\rm R}$ and ${\bf X}_t^{\rm R}$ be the $n$-dimensional diffusion processes introduced in Proposition \ref{prop_reset}, whose transition p.d.f.'s satisfy the relation in \eqref{rel_f}. If the intensity of the resetting processes $N_t$ and $N_t'$ is a positive constant $\lambda$, then the stationary p.d.f.\ of ${\bf X}_t^{\rm R}$ is given by
     \begin{align}
        f({\bf x}\,|\,{\bf y}^R)
        & =\displaystyle \frac{h_c({\bf x})}{h_c({\bf y}^{\rm R})}\,
        \frac{2\lambda e^{\Phi\left({\bf x,y}^{\rm R}\right)}}{(2\pi)^{n/2} {\Delta^{1/2}}}  
        \left(\frac{\Xi\left({\bf x,y}^{\rm R}\right)}
        {\lambda+\frac{1}{2}{\bf m}\cdot A^{-1}\cdot{\bf m}^{\top}}\right)^{\frac{2-n}{4}}
        \nonumber
        \\
        & \times\,  K_{1-\frac{n}{2}}\left(2
        \left(\Xi\left({\bf x,y}^{\rm R}\right) 
        \left(\lambda+\frac{1}{2}{\bf m}\cdot A^{-1}\cdot{\bf m}^{\top}\right)\right)^{1/2}\right),
     \label{staz_wiener}
\end{align}
     where the weight function $h_c$ is explicited in \eqref{3.3generica}, 
     $\Delta=| A|$, ${A^{-1}}:=(\gamma_{ij})$,
     \begin{align*}
         \Phi\left({\bf x,y}^{\rm R}\right)
         &=\sum_{i=1}^n\gamma_{ii}\; m_i(x_i-y_i^{\rm R})
         +\sum_{i,j=1}^n\gamma_{ij}\left[ m_j (x_i-y^{\rm R}_i)+ m_i (x_j-y^{\rm R}_j)\right], \\ 
         \Xi\left({\bf x,y}^{\rm R}\right)
         &=\frac{1}{2} \, ({\bf x}-{\bf y}^{\rm R})\cdot A^{-1}  \cdot({\bf x}-{\bf y}^{\rm R})^{\top},
     \end{align*}
     and $K_{h}$ denotes the  modified second-kind Bessel function of index $h$ 
     (see, for instance, Gardiner \cite{Gardiner}). 
\end{proposition}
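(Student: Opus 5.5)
By Proposition \ref{prop_reset_2}, the stationary density of ${\bf X}_t^{\rm R}$ equals $h_c({\bf x})/h_c({\bf y}^{\rm R})$ times the stationary density of the reset Wiener process $\widehat{\bf X}_t^{\rm R}$. So the task reduces to computing
$$
\widehat f^{\rm R}({\bf x}\,|\,{\bf y}^{\rm R})=\int_0^\infty \lambda e^{-\lambda\theta}\,\widehat f({\bf x},\theta\,|\,{\bf y}^{\rm R})\,{\rm d}\theta
$$
in closed form, where $\widehat f$ is the Gaussian density (\ref{3.2}) with mean ${\bf y}^{\rm R}+\theta{\bf m}$ and covariance $\theta A$. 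This reduction via Propositions \ref{prop_reset} and \ref{prop_reset_2}, with $\Lambda(t)=\lambda t$, is the first step and is immediate.

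Next, I will expand the Gaussian exponent. Write ${\bf z}={\bf x}-{\bf y}^{\rm R}$. Then
$$
-\frac{1}{2\theta}({\bf z}-\theta{\bf m})\cdot A^{-1}\cdot({\bf z}-\theta{\bf m})^\top
=-\frac{\Xi}{\theta}+{\bf m}\cdot A^{-1}\cdot{\bf z}^\top-\frac{\theta}{2}\,{\bf m}\cdot A^{-1}\cdot{\bf m}^\top .
$$
The middle term is independent of $\theta$ and factors out as an exponential prefactor. It should be matched with $e^{\Phi}$ after writing ${\bf m}\cdot A^{-1}\cdot{\bf z}^\top$ in coordinates using the symmetry $\gamma_{ij}=\gamma_{ji}$.

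The remaining integral is
$$
\frac{\lambda}{(2\pi)^{n/2}\Delta^{1/2}}\int_0^\infty \theta^{-n/2}\exp\Big\{-\frac{\Xi}{\theta}-\beta\theta\Big\}\,{\rm d}\theta,
\qquad \beta=\lambda+\tfrac12{\bf m}\cdot A^{-1}\cdot{\bf m}^\top .
$$
For this I will use the standard integral representation
$$
\int_0^\infty \theta^{\nu-1}e^{-a/\theta-b\theta}\,{\rm d}\theta=2\,(a/b)^{\nu/2}K_\nu\big(2\sqrt{ab}\big),
$$
with $\nu=1-n/2$, $a=\Xi$, $b=\beta$. This produces exactly the factor $2\lambda\,(\Xi/\beta)^{(2-n)/4}K_{1-n/2}\big(2\sqrt{\Xi\beta}\big)$ in (\ref{staz_wiener}). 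The integral converges for ${\bf x}\neq{\bf y}^{\rm R}$, since $\Xi>0$ by positive definiteness of $A^{-1}$ and $\beta>0$. Multiplying by $h_c({\bf x})/h_c({\bf y}^{\rm R})$ then gives the claim.

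The main obstacle is bookkeeping rather than analysis: checking that the linear term ${\bf m}\cdot A^{-1}\cdot{\bf z}^\top$ agrees exactly with the stated $\Phi$. The stated double sum appears to count the diagonal and symmetric contributions with particular multiplicities, so I will expand both expressions carefully and reconcile any factor-of-two conventions there. A secondary point is justifying the interchange of the limit $t\to\infty$ with the integral in (\ref{densita_f_r}). This follows by dominated convergence, since for fixed ${\bf x}\neq{\bf y}^{\rm R}$ the Gaussian density is bounded uniformly in $\theta$.
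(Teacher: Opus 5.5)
Your proposal is correct and follows the paper's proof essentially step for step: reduce to the reset Wiener process via Proposition~\ref{prop_reset_2}, complete the square in the Gaussian exponent to extract $e^{\Phi}$ and leave $\int_0^\infty\theta^{-n/2}e^{-\Xi/\theta-\beta\theta}\,{\rm d}\theta$, and then apply the standard Gradshteyn--Ryzhik integral to obtain $2(\Xi/\beta)^{(2-n)/4}K_{1-n/2}(2\sqrt{\Xi\beta})$. On the bookkeeping point you flag, your derivation gives $\Phi={\bf m}\cdot A^{-1}\cdot({\bf x}-{\bf y}^{\rm R})^\top$, which matches the stated $\Phi$ only if its second sum runs over $1\le i<j\le n$ (read literally over all $i,j$ it equals $2\,{\bf m}\cdot A^{-1}\cdot({\bf x}-{\bf y}^{\rm R})^\top+\sum_i\gamma_{ii}m_i(x_i-y_i^{\rm R})$), so this is an indexing slip in the statement rather than a gap in your argument.
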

\begin{proof}
Making use of Eqs.\ \eqref{dens_stazionaria} and \eqref{3.2}, and recalling the relation among the stationary p.d.f.'s of $\widehat{\bf X}_t^{\rm R}$ and ${\bf X}_t^{\rm R}$ highlighted in Proposition \ref{prop_reset_2}, one has  %
 \begin{align*}
     f({\bf x}\,|\,{\bf y}^{\rm R})&=\frac{h_c(\bf x)}{h_c({\bf y}^{\rm R})}
     \lim_{t \to \infty} \widehat{f}(x,t\,|\,{\bf y}^{\rm R},\tau) 
     =\frac{h_c(\bf x)}{h_c({\bf y}^{\rm R})}\,\frac{\lambda}{(2\pi)^{n/2}{\Delta^{1/2}}} 
     \nonumber \\
     &\times\int_0^{\infty} \theta^{-n/2}\;\exp\left\{-\lambda\theta-\frac{({\bf x}-{\bf y}-\theta\;{\bf{m}})\cdot A^{-1}\cdot({\bf x}-{\bf y}-\theta\;{\bf{m}})^{\top}}{2\theta}\right\}\;{\rm d}\theta,
 \end{align*}
 where $\theta:=t-\tau$. Through algebraic manipulations, the previous expression becomes
 \begin{align*}
     f({\bf x}\,|\,{\bf y}^{\rm R})&=
     \frac{h_c(\bf x)}{h_c({\bf y}^{\rm R})}\,
     \frac{\lambda\, e^{ \Phi\left({\bf x,y}^{\rm R}\right)}}{(2\pi)^{n/2}{\Delta^{1/2}}} 
     \int_0^{\infty} \theta^{-n/2}\;\exp\left\{-\frac{1}{\theta}\,\Xi\left({\bf x,y}^{\rm R}\right)-\theta\left[\lambda+\frac{1}{2}{\bf m}\cdot A^{-1}\cdot{\bf m}^{\top}\right]\right\}{\rm d}\theta.
 \end{align*}
 The result in \eqref{staz_wiener} thus follows by making use of Eq.\  ET II 82(23)a of Gradshteyn and Ryzhik \cite{Gradstein} with a suitable choice of the parameters.
\end{proof}
We note that the stationary p.d.f.\ in Eq.\ \eqref{staz_wiener} is an extension of the result presented in Eq.\ (27) of Evans and Majumdar \cite{Evans} about the stationary solution of a standard $d$-dimensional Wiener process 
subject to resets occurring with constant intensity $r$. 
\par
We conclude this section by investigating the representation of the process ${\bf X}_t$ in the presence of a potential $U({\bf x})$, as seen in Eq.\ (\ref{NEWsde_x_trasf}). 
Here, and in the following, $\log$ denotes the natural logarithm. 
%
\begin{theorem}\label{theor_pot_wiener}
The diffusion process having infinitesimal moments (\ref{mom_infinitesimali_fx}) satisfies the SDE (\ref{NEWsde_x_trasf}) with potential 
   \begin{equation}
   U({\bf x})=-{\bf m}\cdot {\bf x}^\top-\frac{\log(h_c({\bf x}))}{r_1}\ {\bf r}\cdot  a_{*,1} +k, 
   \qquad {\bf x} \in \mathbb{R}^n,\;\; k \in \mathbb{R},
   \label{potential_wiener_n}
   \end{equation}
with $h_c({\bf x})$ given in (\ref{3.3generica}), provided that ${\bf r}$
satisfies  condition (\ref{3.4generica}), such that 
${\bf r}\neq {\bf 0}$ and 
   \begin{equation}
       r_1\ {\bf r}\cdot A-\left({\bf r}\cdot  {a}_{*,1}\right) \  {\bf r}={\bf 0}.
      \label{cond_pot}
   \end{equation}
\end{theorem}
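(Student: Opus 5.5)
We will verify the statement by direct differentiation: show that $-\nabla U({\bf x})$ coincides with the drift vector ${\bf b}({\bf x})$ in (\ref{mom_infinitesimali_fx}). The diffusion matrix of ${\bf X}_t$ is the constant matrix $A$, so the SDE (\ref{NEWsde_x_trasf}) with $\Sigma$ such that $\Sigma\Sigma^\top=A$ is the right form. Identifying the process therefore reduces to the identity ${\bf b}=-\nabla U$.

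First, compute the gradient of $\log h_c$. From (\ref{3.3generica}) we have $\partial_{x_i}h_c({\bf x})=-2c\,r_i\exp\{-2{\bf r}\cdot{\bf x}^\top\}$, and hence
$$
\partial_{x_i}\log h_c({\bf x})=-\frac{2c\,r_i}{\exp\{2{\bf r}\cdot{\bf x}^\top\}+c}.
$$
Differentiating (\ref{potential_wiener_n}) then gives
$$
-\partial_{x_i}U({\bf x})=m_i-\frac{2c\,r_i\,({\bf r}\cdot a_{*,1})}{r_1\,(\exp\{2{\bf r}\cdot{\bf x}^\top\}+c)}.
$$
We need $r_1\neq 0$ for the expression in (\ref{potential_wiener_n}) to make sense. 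This follows from (\ref{cond_pot}): if $r_1=0$, then (\ref{cond_pot}) forces $({\bf r}\cdot a_{*,1})\,{\bf r}={\bf 0}$. Since ${\bf r}\neq{\bf 0}$, this gives ${\bf r}\cdot a_{*,1}=0$, so the potential's second term is degenerate. We note this as an implicit assumption of the statement and take $r_1\neq 0$.

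Comparing the displayed expression with $b_i({\bf x})$ in (\ref{mom_infinitesimali_fx}), equality for all ${\bf x}$ and each $i$ holds if and only if
$$
\frac{r_i\,({\bf r}\cdot a_{*,1})}{r_1}={\bf r}\cdot a_{*,i},\qquad i=1,\ldots,n.
$$
Multiplying by $r_1$ gives $r_1\,({\bf r}\cdot a_{*,i})-({\bf r}\cdot a_{*,1})\,r_i=0$. Since $A$ is symmetric, ${\bf r}\cdot a_{*,i}$ is the $i$-th component of ${\bf r}\cdot A$, so these $n$ equations are exactly the componentwise form of (\ref{cond_pot}). The case $i=1$ holds trivially.

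Condition (\ref{3.4generica}) is needed only so that $h_c$ is harmonic; by Theorem \ref{teo_1} this makes (\ref{mom_infinitesimali_fx}) the infinitesimal moments of a well-defined transformed process with the product-form density. The additive constant $k$ is irrelevant because it disappears under differentiation.

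The main subtlety is conceptual rather than computational. A drift field is a gradient only if its Jacobian is symmetric. Here the Jacobian of ${\bf b}$ is a scalar function times ${\bf r}^\top({\bf r}\cdot A)$, and this matrix is symmetric precisely when ${\bf r}\cdot A$ is parallel to ${\bf r}$, which is what (\ref{cond_pot}) encodes. It is worth remarking this, noting also that choice (i) of Remark \ref{rem:r2choices}, ${\bf r}={\bf m}\cdot A^{-1}$, generally does not satisfy it.
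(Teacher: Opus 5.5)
Your proof is correct, but it runs in the opposite direction from the paper's. The paper proceeds constructively. It checks that the 1-form $\omega=-\sum_i b_i\,{\rm d}x_i$ is closed, i.e.\ $\partial_{x_k}b_i=\partial_{x_i}b_k$. It notes that the case $i=1$ is \eqref{cond_pot} and asserts that the other cases \eqref{cond_poti} are equivalent to it. It then builds $U$ by integrating the first component of $\nabla U$ in $x_1$ and fixing the successive integration functions by matching the remaining components, invoking \eqref{cond_pot} at each step; it also remarks that starting from $x_j$ gives the same $U$ with $r_j$ in place of $r_1$.

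You instead take the candidate $U$ as given and verify $-\nabla U={\bf b}$ directly. This collapses at once to the componentwise identities $r_1\,({\bf r}\cdot a_{*,i})=({\bf r}\cdot a_{*,1})\,r_i$, i.e.\ \eqref{cond_pot}. Your route is shorter and shows that \eqref{cond_pot} is necessary as well as sufficient for this particular $U$. The paper's route explains where the formula comes from. Its closedness test, which you recover in your Jacobian remark, shows that no potential of any kind exists unless ${\bf r}\cdot A$ is parallel to ${\bf r}$.

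One sentence needs fixing: $r_1\neq 0$ does \emph{not} follow from \eqref{cond_pot}. As your own computation shows, for $r_1=0$ that condition reduces to ${\bf r}\cdot a_{*,1}=0$, which can hold. So state $r_1\neq 0$ plainly as an implicit hypothesis; the paper also divides by $r_1$ without comment.

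The hypothesis is needed for more than making the formula meaningful. When $r_1=0$, \eqref{cond_pot} no longer forces ${\bf r}$ to be an eigenvector of $A$, so the drift need not be a gradient at all. For example, take $n=3$, $A$ block-diagonal with blocks $1$ and $\begin{bmatrix}2&1\\1&2\end{bmatrix}$, ${\bf m}=(0,2,0)$ and ${\bf r}=(0,1,0)$. Then \eqref{3.4generica} and \eqref{cond_pot} both hold, yet ${\bf r}\cdot A=(0,2,1)$ is not parallel to ${\bf r}$. The paper's claimed equivalence of \eqref{cond_poti} with \eqref{cond_pot} also breaks down in this case. With $r_1\neq 0$ made explicit, your proof is complete.
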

\begin{proof}
Let us consider the differential 1-form defined by $\omega({\bf x}):=-\sum_{i=1}^n b_i({\bf x})\ {\rm d}x_i$, where the first order infinitesimal moments are given in \eqref{mom_infinitesimali_fx}. 
To obtain the potential $U({\bf x})$, we see that $\omega({\bf x})$ is an exact differential form. Indeed, the condition $\partial_{x_k}b_i({\bf x})=\partial_{x_i}b_k({\bf x})$ is satisfied for all $i<k$. For $i=1$ one obtains Eq.\ (\ref{cond_pot}), and for $i=2,\ldots,n$ the resulting equations 
   \begin{equation}
       r_i\ {\bf r}\cdot A-\left({\bf r}\cdot  {a}_{*,i}\right) \  {\bf r}={\bf 0} 
      \label{cond_poti}
   \end{equation}
are equivalent to (\ref{cond_pot}).
Furthermore, since the potential is a primitive of the differential form $\omega$, we compute $U({\bf x})$ such that:
\begin{equation} \label{syst_pot}
    \nabla U=-{\bf m}+\frac{2c}{1+c\, e^{2 {\bf r}\cdot{\bf x}^\top}}\,{\bf r}\cdot A.
\end{equation}
By integrating the first element of $\nabla U$ with respect to $x_1$ one has:
\begin{equation}
  U({\bf x})=-m_1 x_1- \frac{1}{r_1} \, \log(1+c\,e^{-2 {\bf r}\cdot{\bf x}^\top})\ 
  {\bf r}\cdot  {a}_{*,1}+\gamma_1(x_{2..n}),
  \label{proof_pot1}
\end{equation}
where $\gamma_1(x_{2..n})$ is an arbitrary function of $x_2,x_3,\ldots,x_n$. By equating the derivative of  (\ref{proof_pot1}) with respect to $x_2$ to the corresponding element of the gradient in (\ref{syst_pot}), we obtain:
$$
\partial_{x_2}\gamma_1(x_{2..n})
=-m_2+\frac{2ce^{-2 {\bf r}\cdot{\bf x}^\top}}{1+ce^{-2 {\bf r}\cdot{\bf x}^\top}}
\left({\bf r}\cdot  {a}_{*,2}-\frac{r_2}{r_1}\ {\bf r}\cdot {a}_{*,1}\right).
$$
The right-hand side of the above equation depends only on $x_2,x_3,\ldots,x_n$ if and only if 
\begin{equation}\label{ass1_pot}
{\bf r}\cdot  {a}_{*,2}-\frac{r_2}{r_1}\ {\bf r}\cdot  {a}_{*,1}=0,
\end{equation}
so that
$$
\gamma_1(x_{2..n})=-m_2x_2+\gamma_2(x_{3..n}), 
$$
where $\gamma_2(x_{3..n})$ is an arbitrary function of $x_3,x_4,\ldots,x_n$ and thus 
\begin{equation*}
  U({\bf x})=-m_1 x_1-m_2 x_2-\frac{1}{r_1}\,\log(1+c\,e^{-2 {\bf r}\cdot{\bf x}^\top}) \ 
  {\bf r}\cdot  {a}_{*,1}+\gamma_2(x_{3..n}).
\end{equation*}
However, condition \eqref{ass1_pot} is verified because of Eq.\ \eqref{cond_pot}. By applying the same procedure again, for the $k$-th element of the gradient in (\ref{syst_pot}), for $k>2$, one obtains the expression of the potential in (\ref{potential_wiener_n}) by using \eqref{cond_pot}. 
Finally,  we note that $U({\bf x})$ can be recovered through identical steps if one begins by integrating the $j$-th element of the gradient in \eqref{syst_pot} with respect to $x_j$, for $j=2,\ldots,n$. Thus, by making use of \eqref{cond_poti} for $i=j$, the potential function will have the following expression:
$$  
 U({\bf x})=-{\bf m}\cdot {\bf x}^\top-\frac{1}{r_j}\, 
 \log(h_c({\bf x}))\,{\bf r}\cdot  {a}_{*,j}+k, 
 \qquad {\bf x} \in \mathbb{R}^n,\;\; k \in \mathbb{R}, \;\; j\in\{2,\ldots,n\},
$$
which is equivalent to \eqref{potential_wiener_n} for any $j$.
\end{proof}   
%
%
%

\subsection{Transformed Ornstein--Uhlenbeck process}\label{sect:OUtransf}
Let $\widehat{\bf X}_t$ be the $n$--dimensional OU process defined 
in ${\cal D}\equiv \mathbb{R}^n$, with infinitesimal moments 
\begin{equation}
\begin{array}{ll} 
	\widehat{b}_i({\bf x})=\eta\,x_i & (i=1,\ldots,n), 
    \\
	\widehat{a}_{ij}({\bf x})=a_{ij}
    & (i,j=1,\ldots,n), 
\end{array}
\label{mom_infinitesimali_OU}
\end{equation}
where $\eta>0$, and where 
$a_{ij}\in\mathbb{R}$ $(i,j=1,\ldots,n\,;i\not= j)$, $a_{ii}>0$ $(i=1,\ldots,n)$, with $\Delta:=|A|>0$, 
the matrix  $A$ being symmetric and positive definite. 
For all $t>\tau\geq 0$ and ${\bf x}, {\bf y} \in \mathbb{R}^n$, the transition p.d.f.\ of $\widehat{\bf X}_t$ is given by: 
\begin{equation}
\widehat{f}({\bf x},t\,|\,{\bf y},\tau)=
{\cal N} \Big({\bf y}e^{\eta\,(t-\tau)}, \left(e^{2\eta\,(t-\tau)}-1\right) A \Big).
    \label{3.12}
\end{equation}
%
%
We consider the family of positive functions $h_c({\bf x}) \in C^2(\mathbb{R}^n)$ defined by
\begin{equation}
h_c({\bf x})=1+c\,{\rm Erf}\bigl\{{\bf r}\cdot{\bf x}^\top\bigr\},
\qquad  
c \in (-1,0)\cup (0,1),
\label{3.13}
\end{equation}
where ${\rm Erf}(x)={2\over\sqrt{\pi}}\int_0^x\exp\{-z^2\}\ {\rm d}z$ denotes the error function, so the condition \eqref{2.5} of Theorem \ref{teo_1} is satisfied provided that  ${\bf r}\equiv(r_1,r_2,\ldots,r_n)$ is such that  
\begin{equation}
{\bf r}\cdot A \cdot{\bf r}^\top-\eta=0.
\label{3.15}
\end{equation}
Condition $\eta>0$ and the hypothesis on $A$ ensure that Eq.\ \eqref{3.15} certainly admits solutions for ${\bf r}$ in $\mathbb{R}^n$. 
\par
We remark also that assumption $\eta>0$ defines a diffusion process that does not show the usual mean-reverting behavior of OU processes. In this case, the process tends to drift away from ${\bf 0}$, which becomes an unstable equilibrium point, so that $\widehat{\bf X}_t$ can be  viewed as an ``explosive'' OU process. Note that the SDE defining the OU process still holds when $\eta>0$ (see Maller et al.\ \cite{maller}), but the process $\widehat{{\bf X}}_t$ is not stationary. 
Moreover, from the Feller's theory of boundary classification, we can state that the endpoints $-\infty$ and $+\infty$ are both natural, attractive, and unattainable, for any component $\widehat{{X}}^k_t$, $k=1,2,\ldots, n$. \par
We note that, in the case $\rho=0$, i.e.\ when the components of the process are independent, another possible choice of the weight function can be obtained from Remark \ref{rem_indip}. By this procedure, we obtain a different expression than Eq.\ (\ref{3.13}). 
\begin{remark}
The function $h_c({\bf x})$ satisfies the Lipschitz condition in Eq.\ \eqref{cond_lipschitz_w}.
Indeed, from Eq.\ \eqref{3.13} we can see that the mentioned condition is satisfied by 
$$
\Omega({\bf x})
=\frac{2c}{{\pi^{1/2}}}\frac{\exp\{-({\bf r}\cdot {\bf x}^\top)^2\}\ {\bf r}\cdot A}{1+c \ {\rm Erf}\{{\bf r}\cdot {\bf x}^\top\}}, 
\qquad {\bf x} \in \mathbb{R}^n.
$$
Hence, we need to prove that, for a certain $L_2>0$, one has $\lvert\lvert\Omega({\bf x})-\Omega({\bf y})\lvert\lvert<L_2 \lvert\lvert{\bf x}-{\bf y}\lvert\lvert$ 
for all ${\bf x},{\bf y} \in \mathbb{R}^n$. 
In this case, this inequality becomes
$$
 \Bigg\lvert\frac{2c}{{\pi^{1/2}}}\Bigg\lvert\ \lvert \lvert {\bf r}\cdot A\lvert\lvert 
 \left\lvert \frac{\exp\{-({\bf r}\cdot {\bf x}^\top)^2\}}{1+c \ {\rm Erf}\{{\bf r}\cdot {\bf x}^\top\}} \right\lvert
 <L_2 \lvert\lvert{\bf x}-{\bf y}\lvert\lvert,
$$
which is equivalent to show that the gradient
$$
 \nabla\left(\displaystyle \frac{\exp\{-({\bf r}\cdot {\bf x}^\top)^2\}}{1+c \ {\rm Erf}\{{\bf r}\cdot {\bf x}^\top\}} \right)
 = -2\left[\frac{c}{{\pi^{1/2}}}\frac{\exp\{-2({\bf r}\cdot{\bf x}^\top)^2\}}{(1+c \ {\rm Erf}\{{\bf r}\cdot {\bf x}^\top\})^2}
 +\frac{\exp\{-({\bf r}\cdot{\bf x}^\top)^2\}\ {\bf r}\cdot{\bf x}^\top}{1+c \ {\rm Erf}\{{\bf r}\cdot {\bf x}^\top\}}\right]\ {\bf r}
$$
is bounded. Taking into account that
$\displaystyle\frac{1}{1+|c|}<\frac{1}{1+c\ {\rm Erf}\{{\bf r}\cdot {\bf x}^\top\}}<\frac{1}{1-|c|}$ always holds when $|c|<1$, 
the bound is easily obtained by recalling that $\exp\{-({\bf r}\cdot{\bf x}^\top)^2\}$ is also bounded.
\end{remark}
\par
Recalling Theorem \ref{teo_1}, the infinitesimal moments of the transformed OU process ${\bf X}_t$ are given by
\begin{equation} \label{drift_OU}
\begin{array}{ll}
   b_i({\bf x})
   =\eta x_i  -\displaystyle{\frac{2}{{\pi^{1/2}}}}
   \frac{c\,\exp\{-({\bf r}\cdot{\bf x}^\top)^2\}}{1+c\,{\rm Erf} \{{\bf r}\cdot {\bf x}^\top\}}\ {\bf r}\cdot a_{*,i} 
&(i=1,\ldots,n),   \\
    a_{ij}({\bf x})=\widehat{a}_{ij}  &(i,j=1,\ldots,n).
\end{array}
\end{equation}
%
We can observe that $b_i({\bf 0})= -\frac{2c}{{\pi^{1/2}}}\,{\bf r}\cdot a_{*,i}$ for any $i=1,\ldots,n$.  
Moreover, for $k=1,\ldots,n$, 
$$
\lim_{x_k\to  \pm\infty }b_i({\bf x})= 
\left\{
\begin{array}{ll}
 \pm\infty,    & i=k, \\
  \eta x_i\equiv \widehat b_i({\bf x}),   & i\neq k,
\end{array}
\right.
$$
so  that the transformation performed by means of $h_c({\bf x})$ in \eqref{3.13} mostly affects the OU process $\widehat{\bf{X}}_t$ for small values of ${\bf x}$.
\begin{remark}
    Recalling Eqs.\ \eqref{mom_infinitesimali_OU} and \eqref{drift_OU}, 
we can obtain a bound for the Euclidean distance between the drift vectors of the 
processes $\widehat{{\bf X}}_t$ and ${\bf X}_t$, i.e.
$$
 \lvert\lvert \widehat{{\bf b}}
    ({\bf x})-{\bf b}
    ({\bf x})\rvert \rvert 
    =\frac{2}{{\pi^{1/2}}}\  \left\lvert\frac{c\,\exp\{-({\bf r}\cdot{\bf x}^\top)^2\}}{1+c\,{\rm Erf}({\bf r}\cdot{\bf x}^\top)}\right\rvert \ \lvert\lvert {\bf r}\cdot A\rvert \rvert.
$$
Indeed, by recalling the well-known bounds
$$
 \left(1-\exp\{-x^2\}\right)^{1/2}
 \leq{\rm Erf}(x)\leq
 \left(1-\exp\{-4x^2/\pi\}\right)^{1/2},
 \qquad x\geq 0,
$$
one gets  
\begin{equation*}
  \lvert\lvert \widehat{{\bf b}}
    ({\bf x})-{\bf b}
    ({\bf x})\rvert \rvert<\frac{2}{{\pi^{1/2}}}\ 
    \frac{\lvert\lvert {\bf r}\cdot  A\rvert \rvert}{1+\sign(c\ {\bf r}\cdot{\bf x}^\top)|c|
    \left(
    {1-\exp\left\{-\zeta_{\bf x} \left( {\bf r}\cdot{\bf x}^\top\right)^2\right\}}\right)^{1/2}},
\end{equation*}
    where
    $$
    \zeta_{\bf x}=\begin{cases}
        1, & {\rm if }\; c\ {\bf r}\cdot{\bf x}^\top\geq 0, \\ 
        {4}/{\pi}, & {\rm otherwise.}
    \end{cases}
    $$
Hence, in conclusion, for any $c \in (-1,0)\cup (0,1)$ we have
    $$
    \lvert\lvert \widehat{{\bf b}}
    ({\bf x})-{\bf b}
    ({\bf x})\rvert \rvert
    <\frac{2}{{\pi^{1/2}}}\,\frac{\lvert\lvert {\bf r}\cdot A\rvert \rvert}{\ell_{\bf x}}, 
    \qquad \hbox{for \ }
    \ell_{\bf x}=\begin{cases}
        1,  \qquad &{\rm if }\;\; c\ {\bf r}\cdot{\bf x}^\top\geq 0
        \\ 
        1-|c|,  
        \qquad &{\rm otherwise.}
    \end{cases}
$$
\end{remark}
%
\begin{remark}
If the weight function is chosen as in (\ref{3.13}), the condition given in (\ref{hyperplaneD}) leads to the hyperplane $H_{{\bf y}}=\{{\bf x}\in \mathbb{R}^n :\;{\bf r} \cdot {\bf x}^\top={\bf r} \cdot {\bf y}^\top\}$, which passes through the fixed initial point ${\bf y} \in \mathbb{R}^n$.
Moreover, the half-spaces ${\cal H}^-_{{\bf y}}$ and ${\cal H}^+_{{\bf y}}$ introduced in (\ref{spacesD}) are 
defined through the 
conditions ${\bf r} \cdot {\bf x}^\top<(>)\;{\bf r} \cdot {\bf y}^\top$ and 
${\bf r} \cdot {\bf x}^\top>(<)\;{\bf r} \cdot {\bf y}^\top$, respectively, for $c>0$ ($c<0$). 
Similarly to the case treated in Remark \ref{remark_coupling}, 
the probability of the maximal coupling \eqref{maximal_coupling_def} for the OU process and the OU-transformed process is analogous to Eq.\ \eqref{maximal_coupling_w}, with a possible slight modification on the regions depending on the sign of $c$ for $h_c$ given in \eqref{3.13}. 
%
\end{remark}
\par 
Due to Theorem \ref{teo_1}, the transition p.d.f.\ of ${\bf X}_t$ can be expressed as in \eqref{2.1} and recalling Eqs.\ (\ref{3.12}) and (\ref{3.13}). 
Moreover, in this case Theorem \ref{prop_w} cannot be applied, since $k({\bf x})= {\rm Erf}\bigl\{{\bf r}\cdot{\bf x}^\top\bigr\}$ is not positive for all ${\bf x}\in {\mathbb{R}}^n$. 
\par
For the process having infinitesimal moments (\ref{drift_OU}), it is not possible to determine analytically the number of maximum points of $f({\bf x},t\,|\,{\bf y},\tau)$. However, in Section \ref{subs:tOU} we provide some results for the transformed OU process ${\bf X}_t$ in the case $n=2$. 
%
%
\begin{remark} 
A suitable choice of ${\bf r}$ that satisfies the condition in Eq.\ (\ref{3.15}) is given by  
${\bf r}= \left( {\eta}/{a_{ii}}\right)^{1/2}\cdot {\bf e}_i$ 
for any $i=1,\ldots,n$. 
\end{remark}
\par
Similarly to Proposition \ref{prop_W_ord}, we are able to provide conditions such that the marginals of the diffusion processes considered in this section are stochastically ordered.   
\begin{proposition} 
Consider the $n$-dimensional diffusion processes whose infinitesimal moments are described in Eqs.\ \eqref{mom_infinitesimali_OU} and \eqref{drift_OU}. 
If $a_{ij}\geq 0$ for all $1\leq i,j\leq n$, and $c\, ({\bf r} \cdot {\bf x}^\top)$ is increasing for ${\bf x}\in \mathbb{R}^n$, then $\widehat{\bf X}_t\leq_{\rm st}{\bf X}_t$ for all $t\geq 0$.
\end{proposition}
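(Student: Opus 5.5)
The argument runs parallel to the proof of Proposition \ref{prop_W_ord}: we check the two hypotheses of Proposition \ref{prop_ord}, namely that $\widehat{\bf X}_t$ is positively associated for every $t\geq 0$ and that the weight $h_c({\bf x})$ in \eqref{3.13} is componentwise increasing on $\mathbb{R}^n$.

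For association, Eq.\ \eqref{3.12} shows that the marginal of $\widehat{\bf X}_t$ started at a deterministic ${\bf y}$ is Gaussian. Its covariance matrix is $\left(e^{2\eta(t-\tau)}-1\right)A$, and every entry is nonnegative because $a_{ij}\geq 0$ and $\eta>0$. For $t=\tau$ the distribution is degenerate at ${\bf y}$, and a point mass is trivially associated. By Pitt's theorem \cite{Pitt}, a multivariate normal vector is positively associated if and only if all its covariances are nonnegative. Hence $\widehat{\bf X}_t$ is positively associated for all $t\geq 0$.

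For monotonicity, differentiate \eqref{3.13}:
$$
\partial_{x_i}h_c({\bf x})=\frac{2}{\pi^{1/2}}\,\exp\{-({\bf r}\cdot{\bf x}^\top)^2\}\; c\,r_i ,\qquad i=1,\ldots,n.
$$
The prefactor is strictly positive, so $h_c$ is componentwise increasing if and only if $c\,r_i\geq 0$ for every $i$. This is exactly the condition that the linear form $c\,({\bf r}\cdot{\bf x}^\top)$ be increasing in ${\bf x}$. Equivalently, one can write $h_c=1+c\,{\rm Erf}(u)$ with $u={\bf r}\cdot{\bf x}^\top$, note that ${\rm Erf}$ is strictly increasing, and consider the cases $c>0$ and $c<0$ separately.

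With both hypotheses in place, Proposition \ref{prop_ord} gives $\widehat{\bf X}_t\leq_{\rm st}{\bf X}_t$ for all $t\geq 0$. The theorem underlying that proposition (Theorem 6.B.8 of \cite{Shaked}) also needs $f/\widehat f=h_c({\bf x})/h_c({\bf y})$ to be increasing in ${\bf x}$, and this holds by the monotonicity just shown.

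The only point needing care is the association step, which depends on applying Pitt's criterion to the covariance $\left(e^{2\eta(t-\tau)}-1\right)A$ rather than to $\Sigma$. The paper's earlier proof phrased the condition as $\sigma_{ij}\geq 0$, but the hypothesis here is stated in terms of $a_{ij}$, and Pitt's criterion is a condition on the covariance matrix. So the proof should state explicitly that the relevant matrix is the covariance in \eqref{3.12}.
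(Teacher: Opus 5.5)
Your proposal is correct and follows the route the paper indicates, since the paper omits this proof as parallel to Proposition \ref{prop_W_ord}: Pitt's theorem gives association of the Gaussian marginal, $h_c$ is componentwise increasing exactly when $c\,r_i\geq 0$ for all $i$, and Proposition \ref{prop_ord} then gives the ordering. Your point that Pitt's criterion should be applied to the covariance $\left(e^{2\eta(t-\tau)}-1\right)A$, and not to $\Sigma$, is a fair correction of the paper's wording ``$\sigma_{ij}\geq 0$'' in the Wiener case.
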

\par
The proof proceeds along the same line of Proposition \ref{prop_W_ord}, and thus is omitted. 
\par
As done in Theorem \ref{theor_pot_wiener}, hereafter we study the representation of the process ${\bf X}_t$ in the presence of a potential $U({\bf x})$ (cf. Eq.\ (\ref{NEWsde_x_trasf})). 
\begin{theorem}\label{theor_pot_OU}
  The diffusion process having infinitesimal moments (\ref{drift_OU}) satisfies the SDE (\ref{NEWsde_x_trasf}) with potential 
   \begin{equation*}
   U({\bf x})=-\frac{\eta}{2}\,||{\bf x}||^{2}-\frac{1}{r_1}\,\log(h_c({\bf x}))\ {\bf r}\cdot {a}_{*,1}+k, 
   \qquad {\bf x} \in \mathbb{R}^n,\;\; k \in \mathbb{R},
   \end{equation*}
   with $h_c({\bf x})$ given in (\ref{3.13}),  provided that ${\bf r}$
verifies  conditions (\ref{cond_pot}) and (\ref{3.15}), with 
${\bf r}\neq {\bf 0}$. 
\end{theorem}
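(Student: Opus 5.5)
We follow the strategy of the proof of Theorem \ref{theor_pot_wiener}: show that the drift field is a gradient field, and then identify a primitive. The drift in (\ref{drift_OU}) splits as ${\bf b}({\bf x})=\eta\,{\bf x}+\Omega({\bf x})$, with
\[
\Omega({\bf x})=\frac{\nabla h_c({\bf x})\cdot A}{h_c({\bf x})}=g({\bf r}\cdot{\bf x}^\top)\,{\bf r}\cdot A,
\qquad
g(s):=\frac{2c}{\pi^{1/2}}\,\frac{e^{-s^2}}{1+c\,{\rm Erf}(s)}=\frac{{\rm d}}{{\rm d}s}\log\bigl(1+c\,{\rm Erf}(s)\bigr).
\]
The linear part $\eta\,{\bf x}$ equals $-\nabla(-\frac{\eta}{2}\|{\bf x}\|^2)$ and is always exact. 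Its contribution to the potential is therefore $-\frac{\eta}{2}\|{\bf x}\|^2$. The work reduces to the nonlinear part, which has the same structure as in the Wiener case, with $\log(1+c\,e^{-2s})$ replaced by $\log(1+c\,{\rm Erf}(s))$.

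Exactness of $\omega({\bf x})=-\sum_i b_i({\bf x})\,{\rm d}x_i$ means $\partial_{x_k}b_i=\partial_{x_i}b_k$. The linear part already satisfies this, so it suffices to check
\[
g'({\bf r}\cdot{\bf x}^\top)\bigl[r_k\,({\bf r}\cdot a_{*,i})-r_i\,({\bf r}\cdot a_{*,k})\bigr]=0 .
\]
Since $g'$ is not identically zero, this requires $r_k({\bf r}\cdot a_{*,i})=r_i({\bf r}\cdot a_{*,k})$ for all $i,k$. These are exactly the equations (\ref{cond_poti}), which say that ${\bf r}\cdot A$ is parallel to ${\bf r}$. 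As in the proof of Theorem \ref{theor_pot_wiener}, they all follow from (\ref{cond_pot}) when $r_1\neq0$. Indeed, (\ref{cond_pot}) gives ${\bf r}\cdot A=\kappa\,{\bf r}$ with $\kappa=({\bf r}\cdot a_{*,1})/r_1$, and then every antisymmetric combination above vanishes.

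With ${\bf r}\cdot A=\kappa\,{\bf r}$, the nonlinear part becomes
\[
\Omega({\bf x})=\kappa\,g({\bf r}\cdot{\bf x}^\top)\,{\bf r}=\kappa\,\nabla\log h_c({\bf x}),
\]
by the chain rule and the identity for $g$ above. Hence
\[
-\nabla U={\bf b}=\nabla\Bigl(\frac{\eta}{2}\|{\bf x}\|^2+\frac{{\bf r}\cdot a_{*,1}}{r_1}\log h_c({\bf x})\Bigr),
\]
which gives the stated $U$ up to an additive constant $k$. Condition (\ref{3.15}) ensures that $h_c$ is admissible in Theorem \ref{teo_1}, so that (\ref{drift_OU}) is indeed the drift of ${\bf X}_t$. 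The assumption $|c|<1$ gives $h_c>0$, so $\log h_c$ is well defined and smooth. As a consistency check, one can integrate componentwise as in (\ref{proof_pot1}) and verify that the arbitrary functions $\gamma_j$ reduce to $-\frac{\eta}{2}x_j^2$.

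The main obstacle is the degenerate case $r_1=0$, since the formula involves $1/r_1$. One can argue that (\ref{cond_pot}) with $r_1=0$ forces $({\bf r}\cdot a_{*,1})\,{\bf r}={\bf 0}$, hence ${\bf r}\cdot a_{*,1}=0$ because ${\bf r}\neq{\bf 0}$. The formula must then be read with an index $j$ such that $r_j\neq0$, exactly as in the closing remark of the proof of Theorem \ref{theor_pot_wiener}. We would also verify that different choices of $j$ give the same constant $\kappa$, which follows from (\ref{cond_poti}).
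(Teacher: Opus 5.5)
Your proof is correct. It shares the paper's outline (check that $\omega$ is exact, then exhibit a primitive), but you find the primitive by a different and more structural route. The paper verifies closedness via \eqref{cond_pot}. It then proceeds "as in Theorem \ref{theor_pot_wiener}": it integrates the first component of $-{\bf b}$ in $x_1$ and determines the arbitrary functions $\gamma_1(x_{2..n}),\gamma_2(x_{3..n}),\ldots$ one coordinate at a time, using \eqref{cond_pot} at each step.

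You instead read \eqref{cond_pot}, for $r_1\neq0$, as the eigenvector relation ${\bf r}\cdot A=\kappa\,{\bf r}$. By the chain rule this turns the nonlinear part of the drift into $\kappa\,\nabla\log h_c$, so ${\bf b}=-\nabla U$ is verified in one line. Your route gains several things:
\begin{itemize}
\item The closedness computation is no longer needed for the sufficiency claim; it only serves to show that \eqref{cond_pot} is also necessary.
\item It explains why $({\bf r}\cdot a_{*,j})/r_j$ is independent of $j$: it is the eigenvalue $\kappa$.
\item Combined with \eqref{3.15}, it gives $\kappa={\bf r}\cdot A\cdot{\bf r}^\top/\|{\bf r}\|^2=\eta/\|{\bf r}\|^2$. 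The potential can then be written as $-\frac{\eta}{2}\|{\bf x}\|^2-\frac{\eta}{\|{\bf r}\|^2}\log h_c({\bf x})+k$, without the artificial factor $1/r_1$.
\item In two dimensions, \eqref{iperbole_deg} is seen to be just the union of the two eigenlines of $A$.
\end{itemize}
The paper's componentwise integration needs no structural insight and copies the Wiener proof step by step. It is longer, however, and hides this eigenvector structure.

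Two minor points. First, you rightly took the nonlinear drift term with a plus sign, as \eqref{2.5bc} and the paper's own expression for $\Omega$ in the OU case require. The minus sign printed in \eqref{drift_OU} is a misprint: with it, the stated $U$ would have the opposite sign in front of $\log h_c$.

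Second, consider your treatment of $r_1=0$. There \eqref{cond_pot} collapses to ${\bf r}\cdot a_{*,1}=0$, which for $n\geq3$ no longer implies ${\bf r}\cdot A\parallel{\bf r}$. So in that case you must assume \eqref{cond_poti} for the chosen $j$ (i.e.\ that ${\bf r}$ is an eigenvector of $A$), not merely \eqref{cond_pot}. This case lies outside the theorem as stated, which implicitly requires $r_1\neq0$.
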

\begin{proof}
    Let us consider the differential 1-form $\omega({\bf x}):=-\sum_{i=1}^n b_i({\bf x})\ {\rm d}x_i$, where the first order infinitesimal moments are given in \eqref{drift_OU}. The exactness of $\omega({\bf x})$ is ensured, since relation \eqref{cond_pot} implies that the condition $\partial_{x_k}b_i({\bf x})=\partial_{x_i}b_k({\bf x})$ is fulfilled for all $i<k$. The rest of the proof follows similarly as seen in Theorem \ref{theor_pot_wiener} for the transformed Wiener process case.
\end{proof}    
\par
Since the proofs of both Theorems \ref{theor_pot_wiener} and \ref{theor_pot_OU} are based on the same differential 1-form, the corresponding regularity condition on the derivatives leads in both cases to the same equation, i.e.\ Eq.\ (\ref{cond_pot}). 
%
\section{Two-dimensional symmetric processes and absorbing boundaries} \label{boundaries}
In this section we investigate a symmetry property and dynamics in the presence of absorbing boundaries that can be envisaged for the processes investigated so far. For better tractability, we refer to the two-dimensional case, even if the general setting can be extended to higher dimensions.

\subsection{Symmetry properties}
With reference to Section \ref{sect:backgr}, for $n=2$ let $\widehat{\bf X}_t$ be the diffusion process in the domain ${\cal D}=I_1\times I_2\subseteq\mathbb{R}^2$, where $I_i=(\alpha_i,\beta_i)$ $(i=1,2)$, having infinitesimal moments (\ref{inf_mom_X_t_hat}). 
We consider the dynamics of $\widehat{\bf X}_t$ in the presence of the time-varying absorbing boundary 
\begin{equation}
 C(t)=\{{\bf x}\in {\cal D}:\,x_1=h(x_2,t)\},
 \qquad t\geq 0,
\label{eq:defCt}
\end{equation}
where the function $h= h(x_2,t):I_2 \times [0,+\infty)\rightarrow I_1$ is continuous and has continuous derivatives with respect to $x_2$ and $t$. 
For all $t \in [0,+\infty)$, we consider the disjoint regions ${\cal D}_1(t)=\{{\bf x}\in {\cal D}:x_1<h(x_2,t)\}$ and ${\cal D}_2(t)=\{{\bf x}\in {\cal D}:x_1>h(x_2,t)\}$. 
Now we focus on a transformation ${\bf\Psi}({\bf x},t)\equiv (\psi_1({\bf x},t),\psi_2({\bf x},t)):{\cal D}\times [0,+\infty)\to {\cal D}$, such that there exists a one-to-one correspondence between ${\bf x}$ and ${\bf\Psi}({\bf x},t)$ for all $t \in [0,+\infty)$, and let
$$
J({\bf x},t)
=\partial_{x_1}\psi_1({\bf x},t)\,\partial_{x_2}\psi_2({\bf x},t)-\partial_{x_1}\psi_2({\bf x},t)\,\partial_{x_2}\psi_1({\bf x},t)\neq 0
$$  
be the Jacobian determinant of the transformation.
Then, given a continuous function $\varphi: {\cal D}\times [0,+\infty)\to \mathbb R$ such that $\varphi({\bf x},t)>0$ for all ${\bf x}\in{\cal D}$ and $t\in[0,+\infty)$, in the following we shall assume the symmetry relation of the transition p.d.f.\ of $\widehat{\bf X}_t$, i.e.
\begin{equation} \label{cond_simm}
    \widehat{f}({\bf x},t\,|\,{\bf y},\tau)
    =\frac{\varphi({\bf x},t)}{\varphi({\bf y},\tau)}\,|J({\bf x},t)|\,
    \widehat{f}({\bf \Psi} ({\bf x},t),t\,|\,{\bf \Psi} ({\bf y},\tau),\tau),
    \qquad \forall \;{\bf x},{\bf y}\in{\cal D}, 
    \quad t>\tau\geq 0,
\end{equation}
to hold.
Let us now consider the diffusion process ${\bf X}_t$ satisfying the SDE (\ref{sde_x_trasf}), and whose transition p.d.f.\  is related to the transition p.d.f.\ of $\widehat{\bf X}_t$ through the product-form identity given in (\ref{2.1}). 
\begin{lemma} \label{lemma_simm}
Under the assumptions of Theorem \ref{teo_1}, let ${\bf X}_t$ be the diffusion process having infinitesimal moments (\ref{2.5bc}). 
Let the underlying diffusion process $\widehat{\bf X}_t$ have symmetric transition p.d.f.\ satisfying Eq.\ \eqref{cond_simm}, and let the transformation ${\bf\Psi}({\bf x},t)$ and the function $\varphi({\bf x},t)$ fulfill the conditions stated above. 
Then, the transition p.d.f.\ of ${\bf X}_t$ satisfies the following symmetry relation:
\begin{equation}
 f({\bf x},t\,|\,{\bf y},\tau)
 =\frac{h({\bf x})}{h({\bf y})}\,
 \frac{h({\bf \Psi}({\bf y},\tau))}{h({\bf \Psi} ({\bf x},t))}
 \,
 \frac{\varphi({\bf x},t)}{\varphi({\bf y},\tau)}\,|J({\bf x},t)|\,
    {f}({\bf \Psi} ({\bf x},t),t\,|\,{\bf \Psi} ({\bf y},\tau),\tau),
    \qquad \forall \;{\bf x},{\bf y}\in{\cal D}, \quad t>\tau\geq 0.
 \label{eq:symmperf}
\end{equation}
\end{lemma}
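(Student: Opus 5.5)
The plan is a direct substitution argument that uses the product-form identity (\ref{2.1}) twice, with the symmetry relation (\ref{cond_simm}) in between. No differential equations need to be checked: Theorem \ref{teo_1} already identifies $f$ as the transition p.d.f.\ of ${\bf X}_t$, and the claim is purely an algebraic identity between densities.

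\textbf{Step 1.} Start from (\ref{2.1}) at the original points:
\[
f({\bf x},t\,|\,{\bf y},\tau)=\frac{h({\bf x})}{h({\bf y})}\,\widehat f({\bf x},t\,|\,{\bf y},\tau).
\]
Then replace $\widehat f({\bf x},t\,|\,{\bf y},\tau)$ by the right-hand side of (\ref{cond_simm}). This gives
\[
f({\bf x},t\,|\,{\bf y},\tau)=\frac{h({\bf x})}{h({\bf y})}\,\frac{\varphi({\bf x},t)}{\varphi({\bf y},\tau)}\,|J({\bf x},t)|\,\widehat f({\bf \Psi}({\bf x},t),t\,|\,{\bf \Psi}({\bf y},\tau),\tau).
\]

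\textbf{Step 2.} Apply (\ref{2.1}) again, now at the transformed points ${\bf \Psi}({\bf x},t)$ and ${\bf \Psi}({\bf y},\tau)$, and solve for $\widehat f$:
\[
\widehat f({\bf \Psi}({\bf x},t),t\,|\,{\bf \Psi}({\bf y},\tau),\tau)=\frac{h({\bf \Psi}({\bf y},\tau))}{h({\bf \Psi}({\bf x},t))}\,f({\bf \Psi}({\bf x},t),t\,|\,{\bf \Psi}({\bf y},\tau),\tau).
\]
This step is legitimate because ${\bf \Psi}$ maps ${\cal D}$ into ${\cal D}$ for each fixed time, and (\ref{2.1}) holds for all pairs of points in ${\cal D}$ and all $t>\tau\geq0$. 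The division is allowed because $h$ is strictly positive. Substituting this into the expression from Step 1 yields (\ref{eq:symmperf}) directly.

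\textbf{Main point to check.} The only potential obstacle is that (\ref{2.1}) is a relation at fixed times with fixed spatial arguments. We must make sure it can be used when the spatial arguments are evaluated at different times, ${\bf \Psi}({\bf x},t)$ versus ${\bf \Psi}({\bf y},\tau)$. Since $h$ does not depend on time, and (\ref{2.1}) holds pointwise for all arguments in ${\cal D}$, this causes no difficulty.

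For completeness, I will also note that the right-hand side of (\ref{eq:symmperf}) is well defined and positive. This follows from $\varphi>0$, $h>0$ and $J\neq0$.
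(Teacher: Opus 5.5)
Your proposal is correct and is essentially the paper's proof: it substitutes the product form \eqref{2.1} into both sides of the symmetry relation \eqref{cond_simm} and rearranges, dividing by the strictly positive $h$. Your closing positivity remark is not needed for the lemma; note also that positivity of the right-hand side would further require $f>0$ at the transformed points, not only $\varphi>0$, $h>0$ and $J\neq 0$.
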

\begin{proof}
Eq.\ (\ref{eq:symmperf}) follows by using the product-form relation (\ref{2.1}) in both sides of Eq.\ (\ref{cond_simm}). 
\end{proof}
\subsection{First-crossing time and absorbing boundaries}
Let us now introduce the first-crossing time of $\widehat{\bf X}_t$ through the curve given in (\ref{eq:defCt}). Specifically, for all $\tau \geq 0$ and ${\bf y}\in {\cal D}\setminus C(\tau)$, we define :
\begin{equation}
    \widehat T_{C} =\left\{
       \begin{array}{ll}
        \inf \{t\geq \tau : \widehat{\bf X}_t\in {\cal D}_2(t)\}, \quad 
       \widehat{\bf X}_{\tau}={\bf y} \in {\cal D}_1(\tau),
       \\[5pt]
        \inf \{t\geq \tau : \widehat{\bf X}_t\in {\cal D}_1(t)\}, \quad 
       \widehat{\bf X}_{\tau}={\bf y} \in {\cal D}_2(\tau).
\end{array}
    \right.
    \label{eq:defFCThatT}
\end{equation}
Moreover, for all $t>\tau \geq 0$, and ${\bf y}\in {\cal D}\setminus C(\tau)$ we introduce the transition p.d.f.\ of $\widehat{\bf X}_t$ in the presence of the absorbing boundary $C(t)$, also known as {\em taboo} transition p.d.f.:
\begin{equation}
\widehat{f}^A({\bf x},t\,|\,{\bf y},\tau)\,{\rm d}{\bf x}
=\mathsf P\big(\widehat{\bf X}_t\in {\rm d}{\bf x}, \widehat T_{C}>t \,|\,\widehat{\bf X}_{\tau}={\bf y}\big), 
\qquad {\bf x} \in {\cal D}.
\label{eq:defhatfA}
\end{equation}
Obtaining closed-form expressions for the density of $\widehat T_{C}$ and for the taboo transition p.d.f.\ in general is not an easy task. For instance, in the case of two-dimensional Wiener and OU processes the first-exit time through time-varying ellipses is studied by means of computational and asymptotic results (see Di Crescenzo et al.\ \cite{Di_Crescenzo2024}). 
However, hereafter we show that an approach based on the symmetry specified in Eq.\ (\ref{cond_simm}) and on the product-form relation in (\ref{2.1}) allows us to obtain net and tractable expressions for the taboo transition p.d.f.\ of the transformed process ${\bf X}_t$. 
\par 
Under the assumption that the symmetry relation (\ref{cond_simm}) holds for all ${\bf x},{\bf y}\in{\cal D}$ and $t>\tau\geq 0$, and assuming that the transformation ${\bf\Psi}({\bf x},t)$ 
and the function $\varphi({\bf x},t)$ are such that 
\begin{align}\label{cond_teo_1995}
& {\bf x}\in{\cal D}_i \quad \Longleftrightarrow \quad 
    {\bf\Psi}({\bf x},t)\in {\cal D}_j \qquad 
    \forall \; i,j=1,2,  \; i\neq j,
    \nonumber
\\
    & \psi_1(h(x_2,t), x_2, t)=h(x_2,t), \qquad
    \psi_2(h(x_2,t), x_2, t)=x_2, 
\\    
    & \varphi(h(x_2,t), x_2, t)=1,
    \qquad \forall x_2\in I_2, \; t\geq 0,
 \nonumber    
\end{align}  
then the taboo transition p.d.f.\ (\ref{eq:defhatfA}) can be expressed as 
\begin{equation}
 \widehat{f}^A({\bf x},t\,|\,{\bf y},\tau)
 =\widehat{f}({\bf x},t\,|\,{\bf y},\tau)
 -\varphi({\bf x},t)\,\widehat{f}({\bf \Psi} ({\bf x},t),t\,|\,{\bf y},\tau), 
\label{fA_proc_orig}
\end{equation}
for all $t>\tau \geq 0$ and 
$({\bf x},{\bf y})\in D_1(t)\times D_1(\tau)$ or 
$({\bf x},{\bf y})\in D_2(t)\times D_2(\tau)$ (see Theorem 3.1 of Di Crescenzo et al.\ \cite{Di_Crescenzo95} for details).
%
%
%
%
Hereafter, we show that the symmetry identity expressed in 
(\ref{cond_simm}) not only leads to the relation for the p.d.f.\ $\widehat{f}^A$ given in (\ref{fA_proc_orig}), but it also allows us to develop similar results for the transformed process ${\bf X}_t$. 
Similarly as Eq.\ (\ref{eq:defhatfA}), we can introduce the transition p.d.f.\ of ${\bf X}_t$ in the presence of the absorbing boundary $C(t)$, i.e.
\begin{equation}
{f}^A({\bf x},t\,|\,{\bf y},\tau)\,{\rm d}{\bf x}
=\mathsf P\big({\bf X}_t\in {\rm d}{\bf x},  T_{C}>t \,|\,{\bf X}_{\tau}={\bf y}\big), 
\qquad {\bf x} \in {\cal D},
\label{eq:deffA}
\end{equation}
for all $t>\tau \geq 0$, and ${\bf y}\in {\cal D}\setminus C(\tau)$, where, similarly as $\widehat T_{C}$ defined in Eq.\ (\ref{eq:defFCThatT}), $T_{C}$ denotes the first-crossing time of ${\bf X}_t$ through $C(t)$. The probabilistic analysis of $T_{C}$ 
relies on 
\begin{equation}
\begin{array}{l}
    g^*(h(z,t),z,t\,|\,{\bf y},\tau)\,{\rm d}t\,{\rm d}z  
    \\
    \qquad = \mathsf{P}\{t\leq T_C \leq t+ {\rm d}t, h(z,t)\leq X_1(t)<h(z+{\rm d}z,t),\; z\leq X_2(t)<z+{\rm d}z \,|\, {\bf X}_{\tau}={\bf y}\},  
\end{array}
    \label{gstar}
\end{equation}
for $z \in I_2$ and $t> \tau\geq 0$, since $\mathsf{P}(T_C \in {\rm d}t\,|\, {\bf X}_{\tau}={\bf y})/ {\rm d}t
=\int_{I_2}g^*(h(z,t),z,t\,|\,{\bf y},\tau)\,{\rm d}z$. 
From the Markovianity  and the continuity a.s.\ of the sample paths of  ${\bf X}_t$, 
the following integral equation holds:
\begin{equation} \label{relaz_f}
    f({\bf x},t\,|\, {\bf y},\tau)
    =\int_{\tau}^t {\rm d}\theta \int_{I_2}g^*(h(z,\theta),z,\theta\,|\,{\bf y},\tau)
    \,f({\bf x},t\,|\,h(z,\theta),z,\theta)\,{\rm d}z,
\end{equation}
for $({\bf x},{\bf y})\in  D^*_2(t)\times D_1(\tau)$ or $({\bf x},{\bf y})\in  D^*_1(t)\times D_2(\tau)$, where $D^*_i(t)=D_i(t) \cup C(t)$, $i=1,2$, as well as
\begin{equation} \label{relaz_fA}
    f^A({\bf x},t\,|\, {\bf y},\tau)
    = f({\bf x},t\,|\, {\bf y},\tau)-\int_{\tau}^t {\rm d}\theta \int_{I_2}g^*(h(z,\theta),z,\theta\,|\,{\bf y},\tau)
    \,f({\bf x},t\,|\,h(z,\theta),z,\theta)\,{\rm d}z,
\end{equation}
for all $t>\tau \geq 0$ and $({\bf x},{\bf y})\in  D_1(t)\times D_1(\tau)$ or $({\bf x},{\bf y})\in  D_2(t)\times D_2(\tau)$.
\par
We are now able to obtain an explicit result for the taboo transition p.d.f.\ of ${\bf X}_t$. 
We emphasize the limited availability of results of this type in the literature, that often force authors 
to focus mainly on numerical computations or asymptotic approximations of crossing-time densities 
of diffusion processes. 
\begin{proposition}\label{prop_taboo_pdf}
Under the assumptions of Theorem \ref{teo_1}, let ${\bf X}_t$ be the diffusion process having infinitesimal moments (\ref{2.5bc}), and let $C(t)$ be the curve defined in \eqref{eq:defCt}. 
Let the underlying diffusion process $\widehat{\bf X}_t$ have symmetric transition p.d.f.\ satisfying Eq.\ \eqref{cond_simm}, and let the transformation ${\bf\Psi}({\bf x},t)$ and the function $\varphi({\bf x},t)$ satisfy conditions (\ref{cond_teo_1995}). 
Then, the transition p.d.f.\ in the presence of the absorbing boundary $C(t)$, defined in \eqref{eq:deffA}, can be expressed in terms of the transition p.d.f.\ of ${\bf X}_t$ as 
%
\begin{equation}
{f}^A({\bf x},t\,|\,{\bf y},\tau)
={f}({\bf x},t\,|\,{\bf y},\tau)
-\frac{h({\bf x})}{h({\bf \Psi}({\bf x},t))}\,
\varphi({\bf x},t)\,\lvert J({\bf x},t)\rvert\, 
f({\bf \Psi}({\bf x},t),t\,|\,{\bf y},\tau), 
\label{fA_proc_new}
\end{equation}  
for all $t>\tau \geq 0$ and $({\bf x},{\bf y})\in D_1(t) \times D_1(\tau)$ or 
$({\bf x},{\bf y})\in D_2(t)\times D_2(\tau)$. 
\end{proposition}
\begin{proof}
Let us evaluate the p.d.f.\ of ${\bf X}_t$ when the initial state of the process is on the curve \eqref{eq:defCt}. By taking  
$({\bf y},\tau)=(h(z,\theta),z,\theta)$, the identities given in \eqref{cond_teo_1995} yield  
$$
h({\bf \Psi}({\bf y},\tau))=h({\bf \Psi}(h(z,\theta),z,\theta))
=h(h(z,\theta),z)=h({\bf y}), \qquad 
\varphi({\bf y},\tau)=\varphi(h(z,\theta),z,\theta)=1.
$$
Hence, applying Lemma \ref{lemma_simm}, from 
Eq.\ \eqref{eq:symmperf} one has  
\begin{equation} \label{rel_simm_f}
  f({\bf x},t\,|\,h(z,\theta),z,\theta)=  \frac{h({\bf x})}{h({\bf \Psi} ({\bf x},t))}
 \,
 \varphi({\bf x},t)\,|J({\bf x},t)|\,
    {f}({\bf \Psi} ({\bf x},t),t\,|\,\,h(z,\theta),z, \theta).
\end{equation}


Recalling the relation \eqref{relaz_fA}, from \eqref{rel_simm_f} we get
\begin{align*}
   {f}^A({\bf x},t\,|\,{\bf y},\tau)
&={f}({\bf x},t\,|\,{\bf y},\tau)\\
&-\frac{h({\bf x})}{h({\bf \Psi}({\bf x},t))}\,
\varphi({\bf x},t)\,\lvert J({\bf x},t)\rvert\, 
 \int_{\tau}^t {\rm d}\theta \int_{I_2}g^*(h(z,\theta),z,\theta\,|\,{\bf y},\tau)
    \,f({\bf \Psi}({\bf x},t),t\,|\,h(z,\theta),z,\theta) \,{\rm d}z.
\end{align*}
The result (\ref{fA_proc_new}) thus follows by making use of Eq.\ \eqref{relaz_f}.
\end{proof}
\par
Let us now obtain a product-form identity for the taboo transition p.d.f.'s defined in \eqref{eq:defhatfA} and \eqref{eq:deffA}. 
\begin{corollary} \label{cor_taboo_pdf}
Under the assumptions of Proposition \ref{prop_taboo_pdf}, the taboo transition p.d.f.'s of processes ${\bf X}_t$ and $\widehat{\bf X}_t$ satisfy
$$
 f^A({\bf x},t\,|\,{\bf y},\tau)
 =\frac{h({\bf x})}{h({\bf y})}\,
 \widehat{f}^A({\bf x},t\,|\,{\bf y},\tau),
$$ 
for all $t>\tau \geq 0$ and $({\bf x},{\bf y})\in D_1(t) \times D_1(\tau)$ or $({\bf x},{\bf y})\in D_2(t)\times D_2(\tau)$. 
\end{corollary}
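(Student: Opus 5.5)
The plan is to start from the explicit representation (\ref{fA_proc_new}) of Proposition \ref{prop_taboo_pdf} and rewrite each term through the product-form identity (\ref{2.1}). The target is the right-hand side of (\ref{fA_proc_orig}) multiplied by $h({\bf x})/h({\bf y})$. Fix $t>\tau\geq 0$ and $({\bf x},{\bf y})$ in $D_1(t)\times D_1(\tau)$ or in $D_2(t)\times D_2(\tau)$; these are the regions where both (\ref{fA_proc_new}) and (\ref{fA_proc_orig}) are valid.

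The first term is immediate: by (\ref{2.1}), $f({\bf x},t\,|\,{\bf y},\tau)=\frac{h({\bf x})}{h({\bf y})}\widehat f({\bf x},t\,|\,{\bf y},\tau)$.

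For the second term, apply (\ref{2.1}) at the point ${\bf \Psi}({\bf x},t)\in{\cal D}$:
$$
f({\bf \Psi}({\bf x},t),t\,|\,{\bf y},\tau)=\frac{h({\bf \Psi}({\bf x},t))}{h({\bf y})}\,\widehat f({\bf \Psi}({\bf x},t),t\,|\,{\bf y},\tau).
$$
Substituting this into (\ref{fA_proc_new}), the factor $h({\bf \Psi}({\bf x},t))$ cancels. What remains is
$$
f^A({\bf x},t\,|\,{\bf y},\tau)=\frac{h({\bf x})}{h({\bf y})}\Big[\widehat f({\bf x},t\,|\,{\bf y},\tau)-\varphi({\bf x},t)\,|J({\bf x},t)|\,\widehat f({\bf \Psi}({\bf x},t),t\,|\,{\bf y},\tau)\Big].
$$

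The final step is to recognize the bracket as $\widehat f^A({\bf x},t\,|\,{\bf y},\tau)$, and this is where I expect the main obstacle. Formula (\ref{fA_proc_orig}) as quoted contains no Jacobian factor, whereas the bracket carries $|J({\bf x},t)|$. I would resolve this by rederiving the taboo formula for $\widehat{\bf X}_t$ from scratch, instead of relying on the quoted form. The derivation mirrors the proof of Proposition \ref{prop_taboo_pdf} with $h\equiv 1$:

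\begin{itemize}
\item apply (\ref{cond_simm}) with initial point $(h(z,\theta),z,\theta)$ on the curve, where (\ref{cond_teo_1995}) gives ${\bf \Psi}={\rm id}$ and $\varphi=1$;
\item insert the result into the analogues of (\ref{relaz_f})--(\ref{relaz_fA}) for $\widehat{\bf X}_t$.
\end{itemize}

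This yields $\widehat f^A=\widehat f-\varphi\,|J|\,\widehat f({\bf \Psi}(\cdot))$, which is exactly the bracket; (\ref{fA_proc_orig}) is then the special case in which $|J|$ is absorbed into $\varphi$, or equals $1$, as for reflections. With this consistent form of the taboo density for $\widehat{\bf X}_t$, the identity $f^A=\frac{h({\bf x})}{h({\bf y})}\widehat f^A$ follows.

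As an alternative route that avoids this bookkeeping, I could argue directly from (\ref{relaz_fA}). By (\ref{2.1}), the first-passage density $g^*$ of ${\bf X}_t$ equals $\frac{h(h(z,\theta),z)}{h({\bf y})}\widehat g^*$. Substituting this and (\ref{2.1}) into (\ref{relaz_fA}), every $h$ at the crossing point cancels, and the result follows.
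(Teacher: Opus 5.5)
Your main route is the paper's proof: insert \eqref{2.1} into both terms of \eqref{fA_proc_new}, cancel $h({\bf \Psi}({\bf x},t))$, and match the bracket with $\widehat f^A$. You are also right that \eqref{fA_proc_orig} as printed lacks the factor $|J({\bf x},t)|$ carried by \eqref{cond_simm} and \eqref{fA_proc_new}; your rederived form $\widehat f^A=\widehat f-\varphi\,|J|\,\widehat f({\bf \Psi}({\bf x},t),t\,|\,{\bf y},\tau)$ is the consistent one, reduces to \eqref{fA_proc_orig} when $|J|\equiv 1$ (reflection-type ${\bf \Psi}$), and closes a step the paper's one-line proof passes over.

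Your alternative route is under-justified as written. \eqref{2.1} is a one-time identity and does not by itself give $g^*=\frac{h(h(z,\theta),z)}{h({\bf y})}\,\widehat g^*$. To get it, you would telescope \eqref{2.1} through the finite-dimensional distributions, which shows that the law of ${\bf X}$ on $[\tau,t]$ has density $h(\widehat{\bf X}_t)/h({\bf y})$ with respect to that of $\widehat{\bf X}$. With that fact in hand, the corollary follows at once because $\{T_C>t\}$ is determined by the path on $[\tau,t]$, with no symmetry assumption needed.
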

\begin{proof}
The proof follows by making use of the product-form relation \eqref{2.1} in both terms on the right-hand-side of \eqref{fA_proc_new}, and recalling Eq.\ \eqref{fA_proc_orig}. 
\end{proof}
In Section $4$ of \cite{Di_Crescenzo95}, the transition p.d.f. $\widehat{f}^A({\bf x},t|{\bf y},\tau)$ has been specified for the two-dimensional linear process $\widehat{\bf X}_t$, that will be considered in the following section.
\section{Two-dimensional transformed linear processes}\label{sect:5}
In this section, we analyze in detail the case of a diffusion in $\mathbb{R}^2$. 
Specifically, with reference to (\ref{inf_mom_X_t_hat}), let the process 
$\widehat{\bf X}_t$ be a two-dimensional linear diffusion process with infinitesimal moments
\begin{equation}
\widehat{b}_{i}(\textbf{x})
=\theta_{i1}x_1+\theta_{i2}x_2+\theta_{i3} \hspace{0.5 cm} (i=1,2),
\qquad 
 {\widehat{ A}({\bf x})}=A := 
\begin{bmatrix}
	a_1^2 & \rho a_1 a_2 \\
	\rho a_1 a_2 & a_2^2
    \end{bmatrix},   
    \label{inf_mom_lin_bid}
\end{equation}
where $\theta_{ij} \in \mathbb{R}$ $(i=1,2;\,j=1,2,3)$, 
$a_{i} >0$ $(i=1,2),$ with $-1 < \rho<1$. Moreover, the matrix 
$A$ is positive definite, so $\Delta:=   |A| = a_1^2 a_2^2(1-\rho^2)>0$. 
\par
The function $Z({\bf x}):=A^{-1}\cdot M \cdot {\bf x}^\top$ must verify the "potential condition"
$\partial_{x_2}Z_1(x_1,x_2)=\partial_{x_1} Z_2(x_1,x_2)$ 
(cf.\ Gardiner \cite{Gardiner}), that is
\begin{equation} \label{cond_staz_Y}
a_1^2 \theta_{21}-a_2^2 \theta_{12}+\rho a_{1}a_2(\theta_{22}-\theta_{11})=0. 
\end{equation}
Let us also consider 
\begin{equation}
M = \begin{bmatrix}
	\theta_{11} & \theta_{12} \\
	\theta_{21} & \theta_{22}
\end{bmatrix} ,
\qquad
{\bf m} =\left(\theta_{13},\,\theta_{23}
\right).
\label{def_M_m}
\end{equation}
We denote the eigenvalues of $M$ as 
\begin{equation} \label{autovalori_m}
    \lambda_{1,2}=\frac{1}{2}\left({\rm Tr}(M)\pm {\xi^{1/2}}\right), 
    \qquad \lambda_{1}<\lambda_{2},
\end{equation}
where 
\begin{equation}
\xi:=\left({\rm Tr}(M)\right)^2-4|M|=(\theta_{22}-\theta_{11})^2+4 \theta_{12}\theta_{21}, 
\label{sigmaM}
 \end{equation}
and
$$
|M|=\lambda_1 \lambda_2=\theta_{11}
\theta_{22}-\theta_{12}\theta_{21}, 
\qquad
{\rm Tr}(M)=\lambda_1+\lambda_2=\theta_{11}+\theta_{22}.
$$
Note that condition (\ref{cond_staz_Y}) assures that $\xi \geq 0$, so that the eigenvalues $\lambda_{1,2}$ are real numbers.\par
Moreover, due to condition (\ref{cond_staz_Y}), the transition p.d.f.\ 
of ${\bf \widehat{X}}_t$ is normal, with 
\begin{equation}\label{f_norm}
\widehat{f}({\bf x},t\,|\,{\bf y},\tau)=
{\cal N}\Big(\left(M_i(t\,|\,{\bf y},\tau)\right),\left(D_{ij}(t\,|\,\tau)\right)\Big),
\end{equation}
where the mean values $M_i(t\,|\,{\bf y},\tau)$ and the terms of the covariance matrix $D_{ij}(t\,|\,\tau)$ can be explicitly given in the following three cases (for instance, see Section 4 of \cite{Di_Crescenzo95} for details): 
\\
(i) $\lambda_1=\lambda_2=0$, in this case the only possible occurrence is $\theta_{11}=\theta_{22}=\theta_{12}=\theta_{21}=0$; the process $\widehat{\bf X}_t$ is a Wiener process; 
\\
%
(ii)  $\lambda_1=\lambda_2\neq 0$, in this case the only possible occurrence is $\theta_{11}=\theta_{22}\neq 0$ and $\theta_{12}=\theta_{21}=0$; the process $\widehat{\bf X}_t$ is a OU process;
\\
(iii) $\lambda_1 \neq \lambda_2$. 
%
\par
For the process $\widehat{\bf X}_t$ having infinitesimal moments (\ref{inf_mom_lin_bid}) let us now define the following index:
\begin{equation}
    \delta= \delta\left(\widehat{b}_{i}(\textbf{x})\right):=\left\{
    \begin{array}{ll}
        0,& \text{if}\; M= ({\bf 0}\; {\bf 0})^\top\;
        \text{and} \;{\bf m}\neq {\bf 0},
    \\   
    1,& \text{if}\; M\neq ({\bf 0}\; {\bf 0})^\top
    \; \text{and}\;{\bf m}= {\bf 0}.
\end{array}
\right.
\label{delta}
\end{equation}
%
The following Proposition \ref{prop_w_dim2} focuses on  
$h_c({\bf x})$ for the two-dimensional linear diffusion process. 
Preliminarily, hereafter we investigate some properties of $M$ and its eigenvalues (\ref{autovalori_m}), recalling that the expressions of ${\rm Tr}(M)$ and $|M|$ are given respectively in (\ref{def_M_m}) and (\ref{sigmaM}) (see also Section 4 of \cite{Di_Crescenzo95}). 
\par
%
\begin{table}[t] 
\begin{tabular}{l  |l   |c|c|c|c}
\hline
&     Cases   & $\delta$ &  $|M|$ & ${\rm Tr}(M)$ & $\lambda_{1,2}$   \\
\hline 
$\lambda_1=\lambda_2$&(a)   $\lambda_1=0$  &  0  & 0  & 0 & $0$\\
$(\xi=0)$    &(b)   $\lambda_1\neq 0$ ($\theta_{11}=\theta_{22}\neq 0,\; \theta_{12}=\theta_{21}=0$)  &  1  & $\theta_{11}^2$  & $2 \theta_{11}$ & $\theta_{11}$ \\
\hline
$\lambda_1 \neq \lambda_2$ & (c) $\lambda_1+\lambda_2=0$  &  1  & $-\theta_{11}^2-\theta_{12}\theta_{21}$   & 0 & $\lambda_{2}=\frac{{\xi^{1/2}}}{2}$\\     
$(\xi>0)$ &(d) $\theta_{21}=0,\theta_{11}=0,\lambda_1+\lambda_2 \neq 0$  &  1    & $0$   & $\theta_{22}$ & $\lambda_{2}=\theta_{22}$\\   
&(e) $\theta_{21}=0,\theta_{11}\neq 0,\lambda_1+\lambda_2 \neq 0$  &  1    &$\theta_{11}\theta_{22}$  & $\theta_{11}+\theta_{22}$ & $\frac{\theta_{11}+\theta_{22}\pm |\theta_{22}-\theta_{11}|}{2}$ \\  
&(f) $\theta_{21}\neq 0,\lambda_1\lambda_2=0,\lambda_1+\lambda_2 \neq 0$ &  1    & 0  & $\theta_{11}+\theta_{22}$ & $\frac{\theta_{11}+\theta_{22}\pm |\theta_{11}+\theta_{22}|}{2}$ \\ 
&(g) $\theta_{21}\neq 0,\lambda_1\lambda_2\neq 0,\lambda_1+\lambda_2 \neq 0$  &  1    & $\theta_{11}\theta_{22}-\theta_{12}\theta_{21}$  & $\theta_{11}+\theta_{22}$ &  cf.\ (\ref{autovalori_m}) \\ 
\hline
\end{tabular}
\caption{The suitable instances for the eigenvalues of $M$ in (\ref{def_M_m}). 
The last column, when $\delta=1$, gives the possible occurrences 
of positive eigenvalues, under the conditions specified in 
Lemma \ref{pos_eigenvalue}.}
\label{table_cases}
\end{table}
\begin{lemma}\label{pos_eigenvalue}
Under the potential condition (\ref{cond_staz_Y}), if 
$M\neq ({\bf 0}\;{\bf 0})^\top$  and ${\bf m}={\bf 0}$, that is if $\delta=1$, 
then at least one eigenvalue $\lambda_{1,2}$ is positive under the following conditions, based on the cases of Table \ref{table_cases}:
\begin{itemize}
    \item ${\rm Tr}(M)>0$ in the cases (b), (d), (f);
    \item $|M|<0$ in the case (e);
    \item ${\rm Tr}(M)>0$ or $\{{\rm Tr}(M)<0,|M|<0\}$ in the case (g).
\end{itemize}
Moreover, in the case (c) one of the eigenvalues, 
i.e.\ ${\xi^{1/2}}/2$, is always positive. 
\end{lemma}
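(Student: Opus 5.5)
The argument is elementary and reduces to the eigenvalue formula \eqref{autovalori_m}, with $\lambda_1\lambda_2=|M|$, $\lambda_1+\lambda_2={\rm Tr}(M)$ and $\xi\geq 0$. Under $\delta=1$ we have $M\neq({\bf 0}\;{\bf 0})^\top$, so case (a) of Table \ref{table_cases} is excluded. I will go through the remaining cases one by one and in each case identify $\lambda_2$, the larger eigenvalue, checking when it is positive.

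\textbf{Case (b).} Here $\lambda_1=\lambda_2=\theta_{11}={\rm Tr}(M)/2$, so both eigenvalues are positive exactly when ${\rm Tr}(M)>0$.

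\textbf{Case (c).} Here ${\rm Tr}(M)=0$ and $\xi>0$, so $\lambda_{1,2}=\pm\xi^{1/2}/2$ and $\lambda_2=\xi^{1/2}/2>0$ always.

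\textbf{Case (d).} With $\theta_{21}=\theta_{11}=0$ we get $|M|=0$, hence one eigenvalue is $0$ and the other is ${\rm Tr}(M)=\theta_{22}$; it is positive iff ${\rm Tr}(M)>0$.

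\textbf{Case (f).} Again $|M|=0$, so the eigenvalues are $0$ and ${\rm Tr}(M)$, and the same condition ${\rm Tr}(M)>0$ applies. In both (d) and (f) the condition is necessary and sufficient.

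\textbf{Case (e).} With $\theta_{21}=0$, $M$ is upper triangular, so its eigenvalues are $\theta_{11}$ and $\theta_{22}$. If $|M|=\theta_{11}\theta_{22}<0$, they have opposite signs, so exactly one is positive. This is the sufficient condition stated. I would also note that $|M|>0$ with ${\rm Tr}(M)>0$ would work too, but the statement lists only sufficient conditions, so proving the listed one is enough.

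\textbf{Case (g).} The eigenvalues are real and nonzero. If $|M|<0$, they have opposite signs, so one is positive; this covers the sub-case $\{{\rm Tr}(M)<0,|M|<0\}$. If ${\rm Tr}(M)>0$, then $\lambda_2=\tfrac12({\rm Tr}(M)+\xi^{1/2})>0$ since $\xi\geq0$.

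The main point to justify is the reality of the eigenvalues in cases (e) and (g), where the sign arguments are used. This follows from $\xi\geq 0$, which the paper derives from the potential condition \eqref{cond_staz_Y}. If needed, I would verify it directly. When $a_1^2\theta_{21}=a_2^2\theta_{12}-\rho a_1a_2(\theta_{22}-\theta_{11})$, the matrix $A^{-1}M$ is symmetric, so $M=A\,(A^{-1}M)$ is similar to the symmetric matrix $A^{1/2}(A^{-1}M)A^{1/2}$ and therefore has real spectrum. Consequently the sign rules based on $\lambda_1\lambda_2$ and $\lambda_1+\lambda_2$ apply.
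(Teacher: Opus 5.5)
Your case-by-case argument is correct and matches the reasoning the paper relies on: the lemma is stated without a separate proof, as a direct reading of the trace/determinant entries of Table \ref{table_cases} together with $\xi\geq 0$. Your similarity argument ($M=A\,(A^{-1}M)$ with $A^{-1}M$ symmetric under \eqref{cond_staz_Y}) supplies the justification of $\xi\geq 0$ that the paper only asserts; it is strictly needed only for the ${\rm Tr}(M)>0$ branch of case (g), since in case (e) $M$ is triangular and $|M|<0$ by itself forces real eigenvalues of opposite sign.
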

\begin{proposition} \label{prop_w_dim2}
Let the assumptions of Lemma \ref{pos_eigenvalue} hold, and let the vector ${\bf r}=(r_1,r_2)$ satisfy 
\begin{equation}
 {\bf r}\cdot   A \cdot{\bf r}^\top=\left\{
    \begin{array}{ll}
      {\bf r} \cdot {\bf m}^\top,   & \text{if}\; \delta=0,\\
       \lambda,  & \text{if}\; \delta=1,
    \end{array}
    \right.
    \label{lambda}
\end{equation}
where $\lambda$ is any of the positive eigenvalues of $M$. Then, 
a solution of Eq.\ (\ref{2.5}) for the process $\widehat{\bf X}_t$ with infinitesimal moments (\ref{inf_mom_lin_bid}) is:
\begin{equation}
    h_c({\bf x})=1+c \int_0^{{\bf r}\cdot{\bf x}^\top} \exp \{-(2-\delta)z^{\delta+1}\}\ {\rm d}z= 
\left\{
\begin{array}{ll}
 1+c\exp\bigl\{-2\,{\bf r} \cdot {\bf x}^\top\bigr\},  & \text{if}\; \delta=0, 
 \\[2mm]
 1+c\,{\rm Erf}\bigl\{{\bf r}\cdot{\bf x}^\top\bigr\}, & \text{if}\; \delta=1,
\end{array}
\right. 
    \label{k_lin_proc}
\end{equation}
where $\delta$ is given in Eq.\ (\ref{delta}), and where $c\in (0,+\infty)$ if $\delta=0$, and $c\in (-1,0)\cup (0,1)$ if $\delta=1$. 
\end{proposition}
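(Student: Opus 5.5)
The plan is to look for $h$ as a function of a single linear combination, $h_c({\bf x})=1+c\,g(u)$ with $u={\bf r}\cdot{\bf x}^\top$. Then $\nabla h_c=c\,g'(u)\,{\bf r}$ and the Hessian is $c\,g''(u)\,{\bf r}^\top{\bf r}$. Substituting into \eqref{2.5} with the moments \eqref{inf_mom_lin_bid} reduces the PDE to
\begin{equation*}
c\Bigl[g'(u)\,\bigl({\bf x}\cdot M^\top+{\bf m}\bigr)\cdot{\bf r}^\top+\tfrac12\,g''(u)\,{\bf r}\cdot A\cdot{\bf r}^\top\Bigr]=0 .
\end{equation*}
The task is then to choose ${\bf r}$ so that the drift term depends on ${\bf x}$ only through $u$, and to solve the resulting linear ODE for $g$. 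I would treat the two values of $\delta$ separately.

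Case $\delta=0$ ($M=0$, ${\bf m}\neq{\bf 0}$). This is the Wiener case already treated in Section~\ref{sect:Wiener}. The equation becomes $g'\,{\bf r}\cdot{\bf m}^\top+\frac12 g''\,{\bf r}\cdot A\cdot{\bf r}^\top=0$. Using \eqref{lambda}, i.e.\ \eqref{3.4generica}, this is $g''=-2g'$. Its nonconstant solution is $g'(z)=e^{-2z}$, which matches the integrand $\exp\{-(2-\delta)z^{\delta+1}\}=e^{-2z}$. Integrating from $0$ gives $g(u)=\frac12(1-e^{-2u})$. I would then absorb the affine normalisation into the constant $c$, exactly as in \eqref{3.3generica}, to obtain $1+c\,e^{-2{\bf r}\cdot{\bf x}^\top}$. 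Positivity for $c>0$ is immediate. The stated equality in \eqref{k_lin_proc} should be read modulo this harmless reparametrisation, and I would remark on that explicitly.

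Case $\delta=1$ (${\bf m}={\bf 0}$, $M\neq 0$). Now the drift term is ${\bf x}\cdot M^\top\cdot{\bf r}^\top=({\bf r}\cdot M)\cdot{\bf x}^\top$. For this to be a function of $u$ I would require ${\bf r}$ to be a left eigenvector of $M$, namely ${\bf r}\cdot M=\lambda{\bf r}$. The equation then reads $\lambda u\,g'+\frac12 g''\,{\bf r}\cdot A\cdot{\bf r}^\top=0$. With the normalisation \eqref{lambda}, ${\bf r}\cdot A\cdot{\bf r}^\top=\lambda$ and $\lambda>0$, this becomes $g''=-2u\,g'$. Hence $g'(z)=e^{-z^2}$ and $g=\frac{\sqrt\pi}{2}{\rm Erf}$, which is the error-function branch up to the constant absorbed into $c$, consistent with \eqref{3.13}–\eqref{3.15}. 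Since $|{\rm Erf}|<1$, positivity holds for $|c|<1$, and $c\neq0$ excludes the trivial case. The Lipschitz property of $\Omega$ in \eqref{cond_lipschitz_w} is verified as in the remarks of Sections~\ref{sect:Wiener} and \ref{sect:OUtransf}.

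The main obstacle is the existence of ${\bf r}$ in the case $\delta=1$. It must simultaneously be a left eigenvector of $M$ for a positive eigenvalue $\lambda$ and satisfy the quadratic normalisation ${\bf r}\cdot A\cdot{\bf r}^\top=\lambda$. Positivity of $\lambda$ is supplied by Lemma~\ref{pos_eigenvalue}, and the eigenvalue is real by the potential condition \eqref{cond_staz_Y}. Since $A$ is positive definite, any nonzero left eigenvector can be rescaled so that ${\bf r}\cdot A\cdot{\bf r}^\top=\lambda>0$; the normalisation is therefore achievable precisely because $\lambda>0$. I would state this rescaling step explicitly, note that a negative $\lambda$ would make it impossible, and note that the eigenvector condition is implicit in the statement's phrase ``$\lambda$ is any of the positive eigenvalues''.
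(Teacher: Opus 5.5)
Your proof is correct and is essentially the paper's argument: substituting $h_c$ into \eqref{2.5} reduces the case $\delta=1$ to ${\bf r}\cdot M\cdot{\bf x}^\top=({\bf r}\cdot A\cdot{\bf r}^\top)\,{\bf r}\cdot{\bf x}^\top$ for all ${\bf x}$, i.e.\ \eqref{cond_delta1}, which is exactly your left-eigenvector condition ${\bf r}\cdot M=\lambda\,{\bf r}$ (the paper rewrites it as $M\cdot{\bf x}^\top=({\bf r}\cdot A\cdot{\bf r}^\top)\,{\bf x}$, which is garbled, and your rescaling of a left eigenvector to reach ${\bf r}\cdot A\cdot{\bf r}^\top=\lambda>0$ supplies the existence step the paper skips). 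The one thing to correct is your claim that this condition is implicit in the statement: it is a genuinely additional, necessary hypothesis, since for instance with ${\bf m}={\bf 0}$, $M={\rm diag}(2,-1)$, $A={\rm diag}(1,1)$ (case (e) of Table~\ref{table_cases} with $|M|<0$, so $\lambda=2$) the vector ${\bf r}=(1,1)$ satisfies ${\bf r}\cdot A\cdot{\bf r}^\top=\lambda$, yet for $h_c=1+c\,{\rm Erf}\{x_1+x_2\}$ the left-hand side of \eqref{2.5} equals $-\frac{6c}{\pi^{1/2}}\,x_2\,e^{-(x_1+x_2)^2}\neq 0$; so what you (and the paper) actually prove is the proposition with ${\bf r}$ also required to be a left eigenvector of $M$ for $\lambda$, a restriction that is automatic only in the scalar (OU) case $M=\theta_{11}\,{\rm diag}(1,1)$.
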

\begin{proof}
When $\delta=0$, the expression of $h_c({\bf x})$ in (\ref{k_lin_proc}) is equal to that given in (\ref{3.3generica}) for the multidimensional case.   
For $\delta=1$, since ${\bf m}= {\bf 0}$, the function $h_c({\bf x})$ in (\ref{k_lin_proc}) is solution of Eq.\ (\ref{2.5}) for the process $\widehat{\bf X}_t$ with infinitesimal moments (\ref{inf_mom_lin_bid}) provided that 
%
%
\begin{equation}
{\bf r}\cdot M \cdot {\bf x}^\top-({\bf r}\cdot  A  \cdot{\bf r}^\top) 
\; {\bf r}\cdot{\bf x}^\top=0,
\qquad {\bf x}\in \mathbb{R}^2,
\label{cond_delta1}
\end{equation}
with $M$ given in (\ref{def_M_m}).
A particular solution of (\ref{cond_delta1}) is:
$$
M\cdot {\bf x}^\top=({\bf r}\cdot  A \cdot{\bf r}^\top)\; {\bf x},
$$
so that  ${\bf r}\cdot  A \cdot{\bf r}^\top$ is an eigenvalue of $M$. Since $A$ is positive definite, the conditions in Lemma \ref{pos_eigenvalue} ensure that there exists at least one positive eigenvalue.
\end{proof}
\par
Clearly, the condition (\ref{lambda}) reduces to Eqs.\ (\ref{3.4generica}) and (\ref{3.15}) 
in the Wiener and OU process case, respectively. 
As a consequence of Proposition \ref{prop_w_dim2}, the approach followed in Section \ref{sect:OUtransf} for the transformed Ornstein--Uhlenbeck process can also be adopted for more general instances in which $M\neq ({\bf 0}\;{\bf 0})^\top$. The details are omitted since the procedure is analogous. 
\par
We now present an interpretation of $\widehat{\bf X}_t$ as a diffusion in a potential field. 
A similar procedure has been given in Section 2.3 of \cite{Di_Crescenzo_Giorno_Nobile} for a one-dimensional diffusion.
\begin{theorem}
The two-dimensional linear diffusion process $\widehat{\bf X}_t$  
    having infinitesimal moments \eqref{inf_mom_lin_bid} satisfies a SDE analogous to (\ref{NEWsde_x_trasf}) with potential  
    \begin{align*}
        \widehat{U}({\bf x})&=-\left({\bf x}\cdot M \cdot {\bf x}^\top+{\bf m}\cdot{\bf x}^\top\right)+k \nonumber \\
        &=-\left(\frac{\theta_{11}}{2}x_1^2+\frac{\theta_{22}}{2}x_2^2+\theta_{12}x_1x_2+\theta_{13}x_1+\theta_{23}x_2\right)+k, 
        \qquad {\bf x}\in \mathbb{R}^2, \quad k\in \mathbb{R}.
    \end{align*}
    Hence, the critical point ${\bf x}^*$ of $\widehat{U}({\bf x})$ is given by:
    \begin{equation*}
        {\bf x}^*= \frac{1}{|M|} \left(  \theta_{12}\theta_{23}-\theta_{22}\theta_{13} , \; 
         \theta_{12}\theta_{13}-\theta_{11}\theta_{23}\right).
    \end{equation*}
\end{theorem}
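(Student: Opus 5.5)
The plan is to follow the same differential-form argument used in the proofs of Theorems \ref{theor_pot_wiener} and \ref{theor_pot_OU}, now applied to the untransformed drift. Since $\widehat A({\bf x})\equiv A$ is constant, representing $\widehat{\bf X}_t$ as in (\ref{NEWsde_x_trasf}) with dispersion matrix $\widehat\Sigma$ amounts to finding $\widehat U$ with $-\nabla\widehat U=\widehat{\bf b}$. So I would introduce the 1-form $\widehat\omega({\bf x}):=-\sum_{i=1}^2\widehat b_i({\bf x})\,{\rm d}x_i$, with $\widehat b_i$ as in (\ref{inf_mom_lin_bid}), and show that it is exact on $\mathbb{R}^2$. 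Because $\mathbb{R}^2$ is simply connected, closedness suffices.

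The closedness condition $\partial_{x_2}\widehat b_1=\partial_{x_1}\widehat b_2$ reads $\theta_{12}=\theta_{21}$, so this is the key point to settle. The potential condition (\ref{cond_staz_Y}) asserts symmetry of $A^{-1}M$, not of $M$ itself. However, the second line of the stated formula uses the single cross coefficient $\theta_{12}x_1x_2$, and the coordinates of ${\bf x}^*$ use $\theta_{12}$ where $\theta_{21}$ would otherwise appear. I would therefore make explicit that the statement is to be read under the symmetry $\theta_{12}=\theta_{21}$, i.e.\ $M=M^\top$. This is automatic, for instance, when $\rho=0$ and $a_1=a_2$, where (\ref{cond_staz_Y}) reduces to $\theta_{21}=\theta_{12}$. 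I expect this identification of the correct hypothesis to be the main obstacle, rather than any computation. I would also remark that the compact form $-({\bf x}\cdot M\cdot{\bf x}^\top+{\bf m}\cdot{\bf x}^\top)$ must be read as $-(\tfrac12{\bf x}\cdot M\cdot{\bf x}^\top+{\bf m}\cdot{\bf x}^\top)$ to agree with the expanded second line.

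Granting symmetry, I would integrate exactly as in the proof of Theorem \ref{theor_pot_wiener}. Integrating $-\widehat b_1$ in $x_1$ gives
$-\tfrac{\theta_{11}}2x_1^2-\theta_{12}x_1x_2-\theta_{13}x_1+\gamma(x_2)$.
Matching the $x_2$-derivative with $-\widehat b_2=-(\theta_{12}x_1+\theta_{22}x_2+\theta_{23})$ gives $\gamma'(x_2)=-\theta_{22}x_2-\theta_{23}$. Hence $\gamma(x_2)=-\tfrac{\theta_{22}}2x_2^2-\theta_{23}x_2+k$, which yields the stated $\widehat U$, with $k$ the arbitrary additive constant.

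For the critical point, I would set $\nabla\widehat U={\bf 0}$, i.e.\ $M\cdot{\bf x}^\top=-{\bf m}^\top$, and assume $|M|\neq0$ (which excludes cases (a), (d) and (f) of Table \ref{table_cases}). Cramer's rule then gives $x_1^*=(\theta_{12}\theta_{23}-\theta_{22}\theta_{13})/|M|$ and $x_2^*=(\theta_{21}\theta_{13}-\theta_{11}\theta_{23})/|M|$. The latter coincides with the stated expression by the symmetry $\theta_{21}=\theta_{12}$, which completes the argument.
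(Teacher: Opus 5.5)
Your proposal is correct and follows the paper's own route: the paper's proof is just the direct computation with the 1-form $\widehat\omega({\bf x})=-\sum_{i=1}^2\widehat b_i({\bf x})\,{\rm d}x_i$, which you carry out explicitly by integrating in $x_1$ and matching the $x_2$-derivative, followed by solving $\nabla\widehat U={\bf 0}$ via Cramer's rule. The points you flag, which the paper's one-line proof leaves implicit, are all sound: exactness of $\widehat\omega$ requires $\theta_{12}=\theta_{21}$, which the potential condition (\ref{cond_staz_Y}) does not imply in general; the compact form needs the factor $\tfrac12$ in front of ${\bf x}\cdot M\cdot{\bf x}^\top$ to agree with the expanded line; and the formula for ${\bf x}^*$ needs $|M|\neq 0$ and uses the symmetry of $M$ in its second coordinate.
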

\begin{proof}
The proof follows by direct calculations, considering the differential 1-form 
defined as 
$\widehat{\omega}({\bf x})=-\sum_{i=1}^2\widehat{b_i}({\bf x})\ {\rm d}x_i$,
${\bf x}\in \mathbb{R}^2$, with $\widehat{b_i}({\bf x})$ given in \eqref{inf_mom_lin_bid}. 
\end{proof}
%
\begin{remark}\label{rem:abslinepr}
For the process $\widehat{\bf X}_t$ having infinitesimal moments (\ref{inf_mom_lin_bid}), subject to the potential condition 
(\ref{cond_staz_Y}), we consider the dynamics in the presence of the time-varying linear absorbing boundary 
\begin{equation}
    \label{C(t)}
    C(t)=\{{\bf x}\in \mathbb{R}^2: x_1=h(x_2,t)
    \equiv c^*  x_2+S(t)\},
\end{equation}
with
$$
S(t)=\left\{
\begin{array}{ll}
at+b,  & \hbox{(linear case),} 
\\[2mm]
\displaystyle{\frac{\theta_{13}(\theta_{21}c^*-\theta_{11})+\theta_{23}(\theta_{22}c^*-\theta_{12})}{\nu^2}}
+a\,e^{-\nu t}+b\,e^{\nu t},  & \hbox{(exponential case),}
\end{array}
\right.
$$
for $a,b\in \mathbb{R}$, and where the constants $c^*$ and $\nu$, and the details on the two cases, are given in Table 1 of \cite{Di_Crescenzo95} (where $c^*$ is denoted as $c$). 
The transition p.d.f.\ of $\widehat{\bf X}_t$ satisfies  
the symmetry condition (\ref{cond_simm}), where the transformation 
${\bf \Psi} ({\bf x},t)$ and the function 
$\varphi({\bf x},t)$ are specified in Eqs.\ (4.3) and (4.4) of \cite{Di_Crescenzo95}. 
Moreover, the taboo transition p.d.f.\ of $\widehat{\bf X}_t$ in the presence of the boundary (\ref{C(t)}) is given by (cf.\ Eq.\ (4.12) of \cite{Di_Crescenzo95}) 
\begin{equation}
\widehat{f}^A({\bf x},t\,|\,{\bf y},\tau)
=\widehat{f}({\bf x},t\,|\,{\bf y},\tau)
-\varphi({\bf x},t)\,\widehat f({\bf \Psi}({\bf x},t),t\,|\,{\bf y},\tau),
\label{eq:hatfAWOU}
\end{equation}
for all $t>\tau \geq 0$ and $({\bf x},{\bf y})\in D_1(t) \times D_1(\tau)$ or $({\bf x},{\bf y})\in D_2(t)\times D_2(\tau)$,
where $\widehat{f}$ is specified in Eq.\ (\ref{f_norm}). 
\end{remark}
\par
In the following sections, we analyze in detail some features of the transformed process ${\bf X}_t$ in the special cases when $\widehat{\bf X}_t$ is (i) the two-dimensional Wiener process, and (ii) the two-dimensional OU process, since the analysis of the case (iii), i.e.\ when $\lambda_1 \neq \lambda_2$, can be referred to the previous cases.
Thanks to Corollary \ref{cor_taboo_pdf} and Remark \ref{rem:abslinepr}, the study will include also the determination of the taboo transition p.d.f.\ of the transformed process ${\bf X}_t$. 
\subsection{Two-dimensional transformed Wiener process} \label{wiener_bidim}
For the special case $\delta=0$, i.e.\  $M= ({\bf 0}\; {\bf 0})^\top$ and ${\bf m}\neq {\bf 0}$, from (\ref{inf_mom_lin_bid}) we have that $\widehat{\bf X}_t$ is the two-dimensional Wiener process with infinitesimal moments 
\begin{equation*}
\widehat{b}={\bf m}=(m_1,m_2), \hspace{0.5 cm}  \qquad 
A = \begin{bmatrix}
	a_1^2 & \rho a_1 a_2 \\
	\rho a_1 a_2 & a_2^2
    \end{bmatrix},   
\end{equation*}
where $a_{i} >0$ $(i=1,2),$  $-1 < \rho<1$ and
$\Delta = |A|=a_1^2a_2^2(1-\rho^2)>0$. 
Moreover, since $m_i \in \mathbb{R}$,  $i=1,2$, with $(m_1,m_2) \neq (0,0)$, due to Proposition \ref{prop_w_dim2}, the weight function becomes
$h_c({\bf x})=1+c\exp\bigl\{-2\,{\bf r} \cdot {\bf x}^\top\bigr\}$, for $c\in (0,+\infty)$, 
and the condition (\ref{lambda}) on $\textbf{r}=(r_1,r_2)\in \mathbb{R}^2$ can be given explicitly in the form of the equation of an ellipse, as stated in the following remark. 
%
\begin{remark}\label{rem:ellWiener}
The condition (\ref{lambda}), for $\delta=0$, is verified by any $\textbf{r}\in \mathbb{R}^2$ such that 
\begin{equation}
	r_1^2  a_1^2+2r_1r_2\rho  a_1  a_2 + r_2^2  a_2^2 - m_1r_1-m_2r_2=0.
	\label{equazione_r}
\end{equation}
Furthermore, taking into account that 
\begin{equation}\label{ang_rot}
\gamma
=\left\{
\begin{array}{ll}
\displaystyle  
\frac{ a_1^2-a_2^2+\left({[{\rm Tr}(A)]^2-4|{A}|}\right)^{1/2}}{2 \rho  a_1  a_2}, & \hbox{if}\; \rho\neq 0,\\
0, & \hbox{if}\; \rho= 0
\end{array} 
\right.
\end{equation}
is the rotation angle which defines Eq.\ \eqref{equazione_r}, its solutions can be given in parametric form as 
%
\begin{equation} \label{parametriche_rhodivzero}
    \boldsymbol{r}=(r_1(\upsilon),r_2(\upsilon))=(x_0,y_0)+\frac{1}{\left(1+\gamma^2\right)^{1/2}}\,(s_1,s_2)\cdot \begin{bmatrix}
	\cos \upsilon & -\gamma \cos \upsilon \\
	\gamma \sin \upsilon & \sin \upsilon
    \end{bmatrix},
    \qquad \upsilon\in [0, 2\pi),
\end{equation}
	where $(x_0,y_0)$ is the center
	\begin{equation*}
	(x_0,y_0)= \frac{1}{2}
	\Bigl ( {\frac{m_1  a_2 - m_2 \rho  a_1 }{\Delta} a_2, \frac{m_2  a_1 - m_1 \rho  a_2}{\Delta} a_1 }\Bigr),
	\end{equation*}

and where the amplitudes of the semi-major and semi-minor axes $s_1,\,s_2$ are 
$$
 (s_1,s_2) 
 =\left\{
 \begin{array}{ll}
  \left(\displaystyle\frac{1}{2}\left({\frac{\eta_1\  \boldsymbol{m}\, {A^{-1}}\,\boldsymbol{m}^\top}{\Delta}}\right)^{1/2}, 
  \displaystyle\frac{1}{2}\left(\frac{\eta_2\  \boldsymbol{m}\, {A^{-1}}\,\boldsymbol{m}^\top}{\Delta}\right)^{1/2}\right),    
  &  \hbox{if}\; \rho\neq 0, 
  \\[5mm]
  \Bigg(
  \displaystyle  
  \frac{\left(m_2^2\  a_1^2+m_1^2\  a_2^2\right)^{1/2}}{2 a_1^2 a_2},
  \displaystyle  \frac{\left(m_2^2\  a_1^2+m_1^2\  a_2^2\right)^{1/2}}{2 a_1 a_2^2}
  \Bigg),    
  & \hbox{if}\; \rho= 0,  
 \end{array}
 \right.
$$
	with $\boldsymbol{m}=(m_1\;m_2)$ and the matrix  $A$ given  in \eqref{inf_mom_lin_bid}. 
    Moreover, $\eta_{i}$ for $i=1,2$ are the eigenvalues of $A$, obtained as 
    \begin{equation}\label{eta12}
    \eta_{1,2}
    =\frac{1}{2}\left({\rm Tr}( A)\pm \left([{\rm Tr}(A)]^2-4 | A|\right)^{1/2}\right), 
    \qquad \eta_1>\eta_2,
    \end{equation}
due to Eq.\ \eqref{autovalori_m}. 
\end{remark}
	%
	%
	%
	%

\begin{remark}\label{rem:W2ii}
The two cases treated in Remark \ref{rem:r2choices}, under the assumptions specified in this section, become  
\\
(i) ${\bf r}={\bf m} \cdot A^{-1}
=\displaystyle\frac{1}{ a_1 a_2(1-\rho^2)}\left(\frac{m_1   a_2-m_2 \rho  a_1}{ a_1},\frac{m_2   a_1-m_1 \rho  a_2}{ a_2}\right)$, 
\\
(ii) ${\bf r}= {\frac{m_i}{ a_{i}^2}}\cdot {\bf e}_i$, $i=1,2$.
\\
When $\rho=0$, in addition to the two cases given in (ii), the choice 
${\bf r}= (\frac{m_1}{ a_{1}^2},\frac{m_2}{ a_{2}^2})$ 
is also admissible. The corresponding values of $\upsilon$, using Eqs.\ \eqref{ang_rot} and  (\ref{parametriche_rhodivzero}), are provided in the Table \ref{casi_spec_rho_zero}. 
\begin{table}
    \centering
    \begin{tabular}{c|c|c}
        \hline
        $r_1$ & $r_2$ &  $\upsilon$ \\
        \hline
        ${ \frac{m_1} { a_1^2}}$ & 0 & 
        $\begin{cases}
            -\sign(m_2)\,\arccos\left(
            {
            \frac{m_1 a_2}{\left({m_1^2 a_2^2+m_2^2 a_1^2}\right)^{1/2}}
            }
            \right) 
            \qquad & \textnormal{if } {\bf m} \neq {\bf 0} \\
            0 & \textnormal{if } m_2= 0,m_1>0 \\
            \pi & \textnormal{if } m_2= 0,m_1<0
        \end{cases}$\\
        \hline
        0 & $\frac{m_2}{a_2^2}$ & 
        $\begin{cases}
        \sign(m_2)\,\arccos\left(-
        \frac{m_1 a_2}{\left({m_1^2 a_2^2+m_2^2 a_1^2}\right)^{1/2}}
        \right) 
        \qquad & \textnormal{if } {\bf m} \neq {\bf 0} \\
            \frac{\pi}{2} & \textnormal{if } m_1= 0,m_2>0 \\[5pt]
            \frac{3}{2}\pi & \textnormal{if } m_1= 0,m_2<0
        \end{cases}$\\
        \hline
         $\frac{m_1}{a_1^2}$ 
         & $\frac{m_2}{a_2^2}$ & $\sign(m_2)\,
         \arccos\left( 
         \frac{m_1 a_2}{\left({m_1^2 a_2^2+m_2^2 a_1^2}\right)^{1/2}}
         \right)$;\ ${\bf m}\neq {\bf 0}$ \\
        \hline
    \end{tabular}
    \caption{
    The values of $\upsilon$ for ${\bf r}= {\bf r}(\upsilon)$ in case (ii) of Remark \ref{rem:W2ii}, under the condition $\rho=0$.}
    \label{casi_spec_rho_zero}
\end{table}
\end{remark}

%
\begin{remark}
The transformed process ${\bf X}_t$, derived from the two-dimensional Wiener diffusion, has infinitesimal moments given in Eq.\ \eqref{mom_infinitesimali_fx} for $n=2$, i.e., for ${\bf x}\in \mathbb{R}^2$, 
\begin{equation}
\begin{array}{ll}
	b_i({\bf x}) =
	m_i - \displaystyle\frac{2c \,{a_i}}{\exp\!\bigl\{2\,{\bf r} \cdot {\bf x}^\top \bigr\} + c}
	\left(  a_i r_i + \rho  a_j r_j \right), 
	& \; i,j=1,2, \;\; i\neq j 
    \\[4mm]
	 a_{ij}({\bf x}) =
	\begin{cases}
		 a_{i}^2, & i=j, \\
		\rho a_i a_j, & i \neq j,
	\end{cases}
    & {}
\end{array}
\label{eq:momXtWien}
\end{equation}
where $c\in (0,+\infty)$,   and $\textbf{r}\in \mathbb{R}^2$ satisfies Eq.\ (\ref{equazione_r}).
With reference to Theorem \ref{theor_pot_wiener}, the potential \eqref{potential_wiener_n} of the process ${\bf X}_t$ now becomes 
   \begin{equation}
   U({\bf x})=-m_1 x_1-m_2 x_2-\log(h({\bf x}))\,
   \frac{ { a_1^2 r_1+\rho  a_1  a_2 r_2}}{r_1}, 
   \qquad {\bf x} \in \mathbb{R}^2,
   \label{potential_wiener}
   \end{equation}
where $(r_1,r_2)\neq (0,0)$. Hence, the suitable non-zero values of ${\bf r}$ for the potential (\ref{potential_wiener}) satisfy the  condition (\ref{equazione_r}), which defines an ellipse, and also Eq.\ (\ref{cond_pot}) for $n=2$, i.e. 
   \begin{equation}
       {\rho  a_1  a_2 (r_1^2-r_2^2)+( a_2^2- a_1^2)
      \,r_1 r_2=0}.
      \label{iperbole_deg}
   \end{equation}
The latter defines a degenerate hyperbole, since Eq.\ (\ref{iperbole_deg}) becomes $r_1 r_2=0$ if $\rho=0$, and 
\begin{equation*}
\left(r_1-\gamma\,r_2 \right)\left[r_1-\left(\gamma-
\frac{1}{\rho a_1  a_2} \, 
\left(( a_2^2- a_1^2)^2+4 \rho^2 a_1^2  a_2^2\right)^{1/2}
\right)r_2 \right]=0 
\qquad 
\hbox{if}\; \rho \neq 0,
\end{equation*}
with $\gamma$ given in (\ref{ang_rot}).
\end{remark}
%
%
%
\begin{figure}[t]
    \centering
    \includegraphics[width=0.47\linewidth]{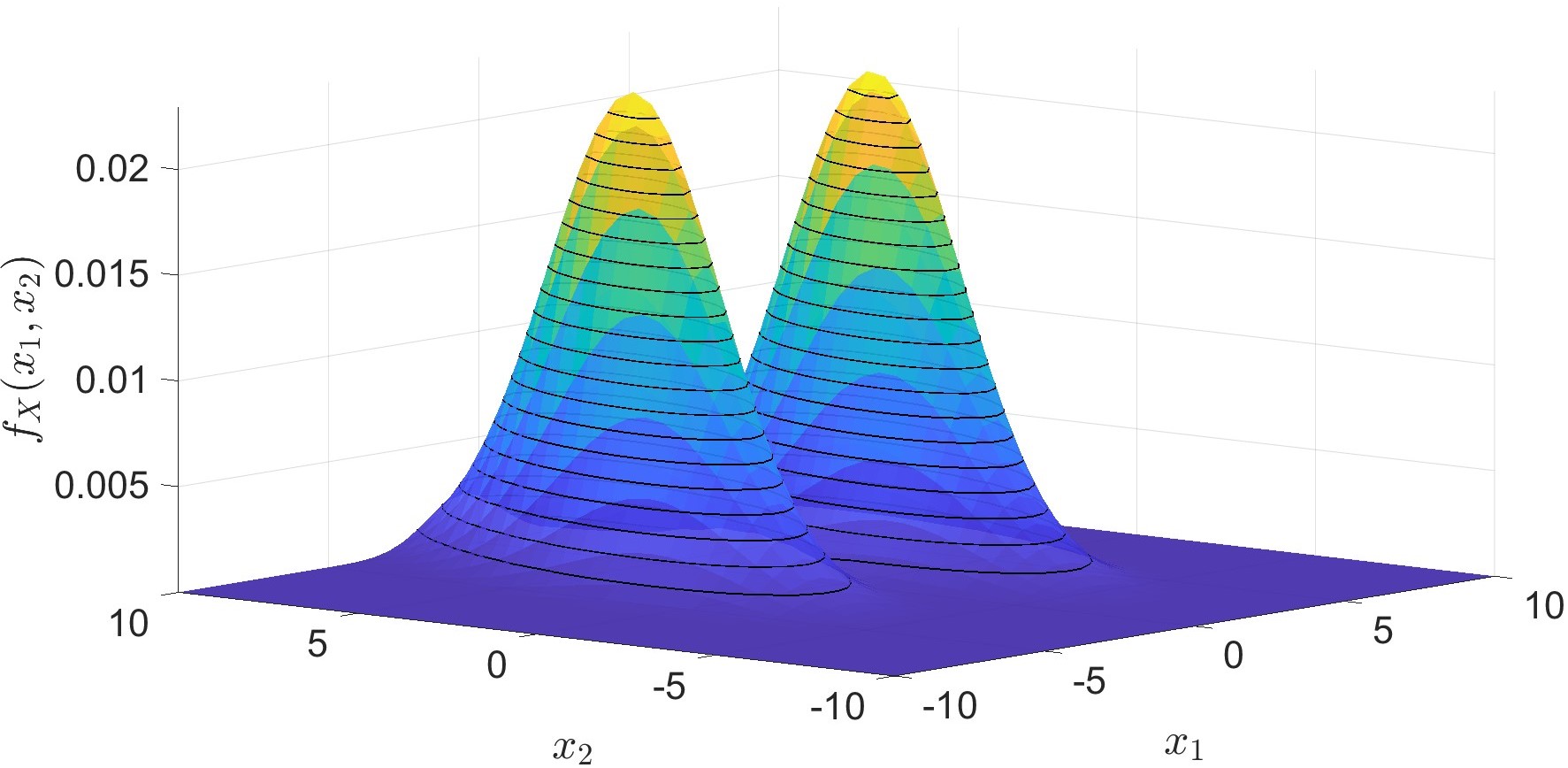}
    \includegraphics[width=0.47\linewidth]{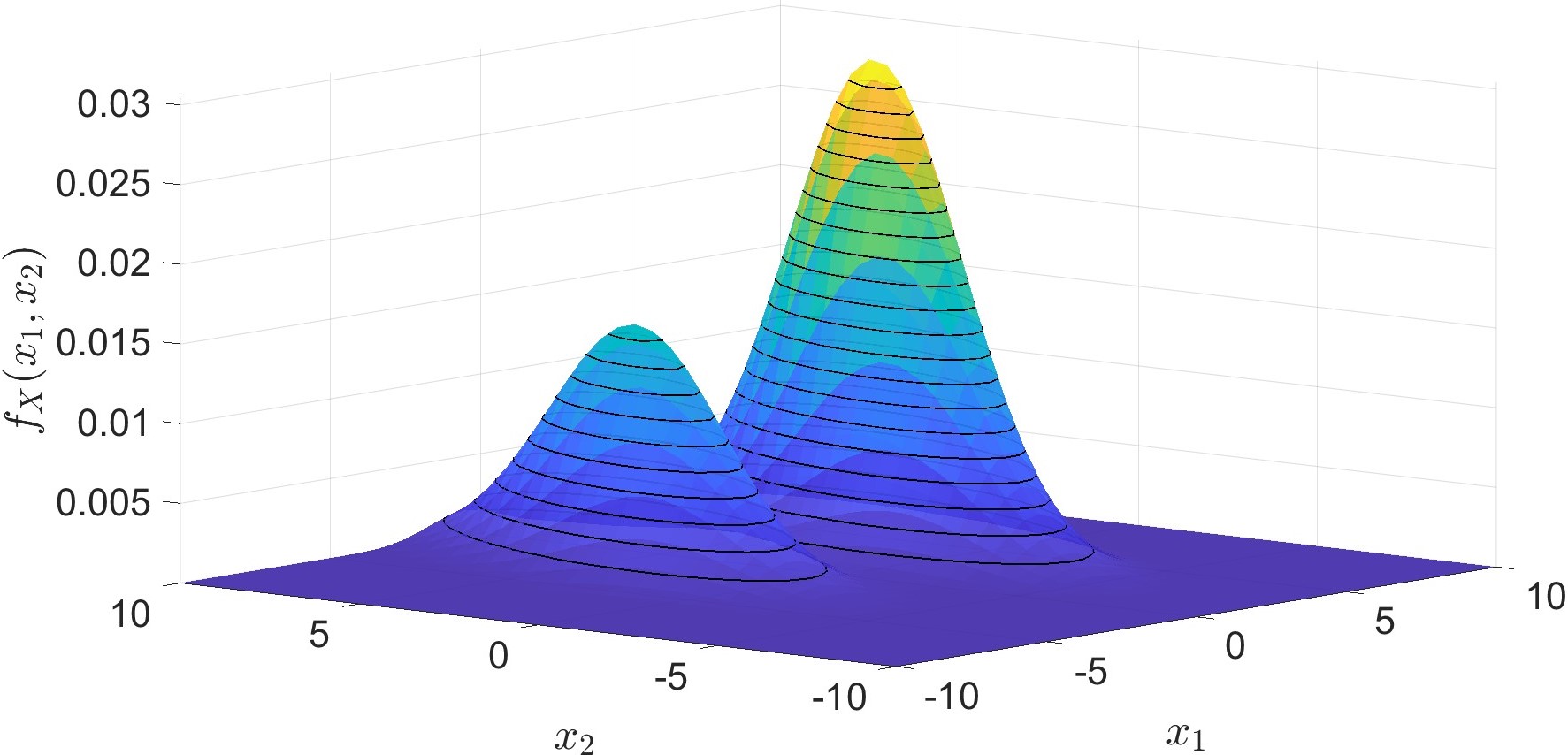}
    \includegraphics[width=0.47\linewidth]{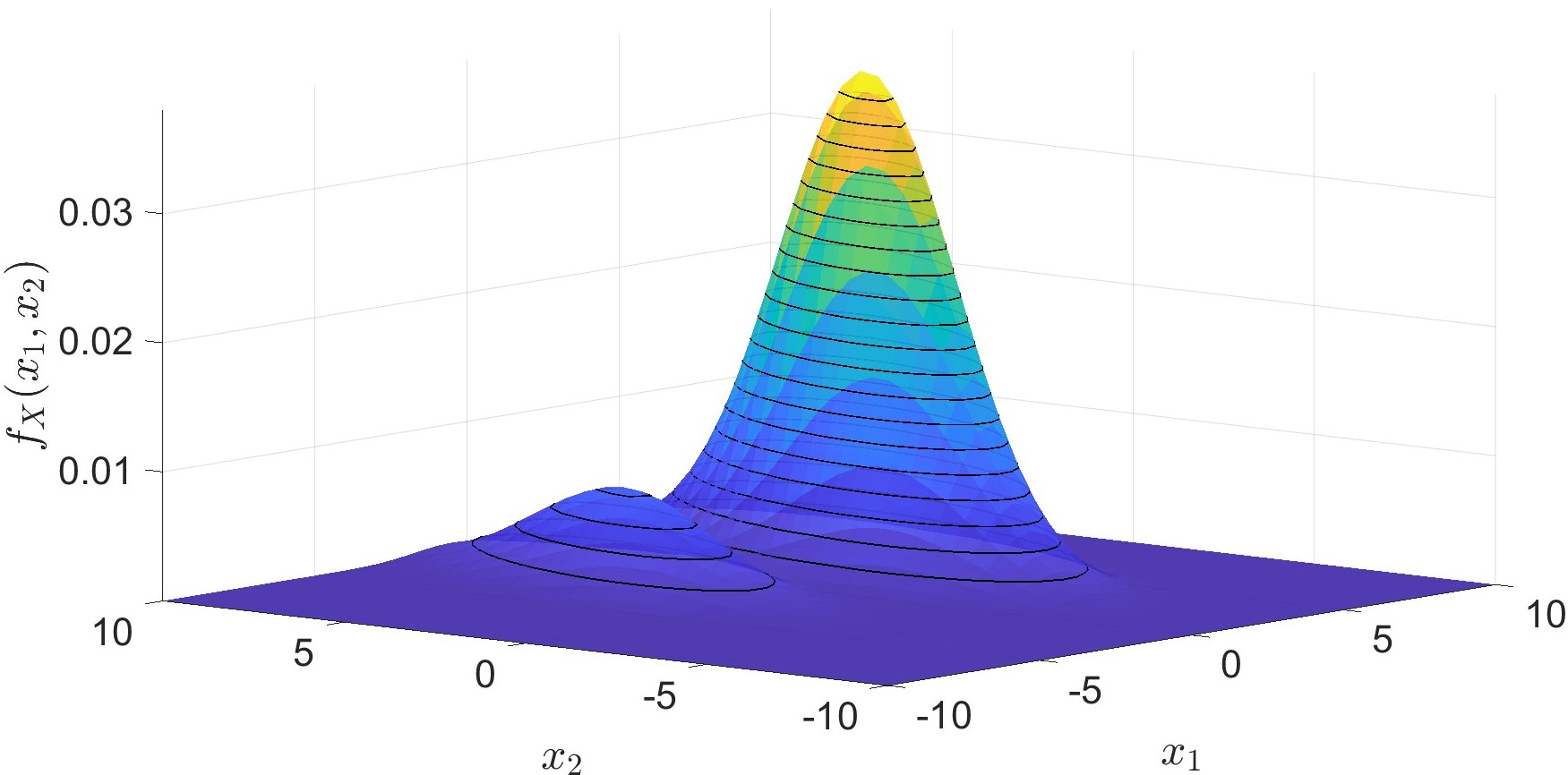}
    \includegraphics[width=0.47\linewidth]{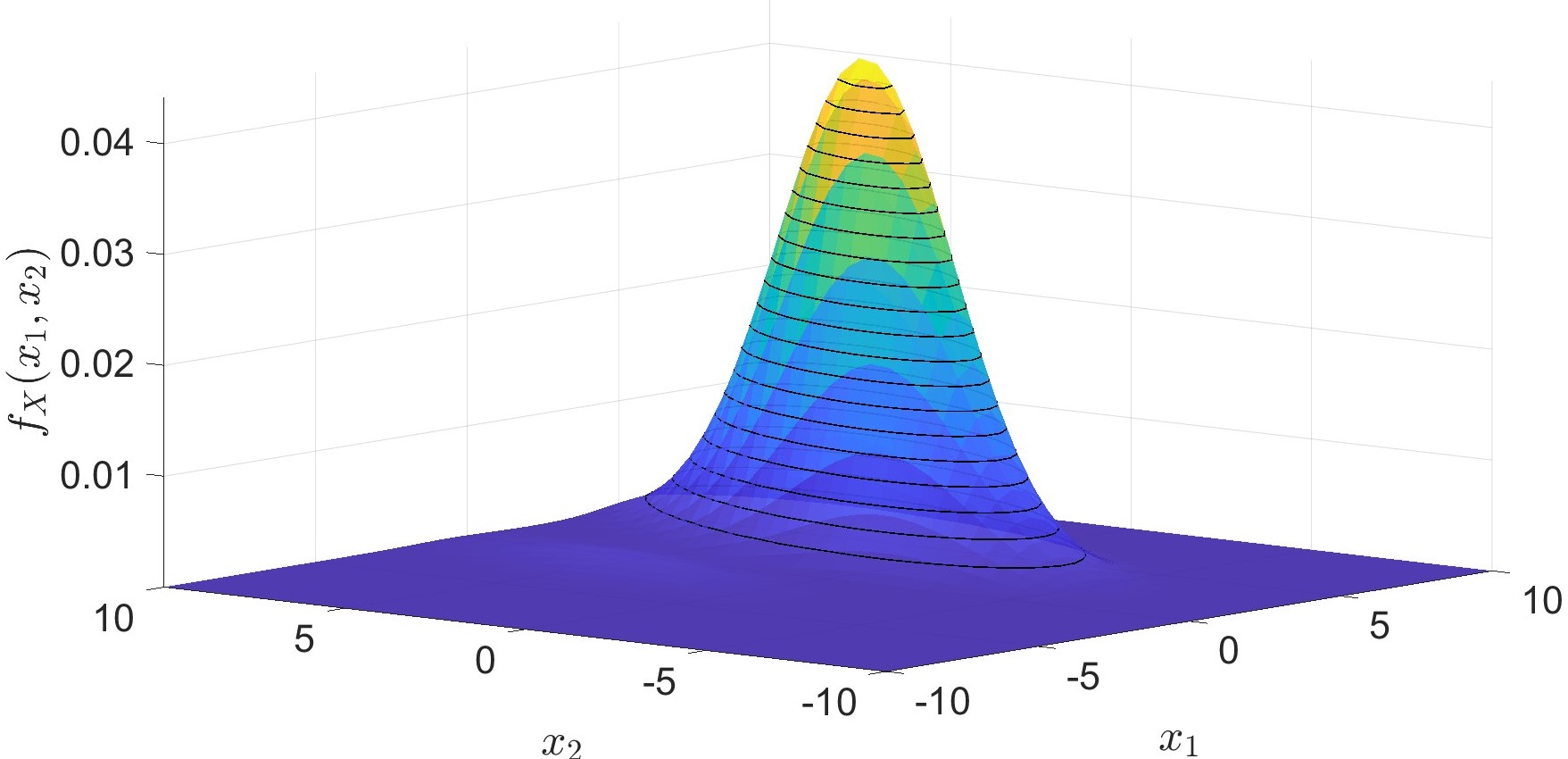}
    \caption{The transition p.d.f.\ of the transformed Wiener process ${\bf X}_t$, for $m_1=-1$, $m_2=2$, $a_1=1$, $a_2=2$, $\rho=0.5$,  ${\bf y}=(0,0)$, $\tau=0$, $t=2$, $(r_1,r_2)=(-2.1094,0.6386)$, with $c=1$ (upper left), $c=2$ (upper right), $c=5$ (lower left) and $c=30$ (lower right).}
    \label{Wiener_case1_dens}
\end{figure}
\begin{figure}[t]
    \centering
     \includegraphics[width=0.49\linewidth]{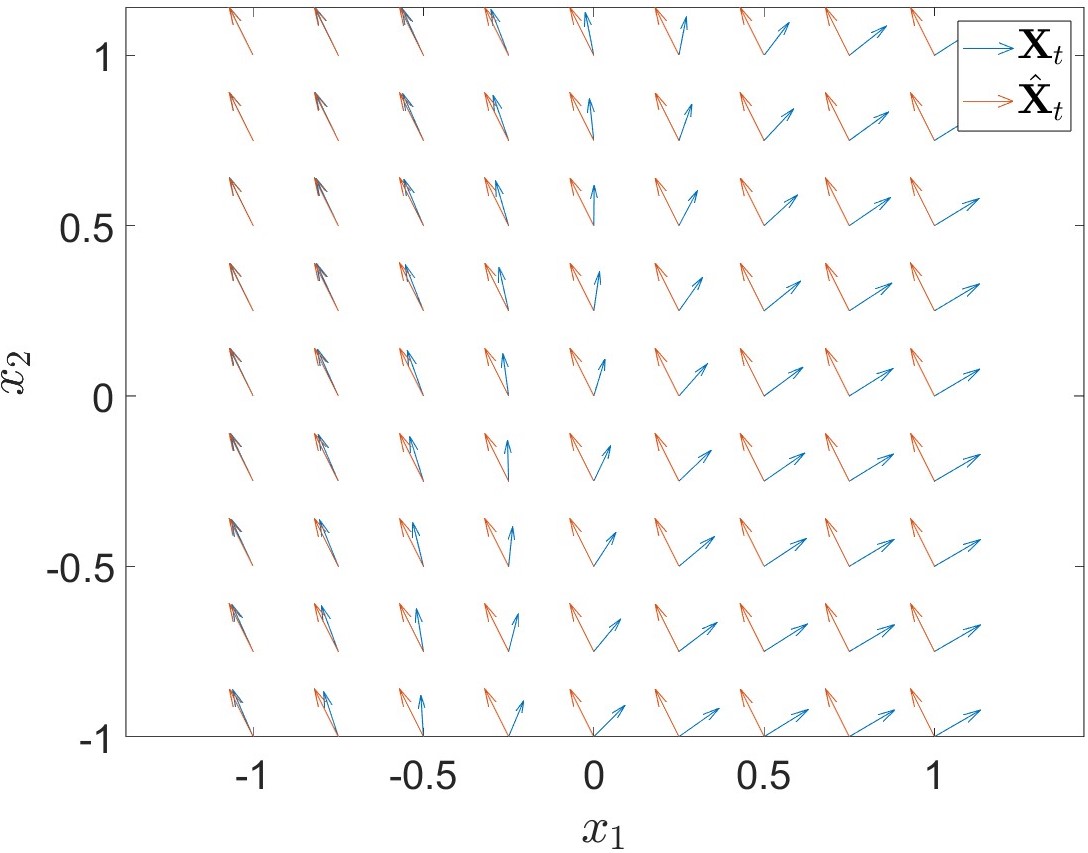}
     \hspace{0.1cm}
     \includegraphics[width=0.49\linewidth]{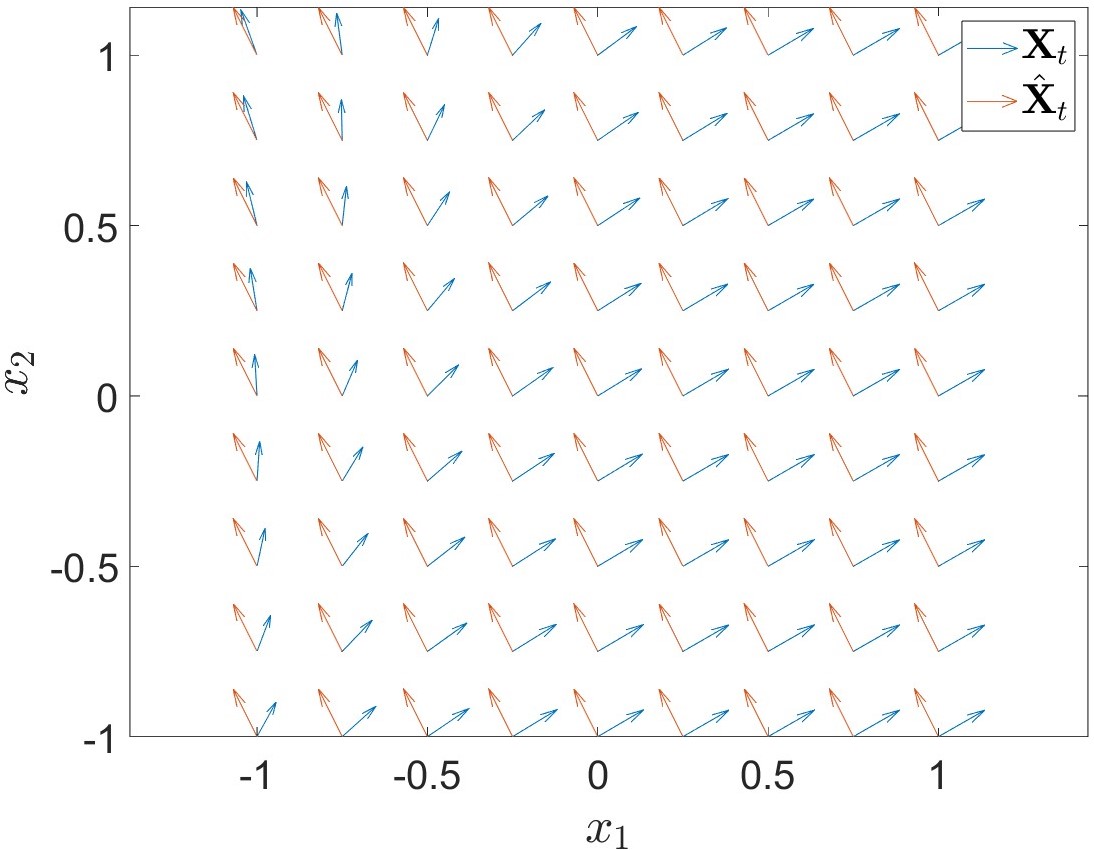}
    \includegraphics[width=0.5\linewidth]{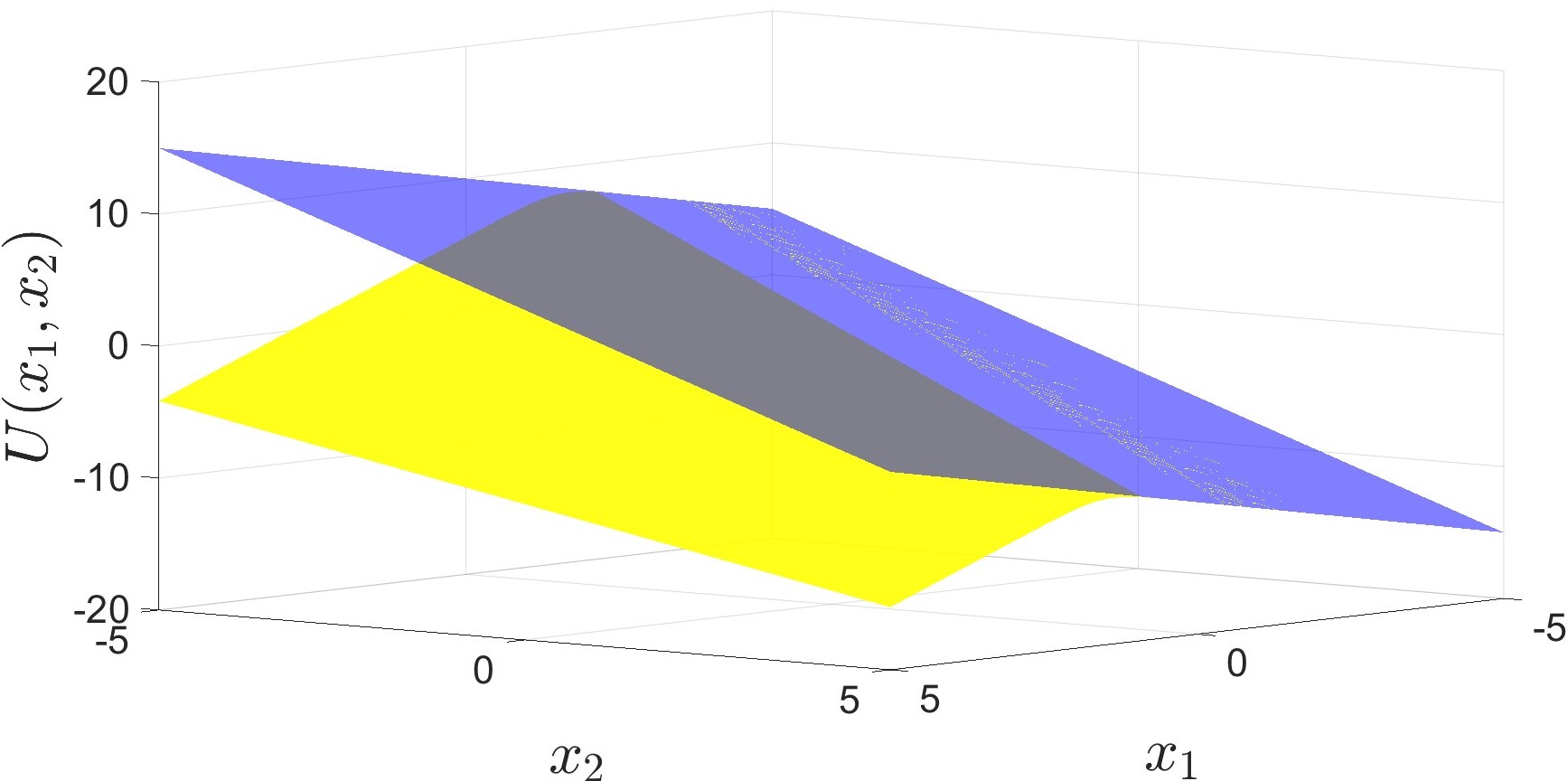}
     \hspace*{-0.4cm}
     \includegraphics[width=0.5\linewidth]{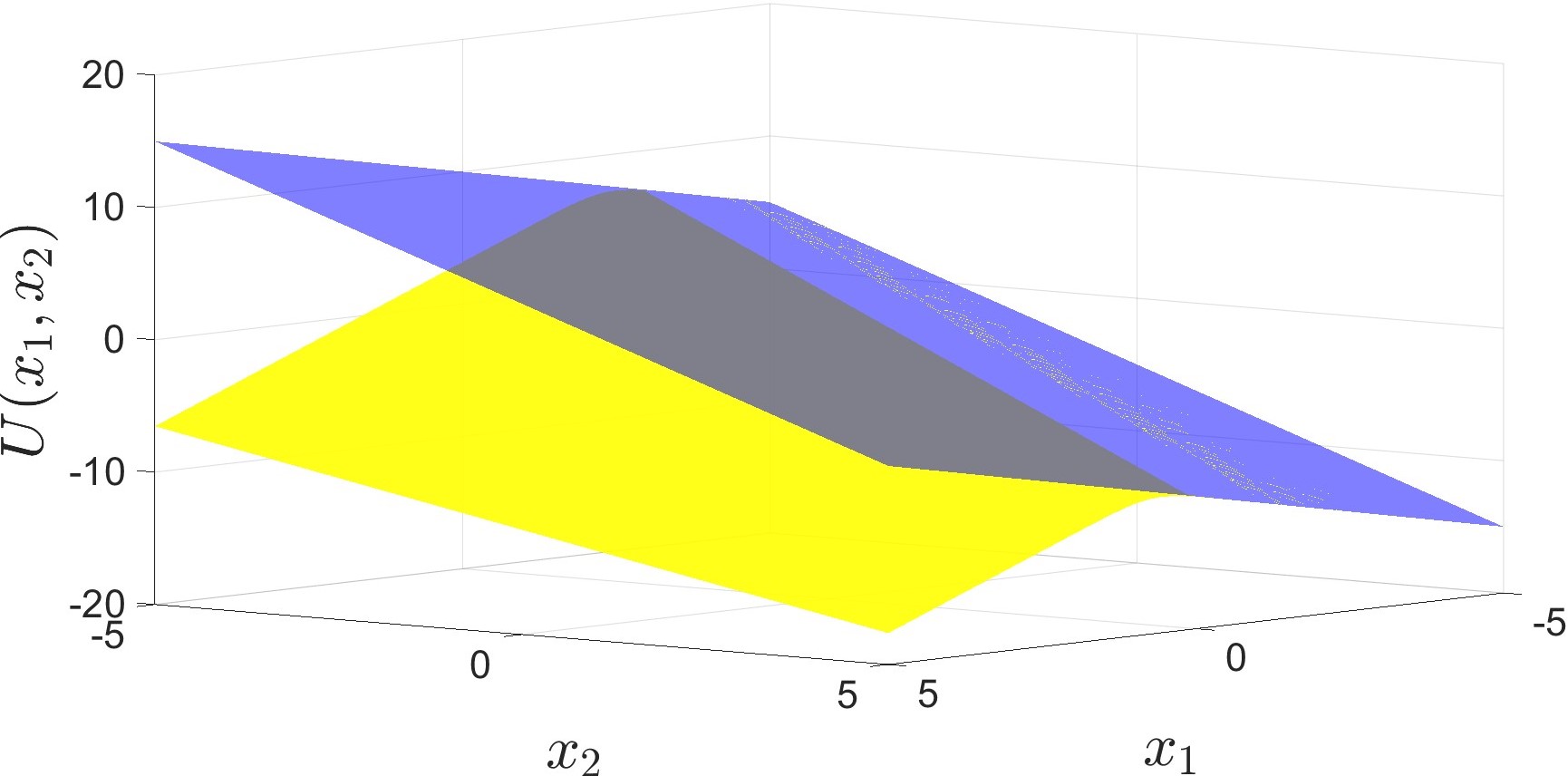}
  \caption{
  The drifts of  ${\bf \widehat{X}}_t$ and ${\bf X}_t$ 
  represented as vector field 
  (on top) and  the potential of ${\bf \widehat{X}}_t$ in blue and of ${\bf X}_t$ in yellow (bottom). The choices of the parameters are $\rho=0.5$,   $ a_1=1$, $ a_2=2$, 
  $m_1=-1$, $m_2=2$, $(r_1,r_2)=(-2.1094,0.6386)$ and $c=1$ (left) or $c=30$ (right).}
    \label{Wiener_case1}
\end{figure}
\par
Figure \ref{Wiener_case1_dens} shows four examples of the transition p.d.f.\ of the two-dimensional transformed Wiener process ${\bf X}_t$, having infinitesimal moments \eqref{eq:momXtWien}. 
We recall that such a density can be expressed as the 2-term mixture (\ref{rel_mixture}) with mixing parameter $\theta_c({\bf y})$ (see Eq.\ \eqref{h_teta_y} for $n=2$). 
In particular, in the first case of Figure \ref{Wiener_case1_dens} we have $\theta_c({\bf y})=1/2$, so $f({\bf x},t\,|\,{\bf y},\tau)$ is bimodal with equal peaks (cf.\ point (iii) of Remark \ref{remark_picchi}). The other cases, for different choices of $c$, confirm that the bimodality of $f$ vanishes when $c$ if sufficiently large. 
Moreover, Figure \ref{Wiener_case1} shows the field representation of the drift vectors of ${\bf X}_t$ for ${\bf x}\in [-1,1]^2$ (in the top), and the potential function $U({\bf x})$ for ${\bf x}\in [-5,5]^2$ and two parameters options (in the bottom). 
In particular, in the cases given in the first column of Figure \ref{Wiener_case1} we have $c=e^{2{\bf r} \cdot {\bf y}^\top}=1$, so $f$ is again bimodal with equal peaks. The plot of the directions of the drifts highlights that the transformation involving $h_c({\bf x})$ only significantly modifies the behavior of the process for some choices of ${\bf x}$. More specifically, the plot of the potential function of both the original and transformed process suggests that the variation largely affects the drift $\bf m$ within an half-plane, and less on the other side. Similar remarks hold for the right-hand side column of the figure. In this case, the choice $c=30$ leads to a significant variation of the drift vectors throughout the whole depicted domain. Due to the weak contribution of $c$ to $U({\bf x})$, the potential function is only slightly different from the $c=1$ case. 
Furthermore, the roof-shaped drift of ${\bf X}_t$, which is visible in both cases of Figure \ref{Wiener_case1}, further highlights another way to assess the bimodality of the density $f$. 
\par
Notice that in both Figures \ref{Wiener_case1_dens} and \ref{Wiener_case1} the choice of the vector $\bf r$ has been made to verify both Eqs.\ \eqref{equazione_r} and \eqref{iperbole_deg}, as required to give a complete description of the diffusion and the potential.
\begin{remark}\label{rem:Wabs2d}
In conclusion, we can apply Corollary \ref{cor_taboo_pdf} to the transformed process ${\bf X}_t$ having infinitesimal moments (\ref{eq:momXtWien}). In this case, the taboo transition p.d.f.\ of ${\bf X}_t$ in the presence of the absorbing boundary (\ref{C(t)}), for $S(t)=at+b$, $a,b\in \mathbb{R}$, can be expressed as 
\begin{equation}
 f^A({\bf x},t\,|\,{\bf y},\tau)
 =\frac{h_c({\bf x})}{h_c({\bf y})}\,
 \widehat{f}^A({\bf x},t\,|\,{\bf y},\tau),
 \label{eq:relfAwhfA}
\end{equation}
for all $t>\tau \geq 0$ and $({\bf x},{\bf y})\in D_1(t) \times D_1(\tau)$ or $({\bf x},{\bf y})\in D_2(t)\times D_2(\tau)$, 
where $h_c$ is given in (\ref{k_lin_proc}) for $\delta =0$, and 
the p.d.f.\ $\widehat{f}^A$ is specified in (\ref{eq:hatfAWOU}). 
\end{remark}
\subsection{Two-dimensional transformed Ornstein--Uhlenbeck process}\label{subs:tOU}
With reference to the two-dimensional diffusion process $\widehat{\bf X}_t$ having infinitesimal moments (\ref{inf_mom_lin_bid}), hereafter we focus on a special case where $\delta=1$, cf.\ (\ref{delta}). Specifically, we refer to the case (b) of Table \ref{table_cases} so we deal with the Ornstein--Uhlenbeck process with infinitesimal moments
\begin{equation*}
\widehat{{\bf b}}=\theta_{11}{\bf x}=(\theta_{11}x_1,\theta_{11}x_2), \hspace{0.5 cm}  \qquad 
A = \begin{bmatrix}
	a_1^2 & \rho  a_1  a_2 \\
	\rho  a_1  a_2 &  a_2^2
    \end{bmatrix},  
\end{equation*}
where $\theta_{11}>0$, $-1 < \rho<1$ and
$\Delta =|A|=a_1^2a_2^2(1-\rho^2)>0$. 
Hence, recalling Proposition \ref{prop_w_dim2}, in the present case the weight function is given by $h_c({\bf x})=1+c\,{\rm Erf}\bigl\{{\bf r}\cdot{\bf x}^\top\bigr\}$, 
where $c\in (-1,0)\cup (0,1)$. Similarly to the transformed Wiener case, in the following remark we note that the vectors $\textbf{r}=(r_1,r_2)\in \mathbb{R}^2$ verifying the condition (\ref{lambda}) are the coordinates of an ellipse.
\begin{remark}
The condition (\ref{lambda}), for $\delta=1$, is verified by any $\textbf{r}\in \mathbb{R}^2$ such that
\begin{equation}
 r_1^2  a_1^2+2r_1r_2\rho  a_1  a_2 + r_2^2  a_2^2 - \theta_{11}=0.
	\label{ellisse_OU}
\end{equation}
Moreover, recalling that the rotation angle of the ellipse in Eq.\ (\ref{ellisse_OU}) is the same expressed in Eq.\ \eqref{ang_rot}, the parametric form of the ellipse can be obtained from Eq.\ \eqref{parametriche_rhodivzero} by taking $(x_0,y_0)=\boldsymbol{0}$ and
%
%
$$
 (s_{1},s_2)=\begin{cases}
     \left(\left({\frac{ \theta_{11}}{\eta_2}}\right)^{1/2},\left({\frac{ \theta_{11}}{\eta_1}}\right)^{1/2}\right), & {\rm if }\; \rho\neq0 \\[10pt]
     \left(\frac{{ \theta_{11}^{1/2}}}{a_{1}},\frac{{ \theta_{11}^{1/2}}}{a_{2}}\right), & {\rm if }\;\rho=0,
 \end{cases},
$$
where $\eta_{1,2}$ are given in Eq.\ \eqref{eta12}. 
\end{remark}
%
\begin{remark}
Due to Eq.\ (\ref{drift_OU}), the process ${\bf X}_t$ obtained by transforming 
the OU process for $n=2$ has infinitesimal moments, 
for ${\bf x}\in \mathbb{R}^2$,
\begin{equation}
\begin{array}{ll}
b_{i}(\textbf{x})=\theta_{11} x_i
    +
    {\displaystyle\frac{c\  a_i \, e^{-({\bf r}\cdot{\bf x}^\top)^2}}{{\pi^{1/2}}
    \left(1+c\,{\rm Erf}[{\bf r}\cdot{\bf x}^\top]\right)}
    \,    (a_i r_i+\rho  a_j r_j)}, 
    & i,j=1,2,\; i\neq j,
 \\[4mm]
 a_{ij}(\textbf{x})=
	\left\{\begin{array}{ll}
		 a_{i}^2,  & i=j, \\
		\rho a_i a_j,  & i \neq j,
	\end{array}\right.
    & {}
\end{array}
\label{eq:mompOUtrans2d}
\end{equation}
with $c\in (-1,0)\cup (0,1)$ and ${\bf r} \in \mathbb{R}^2$ satisfying condition \eqref{ellisse_OU}. In addition, the potential of the transformed OU process ${\bf X}_t$ is:
   \begin{equation*}
   U({\bf x})=-\frac{\theta_{11}(x_1^2+x_2^2)}{2}-\log(h({\bf x}))\,\frac{{ a_1^2 r_1+\rho a_1  a_2 r_2}}{r_1}, 
   \qquad {\bf x} \in \mathbb{R}^2,
   \end{equation*}
provided that ${\bf r}\neq {\bf 0}$ satisfies also the condition (\ref{iperbole_deg}), due to Theorem \ref{theor_pot_OU}. 
\end{remark}
\begin{figure}
    \centering
    \includegraphics[width=0.47\linewidth]{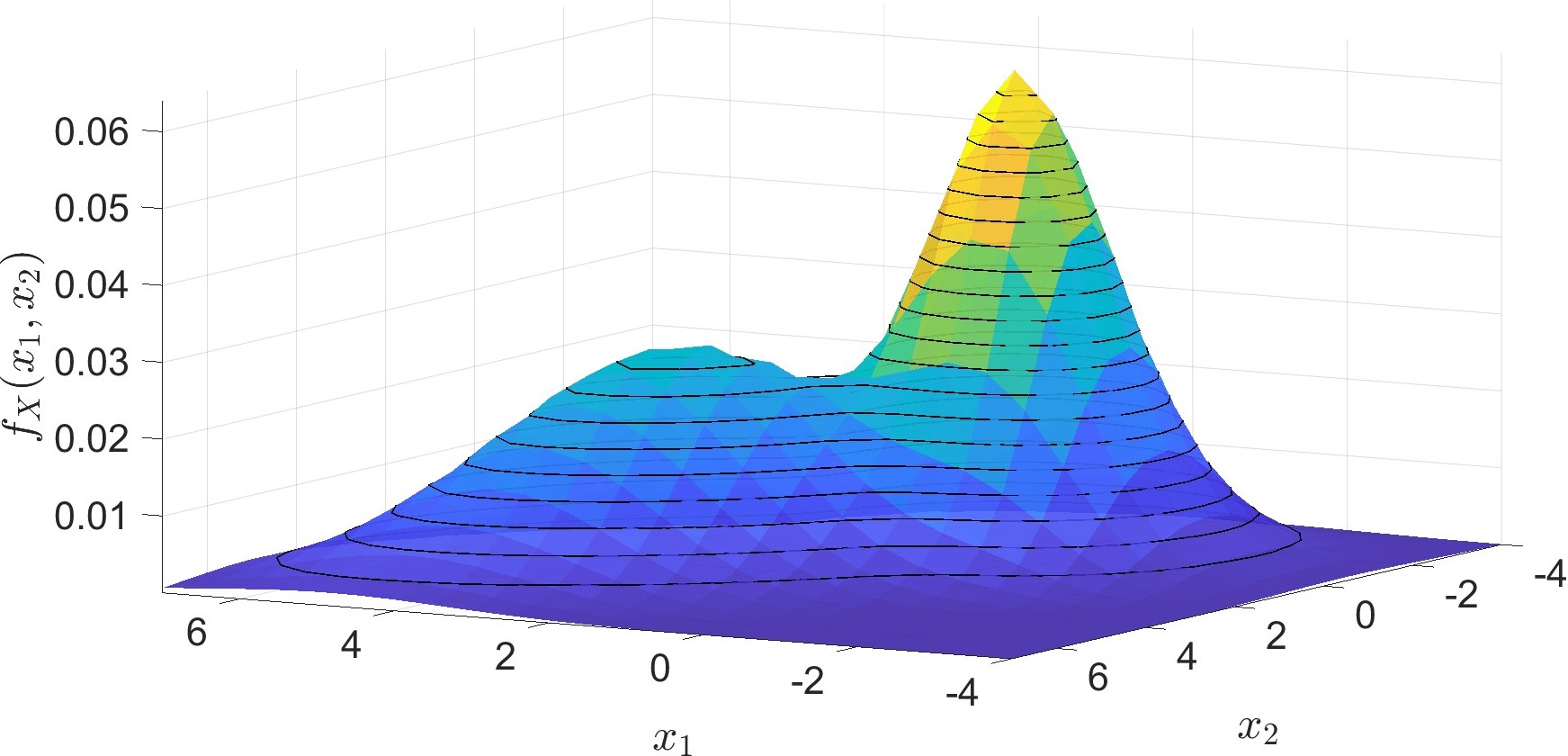}
    \hspace{-1cm} 
    \includegraphics[width=0.47\linewidth]{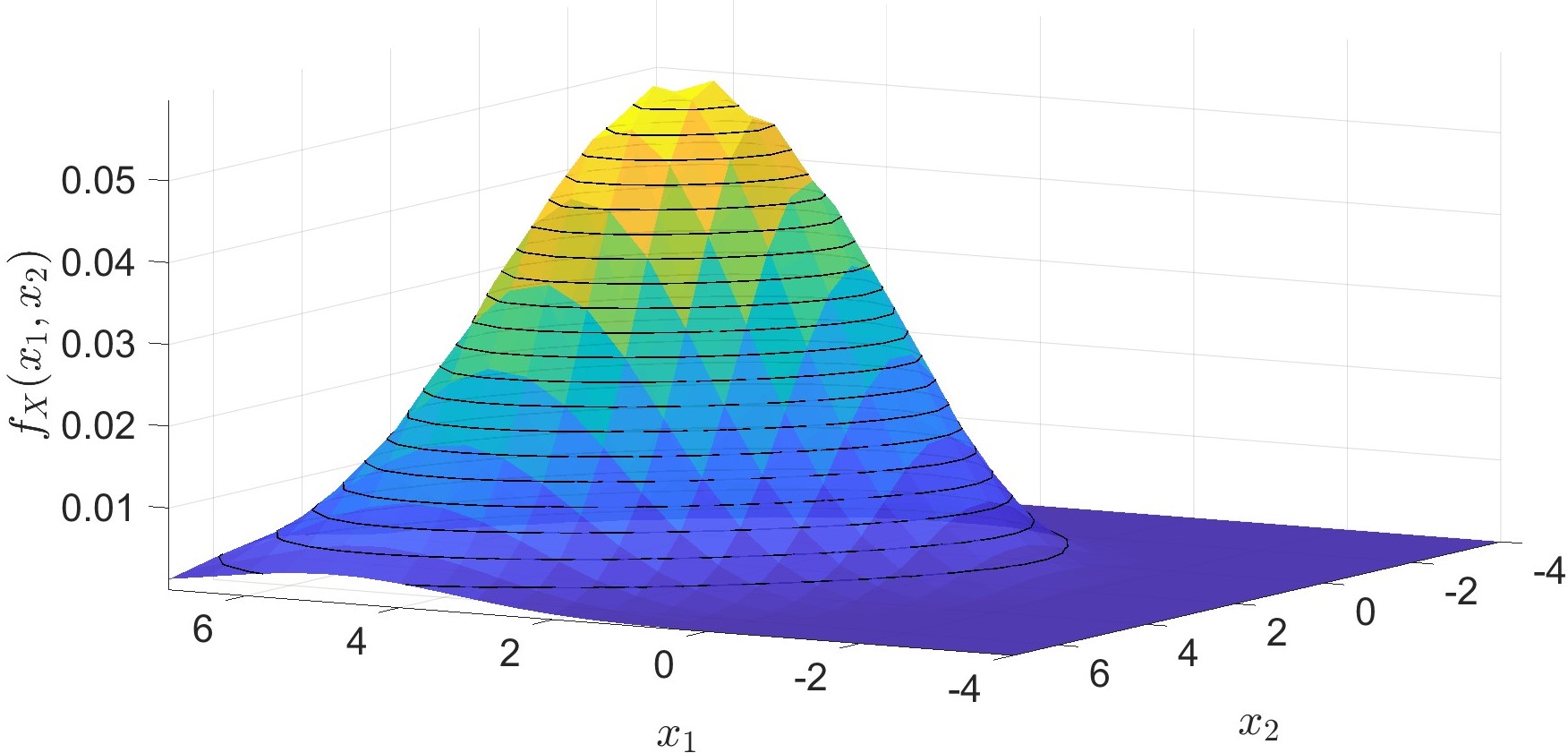}
    \caption{Plots of the density functions of the transformed OU process ${\bf X}_t$ for $n=2$. 
    The choices of the parameters are $\rho=0.5$, $ a_1= a_2=\sqrt{0.5}$, 
    $\theta_{11}=0.5$, $y=(1,1)$, $\tau=0$, $t=2$, $(r_1,r_2)=(0.577350,0.577350)$, with $c=-0.9$ (left) and $c=0.9$ (right).}
    \label{OU_dens}
\end{figure}
\begin{figure}[!h] 
    \centering
    \includegraphics[width=0.45\linewidth]{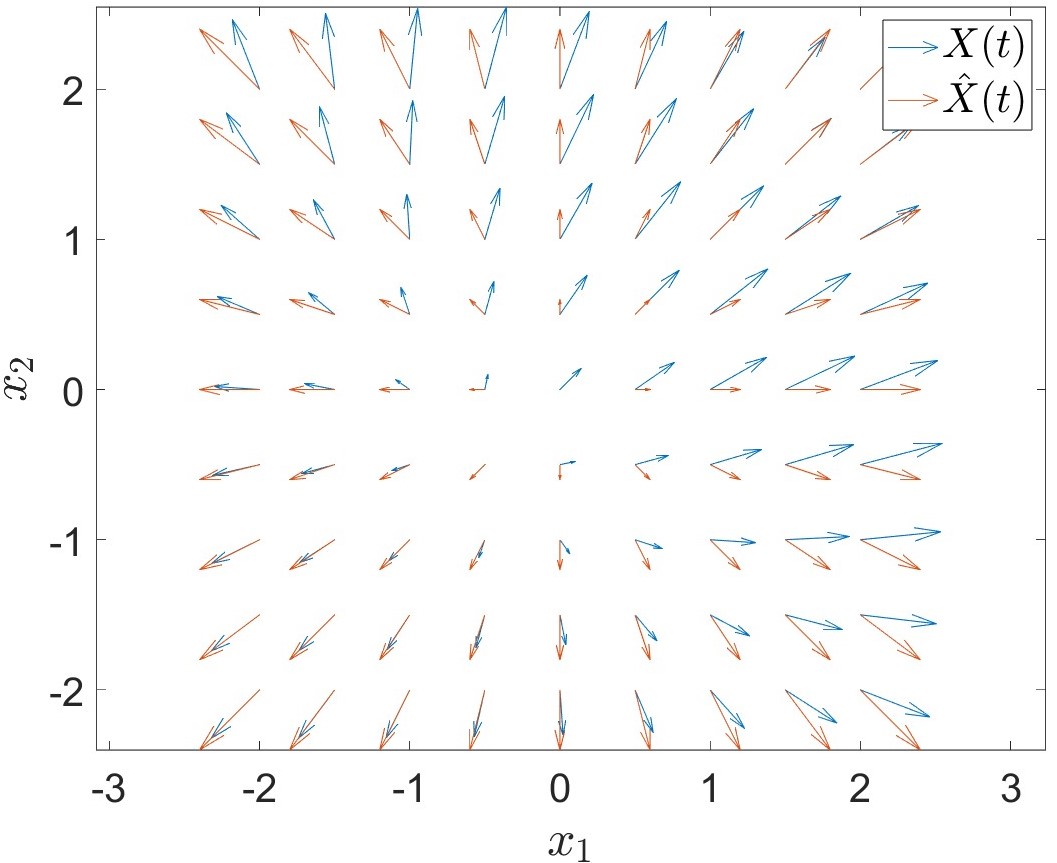}
    \includegraphics[width=0.45\linewidth]{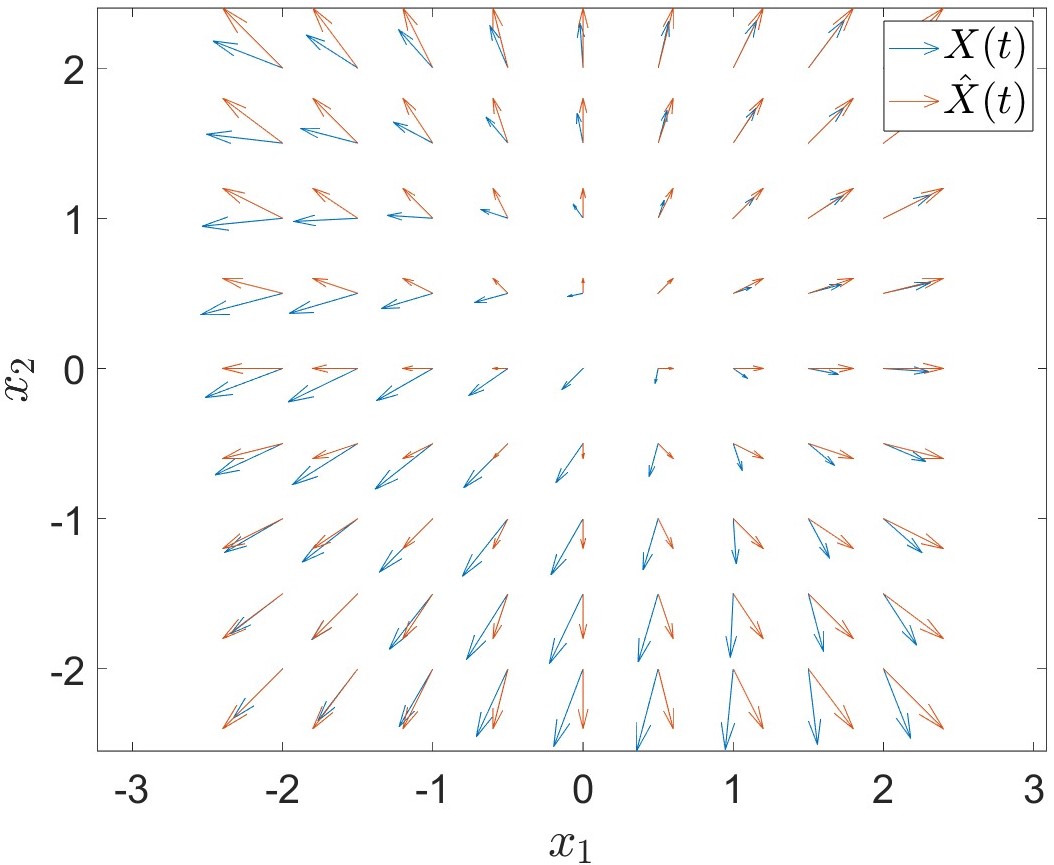}
       \includegraphics[width=0.5\linewidth]{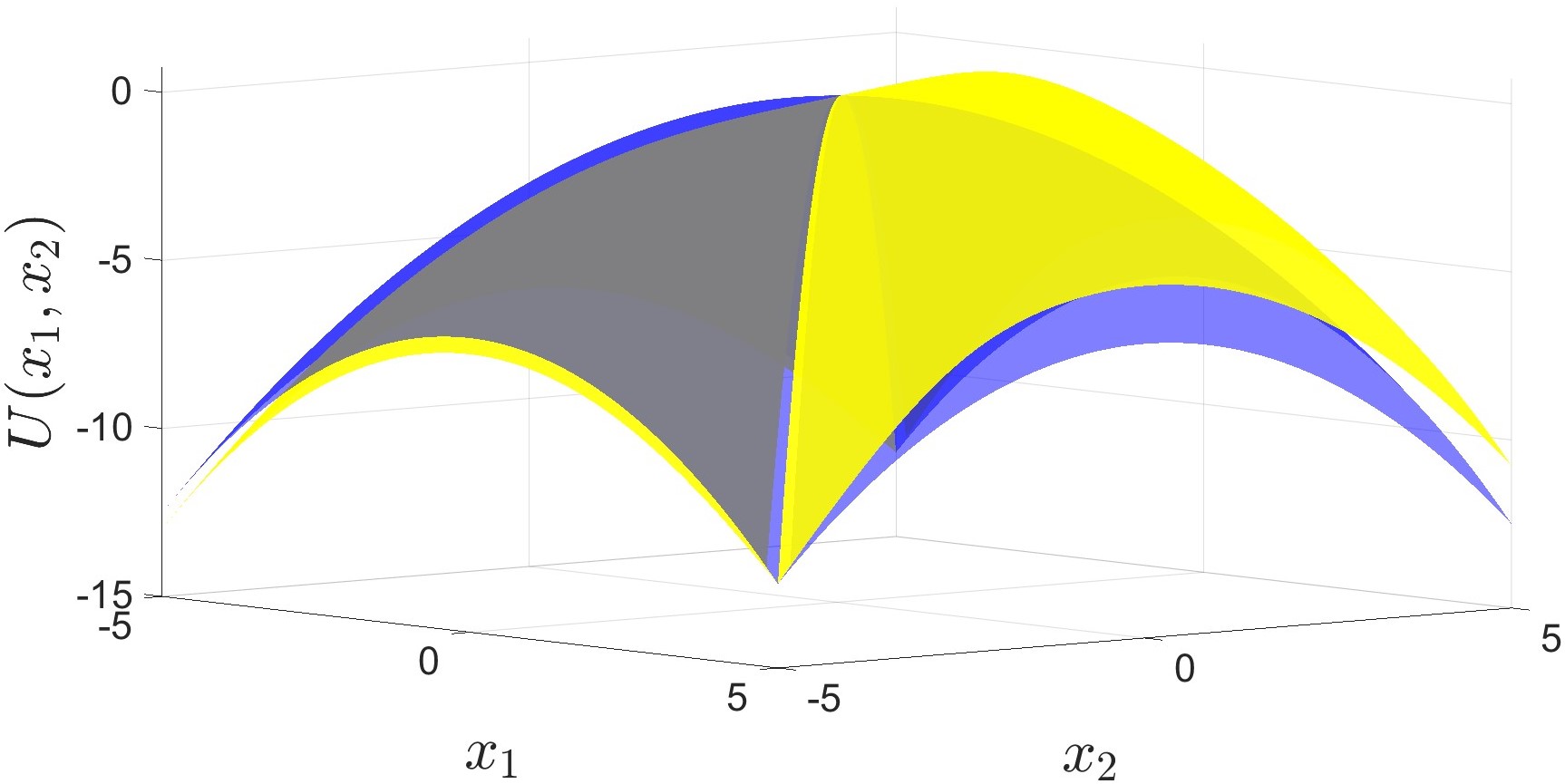}
    \hspace{-0.5cm}
    \includegraphics[width=0.5\linewidth]{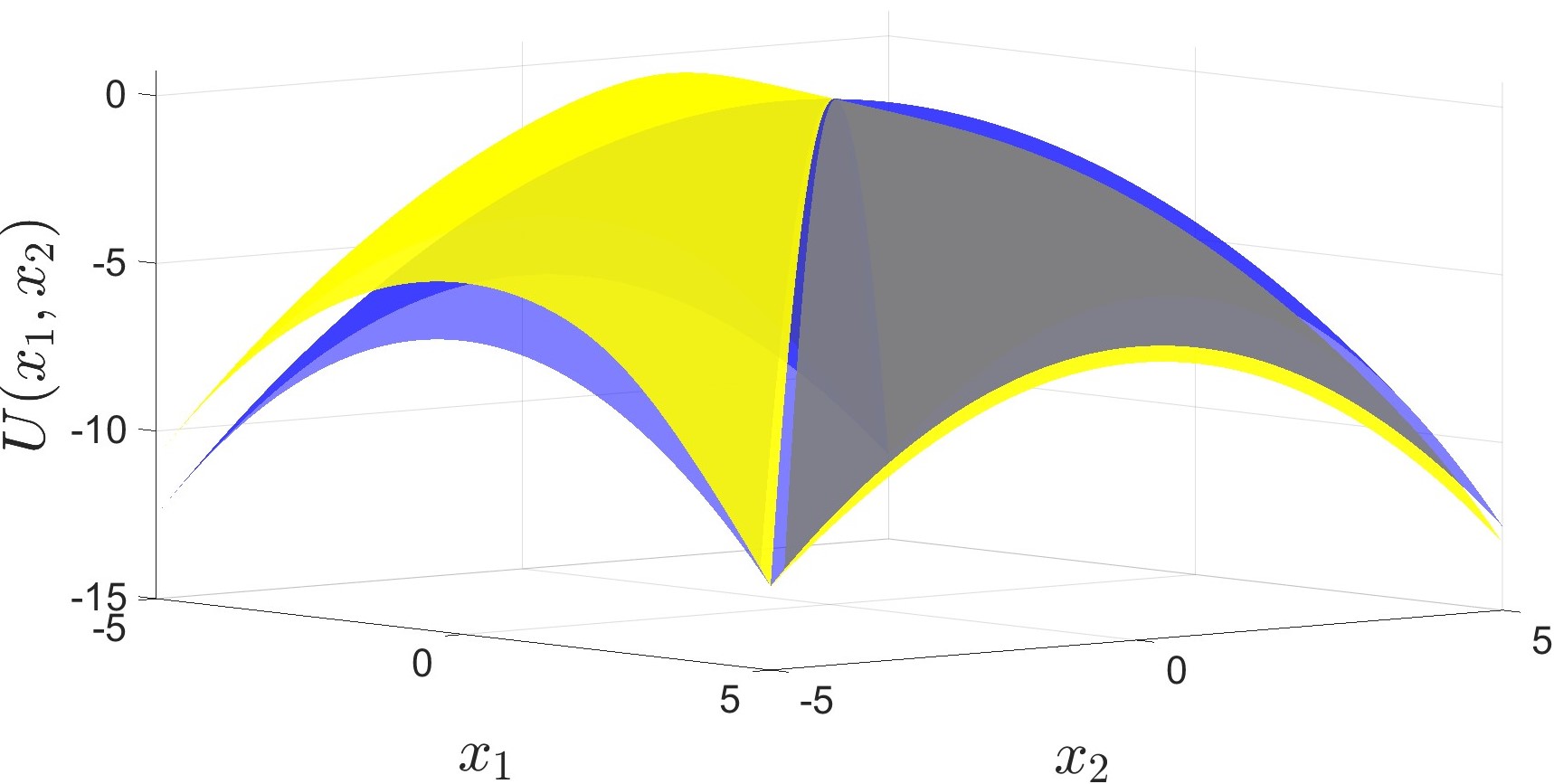}
    \caption{
The drifts of the OU process ${\bf X}_t$ and of the transformed process $\widehat{{\bf X}}_t$ represented as vector fields (on top), and the potentials of ${\bf X}_t$ in yellow and of  ${\bf \widehat{X}}_t$ in blue (bottom). The choices of the parameters are specified in Figure \ref{OU_dens}.}
    \label{OU_case1}
\end{figure}
\par
Two instances illustrating the behavior of the density function for the two-dimensional transformed OU process are considered in Figure \ref{OU_dens}. Specifically, in the first case (on the left), the choice of the parameters shows a bimodal density. Unlike the instance of the transformed Wiener process treated in (iii) of Remark \ref{remark_picchi}, in the transformed OU framework no choice of $c$ leads to  equal peaks, but different maxima values are exhibited by ${f}({\bf x},t\,|\,{\bf y},\tau)$. In the case displayed on the right of Figure \ref{OU_dens}, a different choice of parameter $c$ yields a different behavior. The same sets of parameters are used in Figure \ref{OU_case1}, which shows a comparison of the drift vectors and potential functions of the processes ${\bf X}_t$ and $\widehat{\bf X}_t$. The plot of the drift vectors shows that the transformation varies the intensity of the drift values, although only slightly modifying the direction on one half-plane. Conversely, the drift vectors change significantly in both intensity and direction on the other half-plane. Similarly to the Wiener case, the drift vectors are not drastically affected by the variation of $c$. However, the opposite values of $c$ in the two examples yield a symmetrical transformation of the potential function.
\par
\begin{remark} 
As for the case treated in Remark \ref{rem:Wabs2d}, 
Corollary \ref{cor_taboo_pdf} can be applied to the transformed process ${\bf X}_t$ having infinitesimal moments (\ref{eq:mompOUtrans2d}). 
Now, the taboo transition p.d.f.\ of ${\bf X}_t$ in the presence of the absorbing boundary (\ref{C(t)}), for 
$S(t)= \bar a \,e^{-\theta_{11} t}+\bar b\,e^{\theta_{11} t}$, $\bar a,\bar b\in \mathbb{R}$, 
can be expressed formally as in (\ref{eq:relfAwhfA}), 
where $h_c$ is given in (\ref{k_lin_proc}) for $\delta =1$, and 
the p.d.f.\ $\widehat{f}^A$ is specified in (\ref{eq:hatfAWOU}). 
\end{remark}
\par
We conclude this section noting that the analysis performed so far on the linear diffusion processes with infinitesimal moments of the form (\ref{inf_mom_lin_bid}) allows us also to deal with mixed types, as specified hereafter. 
\begin{remark}
Let us now consider a two-dimensional mixed process $\widehat{\bf X}_t$ with infinitesimal moments:
\begin{equation*}
\widehat{b}(\textbf{x})=(m,\;\eta\, x_2),
\qquad 
\widehat{ a}_{ij}(\textbf{x})=
		\begin{cases}
			 a_{i}^2,  & i=j, \\
			\rho a_i a_j,  & i \neq j,
		\end{cases}
\end{equation*}
where  $m \neq 0$, $\eta>0$, $a_{i} >0$ $(i=1,2)$ and $-1 < \rho<1$, i.e. a mixed diffusion process in which one of the components behaves like a Wiener process while the other one behaves like a OU process. It is not hard to see that a solution of Eq.\ (\ref{2.5}) for such a process is
\begin{equation} \label{kmista}
    h({\bf x})=1+c_1\, {\rm exp} \left\{-\frac{2m}{ a_1^2}\ x_1\right\}
    +c_2 \,{\rm Erf}\left(\frac{{\eta^{1/2}}}{ a_2}\,x_2\right), 
    \qquad {\bf x}\in \mathbb{R}^2,
\end{equation}
with $c_1,c_2>0$.
Making use of (\ref{kmista}), from (\ref{2.5bc}) one then obtains 
the infinitesimal moments of the new process ${\bf X}_t$: 
\begin{eqnarray*}
	&&b_1({\bf x})=\frac{m\left[1-c_1\, {\rm exp}\left\{-\frac{2m}{ { a_1^2}}\,x_1\right\}
	+c_2 \,{\rm Erf}\left(\frac{{\eta^{1/2}}}{{ a_2}}\,x_2\right) \right]
    +2\rho { a_1} c_2 \left({\displaystyle \frac{\eta}{\pi}}\right)^{1/2}\, 
    {\rm exp}\left\{ -\frac{\eta x_2^2}{{ a_2^2} }\right\} }{1+c_1\, {\rm exp}\left\{-\frac{2m}{a_1^2}\,x_1\right\}
    +c_2 \,{\rm Erf}\left(\frac{{\eta^{1/2}}}{a_2}\,x_2\right)},  
    \nonumber\\[10pt]
    &&b_2({\bf x})=\eta x_2
    +2\frac{c_2 { a_2}\, \left({\displaystyle \frac{\eta}{\pi}}\right)^{1/2}\,
    {\rm exp}\left\{ -\frac{\eta x_2^2}{ {a_2^2}}\right\}
    -2{\displaystyle \frac{m\ \rho \ c_1\ { a_2}}{ {a_1}}}  {\rm exp}\left\{-\frac{2m}{{ a_1^2}}\,x_1\right\}}
    {1+c_1\, {\rm exp}\left\{-\frac{2m}{{ a_1^2}}\,x_1\right\}
    +c_2 \,{\rm Erf}\left(\frac{{\eta^{1/2}}}{{ a_2}}\,x_2\right)},  \nonumber
    \\[10pt]
	&&
	{ a_{ij}({\bf x})=\widehat a_{ij}({\bf x})}
    \qquad \qquad (i,j=1,2).
    \nonumber
\end{eqnarray*}
In this case the analysis can be performed by  the same reasoning adopted in Sections \ref{wiener_bidim} and \ref{subs:tOU}. 
\end{remark}

\section{Concluding remarks and perspectives}\label{sect:6}
We have developed a class of exactly solvable multidimensional diffusion models generated through Doob $h$-transforms. 
The proposed construction provides a flexible mechanism for producing transformed diffusions whose transition densities satisfy a product-form relation involving a suitable weight function. 
These explicit relations allow the analysis of several properties of the transformed dynamics, including stochastic ordering, diffusion in potential fields, and Poissonian resetting.
\par
The comparison based on the usual stochastic order, developed in Section \ref{sect:usord}, provides a useful characterization of the qualitative differences between the original and transformed diffusions. 
In particular, the transformed Wiener process admits a representation as a mixture of two Wiener processes, resulting in transition densities with at most bimodal behavior. 
Such multimodal diffusion models may be relevant in applications involving hidden random drifts; for instance, mixtures of Wiener processes have been considered for detecting random drift effects (see Johnson et al.\ \cite{Johnson_etal}).
\par
The spatial transformation introduced in this work can also be interpreted as a transport mechanism for probability mass among different regions of the $n$-dimensional state space. 
This observation suggests possible connections with maximal couplings and optimal transport techniques, including the derivation of bounds for Wasserstein distances between the probability measures of the original and transformed processes. 
Related ideas involving bimodal densities in optimal transport have been discussed, for example, by Lavenant et al.\ \cite{Lavenant}.
\par
In addition to transformations based on the Wiener and OU processes, 
further extensions can be developed toward other relevant diffusion models, such as the lognormal diffusion process, extending the case studied in Section 5.1 of Ricciardi et al.\ \cite{Ricciardi_etal99}). 
Moreover, the proposed framework may provide a basis for constructing denoising Markov models, in analogy with approaches based on Doob $h$-transforms (see Ren et al.\ \cite{Ren}). 
\par
Proposition \ref{propWienerreset} provides the stationary density of the transformed Wiener process under Poissonian resetting. 
However, due to the lack of analogous analytical results, obtaining the stationary distribution of the transformed OU process with resetting remains an open problem. 
\par
Finally, Eq.\ (\ref{2.1}) suggests various additional research lines through the  time-independent ratio of transition densities 
\begin{equation}
\frac{f({\bf x},t\,|\,{\bf y},\tau)}{\widehat{f}({\bf x},t\,|\,{\bf y},\tau)}
={h({\bf x})\over h({\bf y})}. 
\label{eq:rappdensf}
\end{equation}
%
{\em (i) Divergence measures.} 
Eq.\ (\ref{eq:rappdensf}) can be used to derive explicit expressions for measures of relative information between processes $\widehat{\bf X}_{t}$ and ${\bf X}_{t}$, since various divergences or distances can be expressed in terms of the above ratio, such as the Kullback--Leibler divergence, Jeffreys--Kullback--Leibler distance, Cressie--Read power divergence (see, for instance, Nielsen and Okamura \cite{NielsenOkamura} and Vuong et al.\ \cite{Vuong}). 
\par
{\em (ii) Likelihood ratio test.} 
The representation in Eq.\ (\ref{eq:rappdensf}) also provides a natural framework for 
likelihood-based inference, since likelihood ratios as $f/\widehat{f}$ can be expressed directly in terms 
of the weight function. 
\par
{\em (iii) Rejection sampling method.} 
Eq.\ \eqref{eq:rappdensf} is potentially useful for rejection sampling algorithms (for instance, see Maatouk and Bay \cite{Maatouk} and references therein), where the original diffusion ${\bf \widehat{X}}_t$ may serve as an efficient proposal distribution for simulating the transformed process ${\bf X}_t$.
\section*{Data Availability}
 All data generated or analysed during this study are included in this article.

\section*{Declaration of competing interest }

All authors contributed equally to this paper. 
The authors declare that they have no known competing financial interests or personal relationships that could have appeared to influence the work reported in this paper.

\section*{Acknowledgments}
The authors are members of the INdAM (Istituto Nazionale di Alta Matematica) research group GNCS,   
and acknowledge support received from GNCS project ``Metodi analitici e numerici per la modellizzazione di processi stocastici complessi'' (CUP E53C25002010001).

%

%
\end{document}